\newcommand{\klockan}{\the\hours:{\ifnum\minutes<10 0\fi}\the\minutes}
\newcommand{\tid}{\today\ \klockan}
\newcommand{\prtid}{\smash{\raise 10mm \hbox{\LaTeX ed \tid}}}
\renewcommand{\prtid}{}
\def\vint{\mathop{\mathchoice%
          {\setbox0\hbox{$\displaystyle\intop$}\kern 0.22\wd0%
           \vcenter{\hrule width 0.6\wd0}\kern -0.82\wd0}%
          {\setbox0\hbox{$\textstyle\intop$}\kern 0.2\wd0%
           \vcenter{\hrule width 0.6\wd0}\kern -0.8\wd0}%
          {\setbox0\hbox{$\scriptstyle\intop$}\kern 0.2\wd0%
           \vcenter{\hrule width 0.6\wd0}\kern -0.8\wd0}%
          {\setbox0\hbox{$\scriptscriptstyle\intop$}\kern 0.2\wd0%
           \vcenter{\hrule width 0.6\wd0}\kern -0.8\wd0}}%
          \mathopen{}\int}
\def\sectionmark#1{} 
\def\subsectionmark#1{}
\newcommand{\sectnr}{\ifnum \c@secnumdepth >\z@
                 \thesection.\hskip 1em\relax \fi}
\def\@evenhead{\footnotesize\rm\thepage\hfil\leftmark\hfil\llap{\prtid}}
\def\@oddhead{\footnotesize\rm\rlap{\prtid}\hfil\rightmark\hfil\thepage}
\def\tableofcontents{\section*{Contents} 
 \@starttoc{toc}}
\def\@biblabel#1{#1.}
\let\Thebibliography=\thebibliography
\renewcommand{\thebibliography}[1]{\def\@mkboth##1##2{}\Thebibliography{#1}
\addcontentsline{toc}{section}{References}
\frenchspacing 
\setlength{\@topsep}{0pt}
\setlength{\itemsep}{0pt}%
\setlength{\parskip}{0pt plus 2pt}%
}
\def\mdots@{\mathinner.\nonscript\!.%
 \ifx\next,.\else\ifx\next;.\else\ifx\next..\else
 \nonscript\!\mathinner.\fi\fi\fi}
\let\ldots\mdots@
\let\cdots\mdots@
\let\dotso\mdots@
\let\dotsb\mdots@
\let\dotsm\mdots@
\let\dotsc\mdots@
\def\vdots{\vbox{\baselineskip2.8\p@ \lineskiplimit\z@
    \kern6\p@\hbox{.}\hbox{.}\hbox{.}\kern3\p@}}
\def\ddots{\mathinner{\mkern1mu\raise8.6\p@\vbox{\kern7\p@\hbox{.}}%
    \raise5.8\p@\hbox{.}\raise3\p@\hbox{.}\mkern1mu}}
\def\@seccntformat#1{\csname the#1\endcsname.\quad}
\long\def\@makecaption#1#2{%
  \vskip\abovecaptionskip
  \sbox\@tempboxa{ #1. #2}%
  \ifdim \wd\@tempboxa >\hsize
    #1. #2\par
  \else
    \global \@minipagefalse
    \hb@xt@\hsize{\hfil\box\@tempboxa\hfil}%
  \fi
  \vskip\belowcaptionskip}
\renewcommand\section{\@startsection {section}{1}{\z@}%
                                   {-3.5ex \@plus -1ex \@minus -.2ex}%
                                   {2.3ex \@plus.2ex}%
                                   {\normalfont\Large\bfseries\boldmath}}
\renewcommand\subsection{\@startsection{subsection}{2}{\z@}%
                                     {-3.25ex\@plus -1ex \@minus -.2ex}%
                                     {1.5ex \@plus .2ex}%
                                     {\normalfont\large\bfseries\boldmath}}
\renewcommand\subsubsection{\@startsection{subsubsection}{3}{\z@}%
                                     {-3.25ex\@plus -1ex \@minus -.2ex}%
                                     {1.5ex \@plus .2ex}%
                                     {\normalfont\normalsize\bfseries\boldmath}}
\renewcommand\paragraph{\@startsection{paragraph}{4}{\z@}%
                                    {3.25ex \@plus1ex \@minus.2ex}%
                                    {-1em}%
                                    {\normalfont\normalsize\bfseries\boldmath}}
\renewcommand\subparagraph{\@startsection{subparagraph}{5}{\parindent}%
                                       {3.25ex \@plus1ex \@minus .2ex}%
                                       {-1em}%
                                      {\normalfont\normalsize\bfseries\boldmath}}
\newcommand{\authortitle}[2]{\author{#1}\title{#2}\markboth{#1}{#2}}
\newcommand{\auth}[2]{{#1, #2.}}
\newcommand{\art}[6]{{\sc #1, \rm #2, \it #3 \bf #4 \rm (#5), \mbox{#6}.}}
\newcommand{\artin}[3]{{\sc #1, \rm #2. In #3.}}
\newcommand{\book}[3]{{\sc #1, \it #2, \rm #3.}}
\newcommand{\AND}{{\rm and }}
\newtheoremstyle{descriptive}%
  {\topsep}   
  {\topsep}   
  {\rmfamily} 
  {}          
  {\bfseries} 
  {.}         
  { }         
  {}          
\newtheoremstyle{propositional}%
  {\topsep}   
  {\topsep}   
  {\itshape}  
  {}          
  {\bfseries} 
  {.}         
  { }         
  {}          
\newtheoremstyle{remarkstyle}%
  {\topsep}   
  {\topsep}   
  {\rmfamily}  
  {}          
  {\itshape} 
  {.}         
  { }         
  {}          
\theoremstyle{propositional}
\newtheorem{thm}{Theorem}[section]
\newtheorem{lem}[thm]{Lemma}
\newtheorem{prop}[thm]{Proposition}
\newtheorem{cor}[thm]{Corollary}
\theoremstyle{descriptive}
\newtheorem{defi}[thm]{Definition}
\newtheorem{remark}[thm]{Remark}
\newtheorem{example}[thm]{Example}
\renewenvironment{proof}[1][\proofname]{\par
  \pushQED{\qed}%
  \normalfont
  \trivlist
  \item[\hskip\labelsep
        \itshape
    #1\@addpunct{.}]\ignorespaces
}{%
  \popQED\endtrivlist\@endpefalse
}
\newcommand{\setm}{\setminus}
\gdef\eeaa#1pt{#1}}      
\def\accentadjtext#1{\setbox0\hbox{$#1$}\kern   
                \expandafter\eeaa\the\fontdimen1\textfont1 \ht0 }
\def\accentadjscript#1{\setbox0\hbox{$#1$}\kern 
                \expandafter\eeaa\the\fontdimen1\scriptfont1 \ht0 }
\def\accentadjscriptscript#1{\setbox0\hbox{$#1$}\kern   
                \expandafter\eeaa\the\fontdimen1\scriptscriptfont1 \ht0 }
\def\accentadjtextback#1{\setbox0\hbox{$#1$}\kern       
                -\expandafter\eeaa\the\fontdimen1\textfont1 \ht0 }
\def\accentadjscriptback#1{\setbox0\hbox{$#1$}\kern     
                -\expandafter\eeaa\the\fontdimen1\scriptfont1 \ht0 }
\def\accentadjscriptscriptback#1{\setbox0\hbox{$#1$}\kern 
                -\expandafter\eeaa\the\fontdimen1\scriptscriptfont1 \ht0 }
\def\itoverline#1{{\mathsurround0pt\mathchoice
        {\rlap{$\accentadjtext{\displaystyle #1}
                \accentadjtext{\vrule height1.593pt}
                \overline{\phantom{\displaystyle #1}
                \accentadjtextback{\displaystyle #1}}$}{#1}}
        {\rlap{$\accentadjtext{\textstyle #1}
                \accentadjtext{\vrule height1.593pt}
                \overline{\phantom{\textstyle #1}
                \accentadjtextback{\textstyle #1}}$}{#1}}
        {\rlap{$\accentadjscript{\scriptstyle #1}
                \accentadjscript{\vrule height1.593pt}
                \overline{\phantom{\scriptstyle #1}
                \accentadjscriptback{\scriptstyle #1}}$}{#1}}
        {\rlap{$\accentadjscriptscript{\scriptscriptstyle #1}
                \accentadjscriptscript{\vrule height1.593pt}
                \overline{\phantom{\scriptscriptstyle #1}
                \accentadjscriptscriptback{\scriptscriptstyle #1}}$}{#1}}}}
\def\vint{\mathop{\mathchoice%
          {\setbox0\hbox{$\displaystyle\intop$}\kern 0.22\wd0%
           \vcenter{\hrule width 0.6\wd0}\kern -0.82\wd0}%
          {\setbox0\hbox{$\textstyle\intop$}\kern 0.2\wd0%
           \vcenter{\hrule width 0.6\wd0}\kern -0.8\wd0}%
          {\setbox0\hbox{$\scriptstyle\intop$}\kern 0.2\wd0%
           \vcenter{\hrule width 0.6\wd0}\kern -0.8\wd0}%
          {\setbox0\hbox{$\scriptscriptstyle\intop$}\kern 0.2\wd0%
           \vcenter{\hrule width 0.6\wd0}\kern -0.8\wd0}}%
          \mathopen{}\int}
\newcommand{\Cp}{{C_p}}
\newcommand{\grad}{\nabla}
\DeclareMathOperator{\Div}{div}
\newcommand{\dvg}{\Div}
\DeclareMathOperator{\cp}{cap}
\newcommand{\capt}{\cp_{p,G_t}}
\newcommand{\bdry}{\partial}
\newcommand{\bdy}{\bdry}
\newcommand{\loc}{_{\rm loc}}
\newcommand{\simge}{\gtrsim}
\newcommand{\simle}{\lesssim}
\newcommand{\limplus}{{\mathchoice{\vcenter{\hbox{$\scriptstyle +$}}}
  {\vcenter{\hbox{$\scriptstyle +$}}}
  {\vcenter{\hbox{$\scriptscriptstyle +$}}}
  {\vcenter{\hbox{\scalebox{0.8}{$\scriptscriptstyle +$}}}}
}}
\newcommand{\limminus}{{\mathchoice{\vcenter{\hbox{$\scriptstyle -$}}}
  {\vcenter{\hbox{$\scriptstyle -$}}}
  {\vcenter{\hbox{$\scriptscriptstyle -$}}}
  {\vcenter{\hbox{\scalebox{0.8}{$\scriptscriptstyle -$}}}}
}}
\newcommand{\A}{{\mathcal A}}
\newcommand{\B}{{\mathcal A}}
\newcommand{\al}{\alpha}
\newcommand{\la}{\lambda}
\newcommand{\ka}{\kappa}
\newcommand{\de}{\delta}
\newcommand{\Om}{\Omega}
\newcommand{\om}{\omega}
\renewcommand{\phi}{\varphi}
\newcommand{\eps}{\varepsilon}
\newcommand{\p}{{$p\mspace{1mu}$}}
\newcommand{\R}{\mathbf{R}}
\newcommand{\ub}{\bar{u}}
\newcommand{\vb}{\bar{v}}
\newcommand{\ut}{\tilde{u}}
\newcommand{\uh}{\hat{u}}
\newcommand{\vh}{\hat{v}}
\newcommand{\vt}{\tilde{v}}
\newcommand{\ft}{\tilde{f}}
\newcommand{\gt}{\tilde{g}}
\newcommand{\fb}{\bar{f}}
\newcommand{\gb}{\bar{g}}
\newcommand{\fh}{\hat{f}}
\newcommand{\wt}{\widetilde{w}}
\newcommand{\Ft}{\widetilde{F}}
\newcommand{\Kt}{\widetilde{K}}
\newcommand{\Khat}{\Kt}   
\newcommand{\Et}{\widetilde{E}}
\newcommand{\Tt}{\widetilde{T}}
\newcommand{\clG}{\itoverline{G}}
\newcommand{\clBprime}{{\,\overline{\!B'}}}
\newcommand{\clBi}{{\,\overline{\!B^i}}}
\newcommand{\clB}{\itoverline{B}}
\newcommand{\clD}{\itoverline{D}}
\newcommand{\Dplus}{D_\limplus}
\newcommand{\Dminus}{D_\limminus}
\newcommand{\Wp}{W^{1,p}}
\newcommand{\Llp}{L^{1,p}}
\newcommand{\Hp}{H^{1,p}}
\newcommand{\Wploc}{W^{1,p}\loc}
\newcommand{\phit}{{\widetilde{\phi}}}
\newcommand{\phib}{{\itoverline{\phi}}}
\newcommand{\eqv}{\ensuremath{\mathchoice{\quad \Longleftrightarrow \quad}{\Leftrightarrow}}
                {\Leftrightarrow}{\Leftrightarrow}}
\def\cprime{{\mathsurround0pt$'$}}
\def\vint{\mathop{\mathchoice%
          {\setbox0\hbox{$\displaystyle\intop$}\kern 0.22\wd0%
           \vcenter{\hrule width 0.6\wd0}\kern -0.82\wd0}%
          {\setbox0\hbox{$\textstyle\intop$}\kern 0.2\wd0%
           \vcenter{\hrule width 0.6\wd0}\kern -0.8\wd0}%
          {\setbox0\hbox{$\scriptstyle\intop$}\kern 0.2\wd0%
           \vcenter{\hrule width 0.6\wd0}\kern -0.8\wd0}%
          {\setbox0\hbox{$\scriptscriptstyle\intop$}\kern 0.2\wd0%
           \vcenter{\hrule width 0.6\wd0}\kern -0.8\wd0}}%
          \mathopen{}\int}
\numberwithin{equation}{section}
\newenvironment{ack}{\medskip{\it Acknowledgement.}}{}
\begin{document}

\authortitle{Jana Bj\"orn  and Abubakar Mwasa}
{Mixed boundary value problem for \p-harmonic functions in an infinite 
cylinder}
\title{Mixed boundary value problem for \\
\p-harmonic functions in an infinite 
cylinder}
\author{
Jana Bj\"orn \\
\it\small Department of Mathematics, Link\"oping University, \\
\it\small SE-581 83 Link\"oping, Sweden\/{\rm ;}
\it \small jana.bjorn@liu.se
\\
\\
Abubakar Mwasa \\
\it\small Department of Mathematics, Link\"oping University, \\
\it\small SE-581 83 Link\"oping, Sweden\/{\rm ;}
\it \small abubakar.mwasa@liu.se
\\ 
\it\small Department of Mathematics, Busitema University, \\
\it\small P.O.Box 236, Tororo, Uganda\/{\rm ;}
\it \small a.mwasa@yahoo.com
}
\date{}

\maketitle

\noindent{\small {\bf Abstract}.
We study a mixed boundary value problem for the \p-Laplace equation 
$\Delta_p u=0$ in an open infinite circular half-cylinder
with prescribed Dirichlet boundary data on a part of the boundary
and zero Neumann boundary data on the rest. 
Existence of weak solutions to the mixed problem
is proved both for Sobolev and for continuous data on the Dirichlet 
part of the boundary.
We also obtain a boundary regularity result for the point at infinity
in terms of a variational capacity adapted to the cylinder.
}

\bigskip

\noindent {\small \emph{Key words and phrases}: Boundary regularity, capacity, 
Dirichlet and Neumann data, existence of weak solutions, mixed boundary value problem, 
\p-Laplace equation, unbounded cylinder, Wiener criterion.
}

\medskip

\noindent {\small Mathematics Subject Classification (2020):
Primary:  35J25 
Secondary: 31B15, 35B40, 35B65, 35J92.  
}

\section{Introduction}
\label{sect-intr}
When solving the Dirichlet problem for a given partial differential equation, 
in a nonempty open set $\Om\subset \R^n$, one primarily 
seeks a solution $u$ which is constructed from the boundary data 
$f\in C(\bdy\Om)$ so that
\begin{equation}		\label{eq-reg}
\lim_{\Om\ni x\to x_0}u(x)=f(x_0)\quad\text{for } x_0\in\bdy\Om.
\end{equation}
This may or may not be possible for all boundary points.
Therefore, the solution $u$ is often found in a suitable Sobolev space associated 
with the studied equation and the boundary data are only attained in a weak sense.
We say that $x_0\in\bdy\Om$ is \emph{regular} for the considered equation 
if \eqref{eq-reg} holds for all continuous boundary data $f$.
If all the boundary points are regular, the solution attains its continuous boundary 
data in the classical sense.

At irregular boundary points, equality \eqref{eq-reg} may fail even for
continuous boundary data.
The first examples of this phenomenon were given for the Laplace equation $\Delta u=0$ 
in 1911 by Zaremba~\cite{zar} in the punctured ball and in 1912 by 
Lebesgue~\cite{Les} in the complement of the so-called Lebesgue spine.   

Regularity of a boundary point $x_0\in\bdy\Om$ for the Laplace equation $\Delta u=0$ 
can be characterized by the celebrated \emph{Wiener criterion} which was established 
in $1924$ by Wiener~\cite{Wie}. 
With this criterion, one measures the thickness of the complement of $\Om$ 
near $x_0$ in terms of capacities.
Roughly speaking, $x_0$ is regular if the complement is thick enough at $x_0$.

Boundary regularity has been later studied for more general elliptic equations, 
mainly in bounded open sets.
These studies include linear uniformly elliptic equations 
with bounded measurable coefficients in Littman--Stampacchia--Weinberger~\cite{LSW}, 
degenerate linear elliptic equations in Fabes--Jerison--Kenig~\cite{FJK}, 
as well as many nonlinear elliptic equations.
In particular, Maz\cprime ya~\cite{VGM} obtained pointwise estimates 
near a boundary point for weak solutions 
of elliptic quasilinear equations, including the \p-Laplace 
equation~\eqref{eq-p-lap}.
These estimates lead to a sufficient condition for boundary regularity 
for such equations.
Gariepy--Ziemer~\cite{GZ} generalized Maz\cprime ya's result to a larger class of 
elliptic quasilinear equations.

The necessity part of the Wiener criterion for elliptic quasilinear equations 
was for $p>n-1$ proved by Lindqvist--Martio~\cite{LM} and
for all $p>1$ by Kilpel\"ainen--Mal\' y~\cite{TK-JM}.
For weighted elliptic quasilinear equations, the sufficiency part 
was obtained in Heinonen--Kilpel\"ainen--Martio~\cite{HKM}, 
while the necessity condition was established by Mikkonen~\cite{M}.

In this paper, we consider a mixed boundary value problem for the \p-Laplace equation
\begin{equation}   \label{eq-p-lap}
\Delta_pu:=\Div(|\grad u|^{p-2}\grad u)=0, \quad 1<p<\infty, 
\end{equation}
in an open infinite circular half-cylinder  
with zero Neumann boundary data on a part of the boundary and prescribed Dirichlet data 
on the rest of the boundary. 
In Theorem~\ref{thm-ex-sol-Lip}, we prove the existence of weak 
solutions to the mixed boundary value problem 
for \eqref{eq-p-lap} with Sobolev type Dirichlet data. 
For continuous Dirichlet data, we obtain the following result.

\begin{thm}	\label{thm-cont}
Let $G=B'\times(0,\infty)$ be the open infinite circular half-cylinder in $\R^n$, 
where $B'$ is the unit ball in $\R^{n-1}$, and $F$ be an unbounded closed subset 
of $\clG$ containing the base $B'\times\{0\}$ of $G$.
Let $f$ be a continuous function on $F_0:=F\cap\bdy(G\setm F)$ 
such that the limit 
\begin{equation} \label{eq-limf}
f(\infty):= \lim_{F_0\ni x\to\infty}  f(x)
\quad \text{exists and is finite.}
\end{equation}

Then there exists a bounded continuous weak solution $u\in\Wploc(G\setm F)$ 
of the \p-Laplace equation \eqref{eq-p-lap} in $G\setm F$,
with zero Neumann boundary data on $\bdy G\setm F$,
 attained in the weak sense of \eqref{eq-weak-sol-p-Lapl},
and Dirichlet boundary data $f$ on~$F_0$,  attained as the limit
\begin{equation}          \label{eq-lim=f-qe}
\lim_{G\setm F\ni x\to x_0}u(x)=f(x_0)
\end{equation}
for all $x_0\in F_0$, except possibly for a set of Sobolev $C_p$-capacity zero.
Moreover,  the limit $\lim_{G\setm F\ni x\to x_0}u(x)$
exists and is finite for all $x_0\in \bdy G\setm F$.
\end{thm}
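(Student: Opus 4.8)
The strategy is to obtain $u$ as a uniform limit of weak solutions of the mixed problem with Lipschitz (hence Sobolev-type) Dirichlet data, for which Theorem~\ref{thm-ex-sol-Lip} supplies existence, and then to transfer the equation, the Neumann condition and the boundary values to the limit. We may assume $f(\infty)=0$, since the general case follows by adding the constant $f(\infty)$ to the solution constructed below, which preserves all the asserted properties. Then $f$ is bounded on $F_0$ and tends to $0$ at infinity by \eqref{eq-limf}, hence it extends to a bounded continuous function on $\clG$ with the same property, and this extension is the uniform limit on $F_0$ of a sequence of compactly supported Lipschitz functions $f_j$. Theorem~\ref{thm-ex-sol-Lip} then yields, for each $j$, a weak solution $u_j\in\Wploc(G\setm F)$ of \eqref{eq-p-lap} in $G\setm F$ with zero Neumann data on $\bdy G\setm F$ in the sense of \eqref{eq-weak-sol-p-Lapl} and Dirichlet datum $f_j$ on $F_0$. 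By the weak maximum principle $\inf_{F_0}f_j\le u_j\le\sup_{F_0}f_j$, so the $u_j$ are uniformly bounded, and by interior $C^{1,\al}$ regularity for \p-harmonic functions together with boundary regularity up to $\bdy G\setm F$ --- obtained by straightening and even-reflecting across the smooth lateral boundary $\bdy B'\times(0,\infty)$, which locally converts the zero Neumann condition into a quasilinear elliptic equation of the type in \cite{HKM} --- each $u_j$ is continuous on $(G\setm F)\cup(\bdy G\setm F)$.

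\emph{Uniform convergence.} The key step is the comparison estimate
\[
\sup_{G\setm F}|u_i-u_j|\le\sup_{F_0}|f_i-f_j|=:c_{ij}.
\]
To prove it, subtract the weak equations for $u_i$ and $u_j$ and test with $(u_i-u_j-c_{ij})_+\psi_R$, where $\psi_R$ is a Lipschitz cutoff, equal to $1$ on $B'\times(0,R)$, vanishing outside $B'\times(0,2R)$, with $|\grad\psi_R|\le C/R$. The truncation $(u_i-u_j-c_{ij})_+$ is bounded and has zero Sobolev boundary values on $F_0$ (there $f_i-f_j\le c_{ij}$), while the term produced by $\grad\psi_R$ tends to $0$ as $R\to\infty$, by Hölder's inequality and the Caccioppoli bound $\int_{(B'\times(R,2R))\setm F}(|\grad u_i|^p+|\grad u_j|^p)\lesssim R^{1-p}$. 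Letting $R\to\infty$ and using strict monotonicity of $\xi\mapsto|\xi|^{p-2}\xi$ gives $\grad(u_i-u_j)=0$ a.e.\ on $\{u_i-u_j>c_{ij}\}$; since $F_0$ has positive capacity in every component of $G\setm F$, this forces $u_i-u_j\le c_{ij}$, and the reverse inequality is symmetric. Therefore $\{u_j\}$ is uniformly Cauchy on $G\setm F$; let $u$ be its uniform limit, which is bounded and continuous, with $\inf_{F_0}f\le u\le\sup_{F_0}f$.

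\emph{Passage to the limit and boundary values.} By Caccioppoli's inequality and the stability of \p-harmonicity under local uniform convergence (also across $\bdy G\setm F$, via the reflection above), $\grad u_j\to\grad u$ in $L^p\loc$ on $(G\setm F)\cup(\bdy G\setm F)$, and $u$ extends continuously across $\bdy G\setm F$; in particular $u\in\Wploc(G\setm F)$. Letting $j\to\infty$ in \eqref{eq-weak-sol-p-Lapl} shows that $u$ is a weak solution of \eqref{eq-p-lap} in $G\setm F$ with zero Neumann data on $\bdy G\setm F$, and the continuous extension is precisely the last assertion that $\lim_{G\setm F\ni x\to x_0}u(x)$ exists and is finite for every $x_0\in\bdy G\setm F$. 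For the Dirichlet data, Theorem~\ref{thm-ex-sol-Lip} gives, for each $j$, a set $E_j\subset F_0$ with $\Cp(E_j)=0$ off which $\lim_{x\to x_0}u_j(x)=f_j(x_0)$; then $E:=\bigcup_jE_j$ has $\Cp(E)=0$ by countable subadditivity of the Sobolev capacity, and for $x_0\in F_0\setm E$ and $\eps>0$, choosing $j$ with $\sup_{G\setm F}|u-u_j|<\eps$ and $\sup_{F_0}|f-f_j|<\eps$ yields
\[
|u(x)-f(x_0)|\le|u(x)-u_j(x)|+|u_j(x)-f_j(x_0)|+|f_j(x_0)-f(x_0)|<3\eps
\]
for all $x\in G\setm F$ sufficiently close to $x_0$. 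Hence \eqref{eq-lim=f-qe} holds for every $x_0\in F_0$ outside the $\Cp$-null set $E$, completing the proof.

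\emph{Main obstacle.} I expect the comparison estimate of the second step to be the crux --- specifically, making the passage $R\to\infty$ rigorous on the \emph{unbounded} domain $G\setm F$, and knowing that $F_0$ is capacitarily substantial in each component of $G\setm F$ so that a function with zero boundary values there and vanishing gradient must vanish; this is where the hypotheses that $F$ is unbounded, contains the base, and carries the finite limit \eqref{eq-limf} (so that the approximants $f_j$ have compact support) are genuinely used. The boundary regularity up to $\bdy G\setm F$ obtained by reflection is the second technical ingredient, but it is of a standard, purely local nature.
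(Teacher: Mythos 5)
Your overall strategy (uniform approximation of $f$ by compactly supported Lipschitz $f_j$, comparison to get uniform Cauchyness of the corresponding solutions $u_j$, then a $3\eps$‑argument for the Dirichlet part) is exactly the structure the paper uses in Theorem~\ref{thm-sol-Lip} and the subsequent proof of Theorem~\ref{thm-cont}. Two observations, the second of which is a genuine gap.

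First, your ad hoc testing argument with cutoffs $\psi_R$ for the oscillation estimate $\sup_{G\setm F}|u_i-u_j|\le\sup_{F_0}|f_i-f_j|$ is unnecessary, and in fact somewhat fragile (one needs $(u_i-u_j-c_{ij})_+\psi_R$ to be an admissible test function, a lattice property to control its Sobolev boundary values, and a Caccioppoli estimate on the strip $B'\times(R,2R)$ that you assert but do not prove). Theorem~\ref{thm-ex-sol-Lip} already contains a comparison principle, stated for Sobolev data $f_1\le f_2$ on $F_0$; since constants belong to $\Llp_\ka(G\setm F)$ (the weight $e^{-p\ka x_n}$ is integrable over $G$), you may apply it directly to $f_i$ and $f_j+c_{ij}$ and conclude $u_i\le u_j+c_{ij}$, and symmetrically the reverse, without any cutoff or Caccioppoli argument. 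This is essentially what the paper does, but on the transformed ball $D$ where the comparison principle is \cite[Lemma~3.18]{HKM}.

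Second, and this is the substantive issue: you invoke
``Theorem~\ref{thm-ex-sol-Lip} gives, for each $j$, a set $E_j\subset F_0$ with $C_p(E_j)=0$ off which $\lim_{x\to x_0}u_j(x)=f_j(x_0)$.''
Theorem~\ref{thm-ex-sol-Lip} does not say this. It produces a solution $u_j$ with $u_j-f_j\in\Llp_{\ka,0}(\clG\setm F)$, i.e.\ the Dirichlet condition holds in the \emph{Sobolev}, not pointwise, sense. Passing from that to $C_p$-quasi-everywhere pointwise attainment of continuous boundary data is precisely a Kellogg-type property for the mixed \p-Laplace problem on the unbounded domain $G\setm F$, and it cannot be taken off the shelf: it is the central technical point, and the reason the paper transforms the problem at all. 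Concretely, the paper gets it by (i) mapping $G\setm F$ to the bounded domain $D=B_1\setm\Ft$ and the \p-Laplacian to the weighted quasilinear operator $\dvg\B(\xi,\nabla\ut)$, (ii) invoking the Kellogg property in the form \cite[Theorem~8.10]{HKM} for that operator on $D$ to obtain a single exceptional set $Z\subset\bdy D$ of zero $(p,\wt)$-capacity, independent of $j$, and (iii) transferring zero capacity back to the cylinder by Lemma~\ref{lem-zero-cap}, yielding a single $C_p$-null set $Z_0=T^{-1}(Z\cap T(\clG))$. Your argument needs (at the very least) such an exceptional set for each $j$, and establishing even one of them essentially forces you to reproduce the paper's transformation — at which point a single set serves all $j$. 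A similar remark applies to the last assertion (regularity of the solution at the Neumann boundary $\bdy G\setm F$): you propose a local straightening/reflection of the curved lateral boundary, which is plausible but needs proof, whereas the paper gets it for free from interior H\"older continuity of $\ut$ in $D$ via \cite[Theorem~3.70]{HKM}, since the Neumann boundary has been turned into interior points of $D$.

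In short: the skeleton and the $3\eps$-argument at the end are the same as the paper's, but the claim you need from Theorem~\ref{thm-ex-sol-Lip} is not in it, and supplying it is where the paper's entire ball-transformation machinery (Sections~\ref{sect-cyl-ball}--\ref{sect-rem-Neu-data} and Lemma~\ref{lem-zero-cap}) is actually doing the work.
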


Note that the set $F$ need not be a part of the boundary $\bdy G$ and thus, 
equation \eqref{eq-p-lap} can be considered on a more general subset of the
cylinder $G$. 
The zero Neumann condition is, however, prescribed only on a part of
the lateral boundary~$\bdy G$.

We also study boundary regularity of the point at infinity for these solutions.
More precisely, in Theorem~\ref{thm-reg-infty} we show that for continuous 
Dirichlet boundary data $f$ satisfying \eqref{eq-limf},
the solution $u$ satisfies
\[
\lim_{G\setm F\ni x\to \infty}u(x)=f(\infty)
\]
if and only if the Dirichlet part of the boundary is sufficiently large in terms of 
a certain capacity, namely: 
\begin{equation}   \label{eq-Wiener-intro}
\int_1^\infty\cp_{p,G_{t-1}} \bigl(F\cap\bigl(\clBprime 
                    \times [t,2t]\bigr)\bigr)^{1/(p-1)}\,dt=\infty.
\end{equation}
Here the capacity $\cp_{p,G_{t-1}}$ is for compact sets 
\(
K\subset G_{t-1}:= \clBprime \times (t-1,\infty)
\)
defined by 
\[
\cp_{p,G_{t-1}}(K)=\inf_v \int_{G_{t-1}}|\grad v|^p\,dx,
\]
with the infimum taken over all $v\in C_0^\infty(\R^n)$ satisfying $v\ge 1$ on $K$ 
and $v=0$ on $\clG\setm G_{t-1}$.
We also relate $\cp_{p,G_{t-1}}$ to the standard Sobolev 
\p-capacity in~$\R^n$. 
In particular, Lemmas~\ref{lem-cap-Cp} and~\ref{lem-Cp-cap} show that
for $K\subset \clG_t\setm G_{t+1}$, the two capacities are comparable,
but this is not true for general $K\subset G_{t-1}$.

To obtain these results, we 
use the change of variables introduced in Bj\"orn~\cite{JB} to 
transform the infinite half-cylinder $G$ and the \p-Laplace equation \eqref{eq-p-lap} 
into a unit half-ball and a weighted elliptic quasilinear equation 
\begin{equation} \label{eq-ell-eq}
\Div\A(\xi,\grad u(\xi))=0,
\end{equation}
respectively. 
In order to use the theory of Dirichlet problems,
developed in Heinonen--Kilpel\"ainen--Martio~\cite{HKM} for the 
equation~\eqref{eq-ell-eq}, the Neumann data are removed by reflecting 
the unit half-ball and the equation \eqref{eq-ell-eq} to the whole unit ball. 
We then use the Wiener criterion for such equations, together with tools 
from~\cite{HKM}, to determine the regularity of the point at infinity 
and to prove the existence of continuous weak solutions to the mixed boundary value problem 
for~\eqref{eq-p-lap}.

Compared to the Dirichlet problem in bounded domains, there are relatively 
few studies of boundary value problems with respect to unbounded domains 
and with mixed boundary data. 
Early work on mixed boundary value problems was due to Zaremba~\cite{ZarProb}
and such problems are therefore sometimes called \emph{Zaremba problems}.
Kerimov--Maz\cprime ya--Novruzov~\cite{KMV} characterized regularity of the point 
at infinity for the Zaremba problem for the Laplace equation $\Delta u=0$ 
in an infinite half-cylinder. 
Bj\"orn~\cite{JB} studied a similar problem for certain linear 
weighted elliptic equations.
Our results partially extend the ones in \cite{JB} and \cite{KMV} to 
the \p-Laplace equation~\eqref{eq-p-lap}, even though the necessary
and sufficient conditions obtained therein  are formulated differently.

The organization of the paper is as follows.
In Section~\ref{sect-pre-not} we introduce the notation and give the
definition of weak solutions.
Section~\ref{sect-cyl-ball} is devoted to transforming the infinite 
half-cylinder together with the \p-Laplace equation \eqref{eq-p-lap} 
into a unit half-ball with the weighted elliptic quasilinear 
equation~\eqref{eq-ell-eq}.
In Section~\ref{sect-pro-ope-B}, we state and prove some properties 
of the obtained operator $\Div\A(\xi,\grad u(\xi))$,  
such as ellipticity and monotonicity, needed to apply the results from 
Heinonen--Kilpel\"ainen--Martio~\cite{HKM}.
   
In Section~\ref{sect-rem-Neu-data}, the Neumann boundary data are removed 
by means of a reflection and the
mixed boundary value problem is turned into a Dirichlet problem.
This makes it is possible to use the tools developed for 
weighted elliptic quasilinear equations in \cite{HKM}.
Sections~\ref{sect-pro-ope-B} and~\ref{sect-rem-Neu-data} also contain comparisons 
of appropriate function spaces on the half-cylinder and those on the ball.
In Section~\ref{sec-existence}, we prove the existence of continuous weak solutions 
to the mixed boundary value problem for \eqref{eq-p-lap}. 
Section~\ref{sect-cap} is devoted to comparing two variational capacities: 
one associated with the weighted Sobolev spaces on the unit ball 
and the other defined on the half-cylinder. 
These are crucial for studying the boundary regularity at infinity in Section~\ref{sect-bdry-reg}.

\begin{ack}
J.~B. was partially supported by the Swedish Research Council 
grants 621-2014-3974 and~2018-04106.
A.~M. was supported by the SIDA (Swedish International Development Cooperation Agency)
project 316-2014  ``Capacity building in Mathematics and its applications'' 
under the SIDA bilateral program with the Makerere University 2015--2020,
contribution No.\ 51180060.
\end{ack}


\section{Notation and formulation of the mixed problem}
\label{sect-pre-not}

Throughout the paper, we represent points in the 
$n$-dimensional Euclidean space $\R^n=\R^{n-1}\times \R$, $n\geq 2$, as 
$x=(x',x_n)=(x_1,\cdots,x_{n-1}, x_n)$. 
We shall consider the open infinite circular half-cylinder 
\[
G=B'\times (0,\infty),
\]
where $B'=\{x'\in\R^{n-1}:|x'|<1\}$ is the unit ball in $\R^{n-1}$.

Let $F$ be a closed subset of $\itoverline{G}$. 
Assume also that $F$ contains the base $B'\times \{0\}$ of $G$.
Let $1<p<\infty$ be fixed.
We shall consider a \emph{mixed boundary value problem} for the 
\p-Laplace equation $\Delta_p u=0$  in $G\setm F$ with Dirichlet boundary data 
\[
u=f\quad\text{on }F\cap \bdy(G\setm F)=:F_0
\] 
and zero Neumann boundary data 
\[
\frac{\partial u}{\partial n}=0\quad\text{on }\partial G\setm F,
\] 
where $n$ is the outer normal of $G$.

Note that $F$ is not necessarily a subset of $\bdy G$, which makes it possible 
to consider more general domains contained in $G$. 
If $\bdy G\subset F$, then the mixed boundary value problem reduces to a purely 
Dirichlet problem on such domains contained in $G$.

The \p-Laplace equation~\eqref{eq-p-lap} and the Neumann 
condition will be considered in the weak sense as follows:

\begin{defi}    \label{defi-weak-p-lap}
A function $u\in \Wp\loc(G\setm F)$ is a \emph{weak solution} 
of the mixed boundary value problem for the \p-Laplace 
equation $\Delta_pu=0$ in $G\setm F$ with \emph{zero Neumann boundary data} 
on $\partial G\setm F$ if the integral identity 
\begin{equation} \label{eq-weak-sol-p-Lapl}
\int_{G\setm F} |\nabla u|^{p-2}\nabla u \cdot \nabla\phi\, dx=0
\quad \text{holds for all }    \phi \in C_0^\infty(\clG\setm F),
\end{equation}
where $\cdot$ denotes the scalar product in $\R^n$ and 
\begin{equation}   \label{eq-def-C0-infty}
C_0^\infty(\clG\setm F) := 
    \{v|_{\clG}: v\in C_0^\infty(\R^n\setm F)\}.
\end{equation}
Recall that for an open set $\Om\subset\R^n$, the space $C_0^{\infty}(\Om)$ 
consists of all infinitely many times continuously differentiable functions with 
compact support in  $\Om$, extended by zero outside $\Om$ if needed.
\end{defi}

In \eqref{eq-weak-sol-p-Lapl} it is implicitly assumed that the 
integral exists for all test functions
$\phi \in C_0^{\infty}(\clG\setm F)$.
This need not be the case for a general $u\in \Wp\loc(G\setm F)$.

\section{Transforming the half-cylinder into a half-ball}  
\label{sect-cyl-ball}

In this section, we shall see that the \p-Laplace operator in the open 
infinite circular half-cylinder $G$
corresponds to a weighted quasilinear elliptic operator on the unit half-ball.
The following change of variables was 
introduced in Bj\"orn~\cite[Section~3]{JB}.

Let $\ka>0$ be a fixed constant and define 
\begin{equation}   \label{eq-def-xi-from-x}
\xi'= \frac{2e^{-\ka x_n} x'}{1+|x'|^2} \quad \text{and} \quad
\xi_n=\frac{e^{-\ka x_n}(1-|x'|^2)}{1+|x'|^2},
\end{equation}
where we adopt the notation 
$\xi=(\xi',\xi_n)=(\xi_1,\ldots,\xi_{n-1},\xi_n)\in \R^n$, 
similar to $x=(x',x_n)\in \R^n$.
The mapping $x\mapsto\xi=T(x)$ is defined on $\R^n$ with values in
\[
T(\R^n) = \R^n\setm\{(\xi',\xi_n)\in \R^n: \xi'=0\text{ and }\xi_n\leq 0\}.
\]
We will mainly consider $T$ on $G$ and its closure $\clG$.
It is easily verified that
\begin{align*}
T(G) &= \{\xi\in\R^n: |\xi|<1\text{ and } \xi_n>0\}, \\ 
T(\clG) &= \{\xi\in\R^n: 0<|\xi|\le1\text{ and } \xi_n\ge0\}
\end{align*}
are the open and the closed upper unit half-balls, respectively, with the
origin $\xi=0$ removed. 
Note that 
\[
|\xi|=|T(x)|=e^{-\ka x_n} \to 0 \quad\text{as } x_n \to \infty,
\]
so the point at infinity for the half-cylinder $G$ corresponds to 
the origin $\xi=0$.
Throughout the paper we will use $x$ for points in $\clG$,
while $\xi$ will be used for points in the target space of $T$.

A direct calculation shows that the inverse mapping $T^{-1}$ of $T$
is given by:
\begin{equation}   \label{eq-x'-x-n-from-xi}
x'= \frac{\xi'}{|\xi|+\xi_n} \quad \text{and} \quad
x_n=-\frac{1}{\ka}\log|\xi|.
\end{equation}
The following lemma is then easily proved by induction.

\begin{lem}   \label{lem-deriv}
Let $\al=(\al_1,\al_2,\ldots,\al_n)$ be a multiindex of order $m\ge1$,
that is, $\al_j$ are nonnegative integers, $j=1,2,\ldots,n$,
and $m=\al_1+\ldots +\al_n$.
The partial derivatives of $T$ can then be written in the form
\[
\bdy^\al \xi_k(x) := \bdy^{\al_1}_{x_1} \cdots \bdy^{\al_n}_{x_n} \xi_k(x)
= \frac{\ka^{\al_n} e^{-\ka x_n} P_{k,\al}(x')}{(1+|x'|^2)^{m+1}},
\quad k=1,2,\ldots, n,
\]
where $P_{k,\al}(x')$ are polynomials in $x_1, \ldots, x_{n-1}$ with integer
coefficients.

Conversely, the partial derivatives of the inverse mapping are
\begin{align*}
\bdy^\al x_k(\xi)
&= \sum_{\substack{ 0\le j+l\le 2m+1 \\ j,l \ge0}{}} 
     \frac{P_{j,k,l,\al}(\xi)}{|\xi|^{j} (|\xi|+\xi_n)^{l}},
\quad k=1,2,\ldots, n-1, \\
\bdy^\al x_n(\xi) &= \frac{P_\al(\xi)}{\ka|\xi|^{2m}},
\end{align*}
where $P_{j,k,\al}(\xi)$ and $P_\al(\xi)$ are polynomials in $\xi_1, \ldots, \xi_{n}$ 
with integer coefficients.
\end{lem}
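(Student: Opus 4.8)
The plan is to argue by induction on the order $m$ of the multiindex, handling the direct map $T$ and its inverse $T^{-1}$ separately. Note first that $T$ is smooth on all of $\R^n$ (since $1+|x'|^2\ge1$) and $T^{-1}$ is smooth on $T(\R^n)$ away from the removed half-line, so all the partial derivatives below exist and mixed partials commute; hence in the inductive step it suffices, given a multiindex $\be$ of order $m+1$, to pick an index $j$ with $\be_j\ge1$, let $\al$ be the multiindex obtained from $\be$ by replacing $\be_j$ with $\be_j-1$ (so $\al$ has order $m$), and apply $\bdy_{x_j}$ (respectively $\bdy_{\xi_j}$) to the order-$m$ formula for $\bdy^\al$.

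For the direct map, the base case $m=1$ is the quotient-rule computation: $\bdy_{x_n}\xi_k=-\ka\xi_k$ has the claimed shape after multiplying numerator and denominator by $1+|x'|^2$, while for $j\le n-1$, $\bdy_{x_j}$ of $2e^{-\ka x_n}x_k/(1+|x'|^2)$ (and of the analogous expression for $\xi_n$) gives a ratio with denominator $(1+|x'|^2)^2$ and integer-coefficient numerator. In the inductive step, if $j=n$ one only picks up a factor $-\ka$, so $\be_n=\al_n+1$ and $P_{k,\be}=-(1+|x'|^2)P_{k,\al}$; if $j\le n-1$ the quotient rule turns $(1+|x'|^2)^{m+1}$ into $(1+|x'|^2)^{m+2}$ with numerator $(\bdy_{x_j}P_{k,\al})(1+|x'|^2)-2(m+1)x_jP_{k,\al}$, again a polynomial with integer coefficients since $m+1\in\Z$, and $\be_n=\al_n$. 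This is exactly the claimed form at order $m+1$.

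For the inverse map, the formula for $x_n=-\ka^{-1}\log|\xi|$ follows from the identity $\bdy_{\xi_j}|\xi|^{2m}=2m\xi_j|\xi|^{2m-2}$: applying $\bdy_{\xi_j}$ to $P_\al(\xi)/(\ka|\xi|^{2m})$ produces $((\bdy_{\xi_j}P_\al)|\xi|^2-2m\xi_jP_\al)/(\ka|\xi|^{2m+2})$, and since $|\xi|^2=\xi_1^2+\dots+\xi_n^2$ is an integer-coefficient polynomial, so is the new numerator, while the denominator exponent is $2(m+1)$. For $x_k=\xi_k/(|\xi|+\xi_n)$ with $k\le n-1$, the base case $m=1$ again comes from the quotient rule together with $\bdy_{\xi_j}|\xi|=\xi_j/|\xi|$, producing terms with $(|\xi|,\,|\xi|+\xi_n)$-exponent pairs $(0,1)$, $(1,2)$ and $(0,2)$, all with exponent sum $\le3=2\cdot1+1$. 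For the inductive step, differentiating a generic term $P_{j,k,l,\al}(\xi)/(|\xi|^j(|\xi|+\xi_n)^l)$ with respect to $\xi_i$ yields four terms, with exponent pairs $(j,l)$, $(j+2,l)$, $(j+1,l+1)$ and $(j,l+1)$ and numerators $\bdy_{\xi_i}P_{j,k,l,\al}$, $-j\xi_iP_{j,k,l,\al}$, $-l\xi_iP_{j,k,l,\al}$ and $-l\delta_{in}P_{j,k,l,\al}$; each new exponent sum is at most $(j+l)+2\le(2m+1)+2=2(m+1)+1$, each numerator is an integer-coefficient polynomial, and summing the finitely many contributions that land in each slot $0\le j+l\le 2(m+1)+1$ gives the desired form at order $m+1$.

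The only point requiring genuine care is this last one: one must verify that a single differentiation raises the total denominator exponent $j+l$ by at most $2$, which is precisely absorbed by the increase of the bound from $2m+1$ to $2(m+1)+1$, and that the resulting numerators never acquire non-integer coefficients (which comes down to $m$, $m+1$, $2m$ being integers and $1+|x'|^2$, $|\xi|^2$ being integer-coefficient polynomials). Everything else is routine bookkeeping.
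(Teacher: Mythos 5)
Your induction is correct and is exactly the argument the paper gestures at when it says the lemma ``is then easily proved by induction''; you have simply written out the base cases and the bookkeeping for the denominator exponents, which the paper leaves to the reader. No issues.
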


Note that 
\begin{equation}   \label{eq-xi+xi-n}
|\xi|+\xi_n = \frac{2e^{-\ka x_n}}{1+|x'|^2}
\end{equation}
is positive and bounded away from $0$ as long as $x$ stays within a bounded set
in $\R^{n}$, or equivalently, as long as $\xi$ stays away from $0$ and from 
the negative $\xi_n$-axis.
In particular, $|\xi|(|\xi|+\xi_n)>0$ in $T(\R^n)$ and hence
$T$ is a smooth diffeomorphism between $\R^n$ and $T(\R^n)$.

Our next step is to see how the \p-Laplace equation \eqref{eq-p-lap}
transforms under the diffeomorphism $T$.
For notational purposes, we regard the differential 
\[
dT(x):h\longmapsto dT(x)h
\]
of $T$ as the left-multiplication of the
column vector $h\in\R^n$ by the Jacobian matrix of partial derivatives
\[
dT(x):=\begin{pmatrix}
\frac{\partial\xi_1}{\partial x_1}&\cdots & \frac{\partial\xi_1}{\partial x_n}\\[1mm]
\frac{\partial\xi_2}{\partial x_1}&\cdots & \frac{\partial\xi_2}{\partial x_n}\\
\vdots & \ddots &\vdots\\
\frac{\partial\xi_n}{\partial x_1}&\cdots & \frac{\partial\xi_n}{\partial x_n}\\
\end{pmatrix}.
\]
With this matrix convention, the chain rule for $u$ and 
$\ut=u\circ T^{-1}$ can be written as 
\begin{equation}  \label{eq-chain-rule}
\grad u(x)=dT^*(x)\grad \ut(\xi), 
\quad \text{where }\xi=T(x), 
\end{equation}
$dT^*(x)$ is the transpose of the matrix $dT(x)$ and
the distributional gradients $\grad u(x)$ and $\grad \ut(\xi)$ 
are seen as column vectors in $\R^n$. 

Formula \eqref{eq-chain-rule} clearly holds when $u$ and $\ut$
are smooth, while for functions in $L^1\loc$ with distributional gradients  
in $L^1\loc$ it is obtained by mollification and holds a.e.,
see for example Ziemer~\cite[Theorem~2.2.2 and Section~1.6]{Zie}
or H\"ormander~\cite[Section~6.1]{horm}.

We shall substitute the chain rule (\ref{eq-chain-rule}) into equation 
\eqref{eq-weak-sol-p-Lapl} to obtain the corresponding integral identity on the 
unit half-ball $T(G)$.

\begin{lem}              \label{lem-int-id-ball}
Let $u,\phi\in \Wp(U)$ for some open $U\subset G$ and set 
$\ut= u\circ T^{-1}$ and $\phit= \phi\circ T^{-1}$.
Then for any measurable $A\subset U$,
\begin{equation}  \label{eq-p-Lapl-to-B}
\int_{A} |\nabla u|^{p-2}\nabla u\cdot\nabla \phi\, dx
=\int_{T(A)} \B(\xi,\grad\ut)\cdot\grad \phit \,d\xi,
\end{equation}
where $\B$ is for $\xi=T(x)\in T(G)$ and $q\in\R^n$ defined by
\begin{equation}   \label{eq-def-B}
\B(\xi,q)= |dT^*(x)q|^{p-2}|J_T(x)|^{-1} dT(x)dT^*(x)q. 
\end{equation}
\end{lem}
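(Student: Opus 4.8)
The plan is to reduce \eqref{eq-p-Lapl-to-B} to the chain rule \eqref{eq-chain-rule} combined with the classical change of variables formula for the diffeomorphism $T$. First I would note that since $U\subset G$, its image $T(U)$ lies in the punctured half-ball $T(G)$ and thus stays away from the origin and from the negative $\xi_n$-axis; by \eqref{eq-x'-x-n-from-xi} and Lemma~\ref{lem-deriv} (see also \eqref{eq-xi+xi-n}) the inverse $T^{-1}$ is then smooth on $T(U)$ with locally bounded first derivatives, and $|J_T|$ is locally bounded away from $0$ and $\infty$ on $U$. Consequently $\ut=u\circ T^{-1}$ and $\phit=\phi\circ T^{-1}$ belong to $\Wp\loc(T(U))$, so their distributional gradients are well-defined, and the chain rule \eqref{eq-chain-rule} applies:
\[
\grad u(x)=dT^*(x)\grad\ut(\xi) \quad\text{and}\quad \grad\phi(x)=dT^*(x)\grad\phit(\xi),
\qquad \xi=T(x),
\]
for a.e.\ $x\in U$.

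The second step is the pointwise matrix algebra. Since $dT^*(x)$ is the transpose of $dT(x)$, we have $(dT^*(x)a)\cdot(dT^*(x)b)=a\cdot(dT(x)dT^*(x)b)$ for all $a,b\in\R^n$. Applying this with $a=\grad\ut(\xi)$ and $b=\grad\phit(\xi)$ gives, for a.e.\ $x\in U$ with $\xi=T(x)$,
\[
|\grad u(x)|^{p-2}\grad u(x)\cdot\grad\phi(x)
=|dT^*(x)\grad\ut(\xi)|^{p-2}\bigl(dT(x)dT^*(x)\grad\ut(\xi)\bigr)\cdot\grad\phit(\xi).
\]

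Finally I would invoke the change of variables formula. Because $u,\phi\in\Wp(U)$, Young's inequality shows that $|\grad u|^{p-2}\grad u\cdot\grad\phi\in L^1(U)$, so for any measurable $A\subset U$ the left-hand side of \eqref{eq-p-Lapl-to-B} is an absolutely convergent integral. Since $T\colon U\to T(U)$ is a $C^1$-diffeomorphism with $dx=|J_T(x)|^{-1}\,d\xi$ for $x=T^{-1}(\xi)$, substituting the previous display into $\int_A|\grad u|^{p-2}\grad u\cdot\grad\phi\,dx$ and changing variables produces exactly $\int_{T(A)}\B(\xi,\grad\ut)\cdot\grad\phit\,d\xi$ by the definition \eqref{eq-def-B} of $\B$. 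I do not anticipate a real obstacle: the only points that require a word of justification are the a.e.\ validity of \eqref{eq-chain-rule} for the Sobolev functions $u$ and $\ut$ — already recalled after \eqref{eq-chain-rule} via mollification — and the correct placement of the transpose and of the Jacobian factor $|J_T|^{-1}$, both of which are routine.
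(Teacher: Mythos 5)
Your proof is correct and follows essentially the same route as the paper: chain rule \eqref{eq-chain-rule}, the identity $(dT^*a)\cdot(dT^*b) = a\cdot(dT\,dT^*b)$, and the change of variables $dx = |J_T(x)|^{-1}\,d\xi$. The only cosmetic difference is ordering — you do the matrix algebra pointwise before changing variables, whereas the paper substitutes first and then rearranges inside the integral — and you add a few routine remarks (smoothness of $T^{-1}$ on $T(U)$, integrability via Young's inequality) that the paper leaves implicit.
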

Here, $J_T(x) = \det(dT(x))$ denotes the Jacobian of $T$ at $x$.

\begin{proof}
It will be convenient to use the above matrix notation. 
Rewrite the scalar product on the left-hand side 
of \eqref{eq-p-Lapl-to-B} as
\[
|\grad u|^{p-2} \grad u \cdot\grad \phi 
     =|\grad u|^{p-2}(\grad u)^* \grad \phi
\]
and apply the chain rule \eqref{eq-chain-rule}.
Using the change of variables $\xi=T(x)$, we obtain 
\begin{align*}
&\int_{A}|\grad u|^{p-2}(\grad u)^* \grad \phi \,dx \\
& \quad \quad \quad
= \int_{T(A)}|dT^*(x)\grad\ut|^{p-2} \bigl( dT^*(x)\grad\ut \bigr)^*
        \bigl( dT^*(x)\grad\phit \bigr) |J_T(x)|^{-1} \,d\xi \\
&\quad \quad \quad
=\int_{T(A)}|dT^*(x)\grad\ut|^{p-2}|J_T(x)|^{-1}  
       \bigl( dT(x)dT^*(x)\grad\ut \bigr)^*  \grad\phit \,d\xi\\
&\quad \quad \quad
=\int_{T(A)}\B(\xi,\grad\ut)\cdot\grad\phit\,d\xi. 
\end{align*}
Note that by the assumptions on $u$ and $\phi$, all the integrals
are finite.
\end{proof}

In view of the integral identity \eqref{eq-weak-sol-p-Lapl}, 
Lemma~\ref{lem-int-id-ball} indicates
that the \p-Laplace equation (\ref{eq-p-lap}) 
on $G\setm F$ will be transformed by $T$ into the equation
\begin{equation}   \label{eq-div-B}
\Div \B(\xi,\grad \ut(\xi))=0  \quad \text{on } T(G\setm F),
\end{equation}
with a proper interpretation of the function spaces and
the zero Neumann condition, which will be made precise later,
see Proposition~\ref{prop-Wp-Hp-loc-divB}, Theorem~\ref{thm-pLapl-after-B}
and Section~\ref{sec-existence}.

In the next section, we will study the operator \eqref{eq-div-B} in more detail.
For this, we will use the following  geometric lemma.
Its proof is rather straightforward, but requires good control of all the involved expressions.
We provide it for the reader's convenience.

Throughout the paper, unless otherwise stated, $C$ will denote any 
positive constant whose real value is not important and need 
not be the same at each point of use. 
It can even vary within a line. 
By $a\simle b$ we mean that there exists a nonnegative 
constant $C$, independent of $a$ and $b$, such that $a\le Cb$.
We also write $a\simeq b$ if $a\simle b\simle a$.

\begin{lem}   \label{lem-JB}
For all $x,y\in \R^n$, it holds that  
\begin{equation} \label{eq-Tx-Ty}
\frac{e^{-\ka \max\{x_n,y_n\}} |x-y|}{(1+|y'|^2)(\tfrac12+|x'|)+1/\ka} 
   \le |T(x)-T(y)|\le (5+2\ka) e^{-\ka \min\{x_n,y_n\}}|x-y|.
\end{equation}
In particular, if $|x'|\le M$ and  $q\in\R^n$ then 
\[
|dT^*(x)q|\simeq |dT(x)q|\simeq e^{-\ka x_n}|q|
\quad \text{and}\quad 
|J_{T}(x)|\simeq e^{-\ka nx_n},
\]
where the comparison constants in $\simeq$ depend on $\ka$ and $M$, but
are independent of $x$ and $q$.
 \end{lem}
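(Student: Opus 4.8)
The plan is to exploit the factorization $T(x)=e^{-\ka x_n}\Phi(x')$, where
\[
\Phi(x')=\frac{(2x',\,1-|x'|^2)}{1+|x'|^2}
\]
is inverse stereographic projection onto $\Sphere^{n-1}$; a one-line computation gives $|\Phi(x')|=1$ (which also recovers $|T(x)|=e^{-\ka x_n}$). Since $\Phi(x')$ and $\Phi(y')$ are unit vectors, expanding $|T(x)-T(y)|^2=|e^{-\ka x_n}\Phi(x')-e^{-\ka y_n}\Phi(y')|^2$ gives
\[
|T(x)-T(y)|^2=\bigl(e^{-\ka x_n}-e^{-\ka y_n}\bigr)^2+e^{-\ka(x_n+y_n)}|\Phi(x')-\Phi(y')|^2.
\]
I would then record the classical chordal-metric identity $|\Phi(x')-\Phi(y')|^2=4|x'-y'|^2/[(1+|x'|^2)(1+|y'|^2)]$, obtained by expanding $2-2\Phi(x')\cdot\Phi(y')$ and simplifying, which turns the above into the closed formula
\[
|T(x)-T(y)|^2=\bigl(e^{-\ka x_n}-e^{-\ka y_n}\bigr)^2+\frac{4\,e^{-\ka(x_n+y_n)}|x'-y'|^2}{(1+|x'|^2)(1+|y'|^2)};
\]
this identity is the backbone of the whole proof.

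Both bounds in \eqref{eq-Tx-Ty} then follow by elementary estimates. For the upper bound I would use $e^{-\ka(x_n+y_n)}\le e^{-2\ka\min\{x_n,y_n\}}$, $(1+|x'|^2)(1+|y'|^2)\ge1$, and the mean-value estimate $|e^{-\ka x_n}-e^{-\ka y_n}|\le\ka e^{-\ka\min\{x_n,y_n\}}|x_n-y_n|$ to bound the right-hand side by $e^{-2\ka\min\{x_n,y_n\}}(\ka^2|x_n-y_n|^2+4|x'-y'|^2)\le\max\{\ka^2,4\}\,e^{-2\ka\min\{x_n,y_n\}}|x-y|^2$; taking square roots and noting $\max\{\ka,2\}\le5+2\ka$ gives the claim. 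For the lower bound I would instead use $e^{-\ka(x_n+y_n)}\ge e^{-2\ka\max\{x_n,y_n\}}$ together with $|e^{-\ka x_n}-e^{-\ka y_n}|\ge\ka e^{-\ka\max\{x_n,y_n\}}|x_n-y_n|$, which reduces the claim to the purely algebraic inequality
\[
\ka^2|x_n-y_n|^2+\frac{4|x'-y'|^2}{(1+|x'|^2)(1+|y'|^2)}\ge\frac{|x-y|^2}{L^2},\qquad L:=(1+|y'|^2)\bigl(\tfrac12+|x'|\bigr)+\tfrac1\ka .
\]
This follows by checking the two coefficient inequalities $\ka^2\ge L^{-2}$ (immediate, since $L\ge1/\ka$) and $4/[(1+|x'|^2)(1+|y'|^2)]\ge L^{-2}$; the latter reduces, via $2L\ge(1+|y'|^2)(1+2|x'|)$ and $\sqrt{1+|y'|^2}\ge1$, to $(1+2|x'|)^2\ge1+|x'|^2$, i.e.\ $4|x'|+3|x'|^2\ge0$.

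For the ``in particular'' part I would differentiate the closed formula: putting $y=x+tq$ and letting $t\to0^+$ (legitimate since $T$ is smooth, so $|T(x+tq)-T(x)|^2=t^2|dT(x)q|^2+o(t^2)$) yields the exact identity
\[
|dT(x)q|^2=e^{-2\ka x_n}\Bigl(\ka^2q_n^2+\frac{4|q'|^2}{(1+|x'|^2)^2}\Bigr),
\]
so $dT^*(x)dT(x)=e^{-2\ka x_n}\,\mathrm{diag}\bigl(4(1+|x'|^2)^{-2},\dots,4(1+|x'|^2)^{-2},\ka^2\bigr)$. For $|x'|\le M$ the eigenvalues of this matrix all lie between $e^{-2\ka x_n}\min\{\ka^2,4(1+M^2)^{-2}\}$ and $e^{-2\ka x_n}\max\{\ka^2,4\}$, which gives $|dT(x)q|\simeq e^{-\ka x_n}|q|$; since $dT(x)dT^*(x)$ has the same eigenvalues, $|dT^*(x)q|\simeq e^{-\ka x_n}|q|$ with the same constants; and $|J_T(x)|^2=\det(dT^*(x)dT(x))=\ka^2\bigl(4(1+|x'|^2)^{-2}\bigr)^{n-1}e^{-2\ka nx_n}\simeq e^{-2\ka nx_n}$.

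The difficulty here is bookkeeping rather than any real idea: once the factorization $T=e^{-\ka x_n}\Phi$ and the chordal identity are in hand, everything collapses to one-variable calculus and a handful of quadratic inequalities. The only spot that requires genuine care is reproducing the precise (and asymmetric) denominator $(1+|y'|^2)(\tfrac12+|x'|)+1/\ka$ in the lower bound of \eqref{eq-Tx-Ty}, which is exactly what the estimates $2L\ge(1+|y'|^2)(1+2|x'|)$ and $\sqrt{1+|y'|^2}\ge1$ are designed to accommodate.
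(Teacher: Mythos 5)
Your proof is correct, and it takes a genuinely different and cleaner route than the paper's. The paper establishes the two bounds in \eqref{eq-Tx-Ty} by hand: for the upper bound it splits $|\xi'-\eta'|$ and $|\xi_n-\eta_n|$ into several pieces with the triangle inequality and estimates each piece separately; for the lower bound it goes back to the explicit inverse formulas \eqref{eq-x'-x-n-from-xi} and again estimates term by term. This requires quite a bit of bookkeeping and is essentially a one-way computation. Your approach instead factors $T(x)=e^{-\ka x_n}\Phi(x')$ with $\Phi$ the inverse stereographic projection onto the unit sphere; using $|\Phi|\equiv1$ and the chordal-metric identity turns $|T(x)-T(y)|^2$ into an exact closed formula, from which both the upper and the lower bound drop out by elementary scalar inequalities, and which moreover yields the diagonal structure of $dT^*(x)\,dT(x)$ (and hence all three comparisons in the ``in particular'' part) directly. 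The payoff of your route is conceptual transparency, an explicit formula for the metric tensor rather than just two-sided bounds, and even a slightly sharper constant $\max\{\ka,2\}\le5+2\ka$ in the upper estimate; the payoff of the paper's route is that it avoids invoking the spherical interpretation and stays entirely inside the coordinate expressions already displayed in \eqref{eq-def-xi-from-x} and \eqref{eq-x'-x-n-from-xi}, which keeps the exposition self-contained at the cost of more algebra. Both are valid proofs of the same statement.
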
 

\begin{proof}
Let $\xi=T(x)$ and $\eta=T(y)$. 
We can assume that $|y'|\le|x'|$.
By \eqref{eq-def-xi-from-x} and the triangle inequality,
\[
|\xi'-\eta'| \le \frac{2e^{-\ka x_n}|x'-y'|}{1+|x'|^2} 
    + |y'|e^{-\ka x_n} \biggl| \frac{2}{1+|x'|^2} - \frac{2}{1+|y'|^2} \biggr|
    + \frac{2|y'| \,|e^{-\ka x_n}-e^{-\ka y_n}|}{1+|y'|^2}  
\]
and
\[
|\xi_n-\eta_n| \le e^{-\ka x_n} 
    \biggl| \frac{1-|x'|^2}{1+|x'|^2} - \frac{1-|y'|^2}{1+|y'|^2} \biggr|
   + \frac{\bigl| 1-|y'|^2 \bigr|\, |e^{-\ka x_n} - e^{-\ka y_n}|}{1+|y'|^2}.
\]
In the above two estimates, we have $\bigl|1-|y'|^2\bigr|/(1+|y'|^2)\le1$,
\[
\biggl| \frac{1-|x'|^2}{1+|x'|^2} - \frac{1-|y'|^2}{1+|y'|^2} \biggr|
= \biggl| \frac{2}{1+|x'|^2} - \frac{2}{1+|y'|^2} \biggr|
\le \frac{2|x'-y'| (|x'|+|y'|)}{(1+|x'|^2)(1+|y'|^2)}
\]
and
\[
\frac{2|y'| (|x'|+|y'|)}{(1+|x'|^2)(1+|y'|^2)} 
\le \frac{2|x'|}{1+|x'|^2} \, \frac{2|y'|}{1+|y'|^2}\le 1.
\]
Since the mean-value theorem shows that
\[
|e^{-\ka x_n}-e^{-\ka y_n}| \le \ka e^{-\ka \min\{x_n,y_n\}} |x_n-y_n|,
\]
we conclude that
\[
|T(x)-T(y)| \le |\xi'-\eta'| + |\xi_n-\eta_n|
   \le e^{-\ka \min\{x_n,y_n\}} (2+1+\ka+2+\ka) |x-y|,
\]
which gives the second inequality in \eqref{eq-Tx-Ty}.
Conversely, \eqref{eq-x'-x-n-from-xi} and the triangle inequality yield
\[
|x'-y'| \le \biggl| \frac{\xi'}{|\xi|+\xi_n} - 
    \frac{\xi'}{|\eta|+\eta_n} \biggr| + \frac{|\xi'-\eta'|}{|\eta|+\eta_n} 
\le |\xi'| \frac{|\xi-\eta| + |\xi_n-\eta_n|}{(|\xi|+\xi_n)(|\eta|+\eta_n)} 
       + \frac{|\xi'-\eta'|}{|\eta|+\eta_n},
\]
where
\[
\frac{|\xi'|}{|\xi|+\xi_n} = |x'| \quad \text{and} \quad
\frac{1}{|\eta|+\eta_n} = \frac{1+|y'|^2}{2e^{-\ka y_n}},
\]
because of \eqref{eq-x'-x-n-from-xi} and \eqref{eq-xi+xi-n}.
Since also
\[
|x_n-y_n| = \frac{1}{\ka} \bigl| \log|\xi| - \log|\eta| \bigr|
    \le \frac{ \bigl| |\xi| - |\eta| \bigr|}{\ka \min\{|\xi|,|\eta|\}}
    \le \frac{ |\xi-\eta|}{\ka e^{-\ka\max\{x_n,y_n\}}},
\]
where the first inequality follows from the mean-value theorem, 
we conclude that 
\[
|x-y| \le |x'-y'| + |x_n-y_n| \le  
   \frac{(1+|y'|^2) (\tfrac12+|x'|) + 1/\ka}{e^{-\ka \max\{x_n,y_n\}}} |\xi-\eta|,
\]
which proves the first inequality in \eqref{eq-Tx-Ty}.

The estimates $|dT(x)q|\simeq e^{-\ka x_n}|q|$ and $|J_{T}(x)|\simeq e^{-\ka nx_n}$
follow directly from \eqref{eq-Tx-Ty} and the definition of the differential
$dT(x)$.
Hence also, 
\[
|dT^*(x)q|^2 = q^* dT(x) dT^*(x)q \le |q|\, |dT(x) dT^*(x)q| 
  \simeq e^{-\ka x_n} |q| \, |dT(x)^*q|
\]
and
\[
|q|^2 = q^*q = (dT(x)^{-1}q)^*  dT^*(x)q \le |dT(x)^{-1}q|\, |dT^*(x)q| 
  \simeq e^{\ka x_n} |q| \,|dT^*(x)q|.
\]
Dividing by $|dT^*(x)q|$ and $|q|$, respectively, finishes the proof.
\end{proof}

\section{Properties of the operator $\Div \B(\xi,\grad \ut)$}
\label{sect-pro-ope-B}

We shall now study some properties of the operator 
$\Div \B(\xi,\grad \ut)$ on $T(G)$.
It will turn out to be degenerate elliptic 
with a degeneracy given by the weight function
\[
\wt(\xi)=|\xi|^{p-n}, \quad \xi\in\R^n\setm\{0\},
\]
and $\wt(0)=0$.
This will make it possible to treat equation \eqref{eq-div-B}
using methods from Heinonen--Kilpel\"ainen--Martio~\cite{HKM}.

\begin{thm}    \label{thm-B-ellipt} 
The mapping $\B:T(G)\times \R^n\to \R^n$, defined by \eqref{eq-def-B}, 
satisfies the ellipticity conditions
\begin{align*}
\B(\xi,q)\cdot q \simeq \wt(\xi)|q|^p \quad 
&\text{for all } q\in\R^n\text{ and }  \xi\in T(G),  \\ 
|\B(\xi,q)| \simeq \wt(\xi) |q|^{p-1} \quad 
&\text{for all } q\in\R^n \text{ and } \xi\in T(G), 
\end{align*}
where the comparison constants are independent of $\xi$ and $q$.
\end{thm}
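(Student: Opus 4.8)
The plan is to deduce both ellipticity estimates directly from the pointwise comparisons in Lemma~\ref{lem-JB}. Fix $\xi\in T(G)$ and write $\xi=T(x)$ with $x\in G$, so that $|x'|<1$; thus Lemma~\ref{lem-JB} applies with $M=1$, and the resulting comparison constants depend only on $\ka$. The key observation is that $e^{-\ka x_n}=|T(x)|=|\xi|$, so every factor $e^{-\ka x_n}$ produced by Lemma~\ref{lem-JB} is a power of $|\xi|$; the weight $\wt(\xi)=|\xi|^{p-n}$ will emerge once these powers are collected. We may assume $q\neq0$, the case $q=0$ being trivial, and then $dT^*(x)q\neq0$ since $dT(x)$ is invertible on $T(G)$.

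For the first estimate I would start from the definition \eqref{eq-def-B} and use $(dT(x)dT^*(x)q)\cdot q=q^*dT(x)dT^*(x)q=|dT^*(x)q|^2$ to write
\[
\B(\xi,q)\cdot q=|dT^*(x)q|^{p-2}|J_T(x)|^{-1}|dT^*(x)q|^2=|dT^*(x)q|^{p}|J_T(x)|^{-1}.
\]
Lemma~\ref{lem-JB} then gives $|dT^*(x)q|^p\simeq(e^{-\ka x_n}|q|)^p=|\xi|^p|q|^p$ and $|J_T(x)|^{-1}\simeq e^{\ka nx_n}=|\xi|^{-n}$, whence $\B(\xi,q)\cdot q\simeq|\xi|^{p-n}|q|^p=\wt(\xi)|q|^p$.

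For the second estimate the only additional point is to control the vector $dT(x)dT^*(x)q$. Applying the bound $|dT(x)h|\simeq e^{-\ka x_n}|h|$ of Lemma~\ref{lem-JB} with $h=dT^*(x)q$, and then $|dT^*(x)q|\simeq e^{-\ka x_n}|q|$ once more, yields $|dT(x)dT^*(x)q|\simeq e^{-2\ka x_n}|q|=|\xi|^2|q|$. Combining this with $|dT^*(x)q|^{p-2}\simeq|\xi|^{p-2}|q|^{p-2}$ and $|J_T(x)|^{-1}\simeq|\xi|^{-n}$ gives
\[
|\B(\xi,q)|=|dT^*(x)q|^{p-2}|J_T(x)|^{-1}|dT(x)dT^*(x)q|\simeq|\xi|^{p-2}|q|^{p-2}\cdot|\xi|^{-n}\cdot|\xi|^2|q|=\wt(\xi)|q|^{p-1}.
\]

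Finally I would note that all comparison constants originate from Lemma~\ref{lem-JB} with $M=1$ together with elementary manipulations with the fixed exponent $p$ (for $1<p<2$ the map $t\mapsto t^{p-2}$ is monotone, so $a\simeq b$ with $b>0$ still gives $a^{p-2}\simeq b^{p-2}$), hence they are independent of $\xi$ and $q$, as claimed. The argument is essentially bookkeeping; the only thing to watch is that Lemma~\ref{lem-JB} must be invoked not only for $q$ but also for the vector $dT^*(x)q$ when estimating $dT(x)dT^*(x)q$, and that the powers of $e^{-\ka x_n}=|\xi|$ combine to exactly $p-n$ in both cases — so I would not expect any genuine obstacle here.
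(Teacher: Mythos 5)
Your proof is correct and follows essentially the same route as the paper's: both reduce $\B(\xi,q)\cdot q$ to $|dT^*(x)q|^p|J_T(x)|^{-1}$ via the identity $(dT\,dT^*q)^*q=|dT^*q|^2$, and both estimate $|dT\,dT^*q|$ by applying Lemma~\ref{lem-JB} twice (first with $h=dT^*(x)q$, then with $q$). The only differences are cosmetic: you explicitly set aside the case $q=0$ and note the sign issue in $t\mapsto t^{p-2}$ for $1<p<2$, both of which are harmless refinements the paper leaves implicit.
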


\begin{proof}[Proof of Theorem~\ref{thm-B-ellipt}]
Using the above matrix notation and \eqref{eq-def-B}, 
we have for all $q\in\R^n$ and $\xi=Tx\in T(G)$,
\begin{align}
\B(\xi,q)\cdot q &=|dT^*(x)q|^{p-2} |J_T(x)|^{-1} 
          \bigl( dT(x)dT^*(x)q \bigr)^* q 
\nonumber \\
&=|dT^*(x)q|^{p-2} |J_T(x)|^{-1} |dT^*(x)q|^2.  \label{eq-B-times-q}
\end{align}
Lemma~\ref{lem-JB} then gives
\[
\B(\xi,q)\cdot q \simeq e^{-\ka(p-n) x_n}|q|^p= |\xi|^{p-n}|q|^p, 
\]
which concludes the proof of the first statement.
For the second statement, note that by Lemma~\ref{lem-JB}, we have
\[
|dT(x)dT^*(x)q| \simeq e^{-\ka x_n}|dT^*(x)q|\simeq e^{-2\ka x_n}|q|.
\]
Thus by Lemma~\ref{lem-JB} together with \eqref{eq-def-B}, we get
\begin{align*}
|\B(\xi,q)| &= |dT^*(x)q|^{p-2}|J_T(x)|^{-1}|dT(x)dT^*(x)q|\\
&\simeq  e^{-\ka(p-n)x_n}|q|^{p-1} = |\xi|^{p-n}|q|^{p-1}.\qedhere
\end{align*}
\end{proof}

\begin{thm}    \label{thm-B-monot} 
The mapping $\B:T(G)\times \R^n\to \R^n$, defined by \eqref{eq-def-B}, 
satisfies for all $\xi\in T(G)$ and $q_1,q_2\in\R^n$
the  monotonicity condition
\begin{equation}  \label{eq-B-monot}
(\B(\xi,q_1)-\B(\xi,q_2)) \cdot (q_1-q_2) \ge 0, \quad 
\end{equation}
with equality if and only if $q_1=q_2$.
\end{thm}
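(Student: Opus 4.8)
The plan is to peel off the (fixed, invertible) linear map coming from $dT$ and reduce \eqref{eq-B-monot} to the classical strict monotonicity of the vector field $y\mapsto|y|^{p-2}y$ on $\R^n$. Fix $\xi\in T(G)$ and put $x=T^{-1}(\xi)$. Since $T$ is a smooth diffeomorphism onto $T(\R^n)$, the matrix $M:=dT(x)$ is invertible; set $A:=dT^*(x)=M^*$ (also invertible) and $c:=|J_T(x)|^{-1}>0$. Because $dT(x)\,dT^*(x)=MM^*=A^*A$, definition \eqref{eq-def-B} reads
\[
\B(\xi,q)=c\,|Aq|^{p-2}\,A^*Aq .
\]

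Next I would introduce $y_i:=Aq_i$ for $i=1,2$ and use the adjoint relation $(A^*Aq_i)\cdot(q_1-q_2)=(Aq_i)\cdot A(q_1-q_2)=y_i\cdot(y_1-y_2)$ to compute
\[
(\B(\xi,q_1)-\B(\xi,q_2))\cdot(q_1-q_2)
= c\,\bigl(|y_1|^{p-2}y_1-|y_2|^{p-2}y_2\bigr)\cdot(y_1-y_2).
\]
Since $c>0$ and $A$ is a bijection, both \eqref{eq-B-monot} and the equality statement will follow once I know that $\bigl(|y_1|^{p-2}y_1-|y_2|^{p-2}y_2\bigr)\cdot(y_1-y_2)\ge0$ with equality precisely when $y_1=y_2$.

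For that elementary fact I would expand the left-hand side as $|y_1|^p+|y_2|^p-(|y_1|^{p-2}+|y_2|^{p-2})\,y_1\cdot y_2$, bound $y_1\cdot y_2\le|y_1|\,|y_2|$ by Cauchy--Schwarz, and rearrange into $(|y_1|^{p-1}-|y_2|^{p-1})(|y_1|-|y_2|)\ge0$, which is nonnegative because $t\mapsto t^{p-1}$ is increasing on $[0,\infty)$ for $p>1$. For the equality case I would first split off the trivial situation $y_1=0$ or $y_2=0$ (then the left-hand side equals $|y_1|^p$ or $|y_2|^p$, forcing both to vanish); when $y_1,y_2\neq0$, equality forces first $|y_1|=|y_2|=:r>0$ and then, via equality in Cauchy--Schwarz, $y_1\cdot y_2=r^2$, so that $|y_1-y_2|^2=2r^2-2\,y_1\cdot y_2=0$. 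The converse implication is immediate.

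There is no real obstacle here; the only point needing a little care is the equality discussion when $1<p<2$, where $|y|^{p-2}$ blows up at the origin, which is why the cases $y_i=0$ are treated separately rather than by a single factorization. Everything else is bookkeeping with the invertible map $A=dT^*(x)$ and the positive scalar $c=|J_T(x)|^{-1}$, both available from the diffeomorphism property of $T$ established above.
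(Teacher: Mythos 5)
Your proof is correct. The underlying mechanism is the same as in the paper — peel off the invertible linear map $dT^{*}(x)$ and then prove a scalar inequality via Cauchy--Schwarz — but you organize it differently. You make the change of variables $y_{i}=dT^{*}(x)q_{i}$ explicit and reduce the claim to the classical strict monotonicity of $y\mapsto|y|^{p-2}y$, which you then verify by the factorization
\[
|y_{1}|^{p}+|y_{2}|^{p}-\bigl(|y_{1}|^{p-2}+|y_{2}|^{p-2}\bigr)|y_{1}|\,|y_{2}|
=\bigl(|y_{1}|^{p-1}-|y_{2}|^{p-1}\bigr)\bigl(|y_{1}|-|y_{2}|\bigr)\ge 0,
\]
together with the monotonicity of $t\mapsto t^{p-1}$. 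The paper instead writes the left-hand side of \eqref{eq-B-monot} directly as $A_{1}-A_{2}$, computes $A_{1}$ from \eqref{eq-B-times-q}, and bounds $|A_{2}|$ by first applying Cauchy--Schwarz and then Young's inequality to conclude $A_{1}-|A_{2}|\ge 0$; the reduction to the standard $p$-Laplacian field is never made explicit. The two scalar steps are essentially equivalent (Young's inequality $s^{p-1}t\le\frac{p-1}{p}s^{p}+\frac{1}{p}t^{p}$ versus your factorization both prove $s^{p}+t^{p}\ge s^{p-1}t+st^{p-1}$), and both approaches handle the equality case by tracing back through Cauchy--Schwarz and the scalar estimate. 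Your version is more modular and makes the connection to the well-known $p$-Laplacian monotonicity transparent; the paper's is more self-contained and hands-on. One small point you handled correctly and that deserves the care you gave it: the factor $|y_{i}|^{p-2}$ is singular at $y_{i}=0$ when $1<p<2$, so the cases $y_{i}=0$ must indeed be treated separately before using the expanded form, exactly as you note.
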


\begin{proof}
Expand the left-hand side of \eqref{eq-B-monot} as
\begin{equation}  \label{e6}
(\B(\xi,q_1)-\B(\xi,q_2))\cdot(q_1-q_2) =: A_1-A_2 \ge A_1 -|A_2|, 
\end{equation}
where 
\begin{align*}
A_1 &=\B(\xi,q_1)\cdot q_1+ \B(\xi,q_2)\cdot q_2, \\
A_2 &=\B(\xi,q_1)\cdot q_2+\B(\xi,q_2)\cdot q_1. 
\end{align*}
Using \eqref{eq-B-times-q}, we have
\begin{equation*}  
A_1  =a \bigl( |dT^*(x)q_1|^p+|dT^*(x)q_2|^p \bigr),
\end{equation*} 
where $a=|J_T(x)|^{-1}$.
Estimating the first term of $A_2$ using \eqref{eq-def-B}, 
together with the Cauchy--Schwarz and Young inequalities, yields
\begin{align*}
|\B(\xi,q_1)\cdot q_2|&= a|dT^*(x)q_1|^{p-2}|q^*_2dT(x)dT^*(x)q_1| \\
&\leq  a|dT^*(x)q_1|^{p-1}|dT^*(x)q_2|\\  
&\leq a\biggl(\frac{p-1}{p}|dT^*(x)q_1|^{p}+\frac{1}{p}|dT^*(x)q_2|^p\biggr).
\end{align*}
Similarly, with the roles of $q_1$ and $q_2$ interchanged, 
the second term in $A_2$ is estimated as 
\begin{equation*}  
|\B(\xi,q_2)\cdot q_1| 
\leq a\biggl(\frac{p-1}{p}|dT^*(x)q_2|^{p}+\frac{1}{p}|dT^*(x)q_1|^p\biggr).
\end{equation*}
Substituting the last three estimates back into (\ref{e6}) reveals that 
\begin{equation*}   
(\B(\xi,q_1)-\B(\xi,q_2))\cdot(q_1-q_2)\geq 0
\end{equation*}
for all $q_1,q_2\in \R^n.$
We notice that the left-hand side is zero if and only if equality holds 
both in the Cauchy--Schwarz and Young inequalities, from which it 
is easily concluded that this requires $dT^*(x)q_1=dT^*(x)q_2$ and thus
$q_1=q_2$, since $dT^*(x)$ is invertible.
\end{proof}

Theorems~\ref{thm-B-ellipt} and~\ref{thm-B-monot}
show that the ellipticity and monotonicity assumptions
(3.4)--(3.6) in Heinonen--Kil\-pe\-l\"ai\-nen--Martio~\cite{HKM} are satisfied
for $\B$ with the weight $\wt(\xi)=|\xi|^{p-n}$.
Moreover, $\B$ is clearly measurable in $\xi$ and continuous in $q$, so 
(3.3) in~\cite{HKM} holds as well. 
The homogeneity condition~(3.7) in~\cite{HKM} is also obviously satisfied.

The following lemma is well-known, cf.\ 
Heinonen--Kilpel\"ainen--Martio~\cite[p.\ 298]{HKM}.
For the reader's convenience, we include the short proof.
Here and in the rest of the paper, we let 
\[
B_r = B(0,r)= \{\xi \in \R^n: |\xi|<r\}
\] 
denote the open ball centred at the origin and with radius $r>0$.

\begin{lem}\label{lem-w-Ap}
Let $r>0$ and $\al\in\R$. Then
\begin{equation}   \label{eq-int-wt-al}
\int_{B_r} \wt(\xi)^\al \,d\xi = 
\begin{cases}
C_{\al,p,n}r^{n+\al(p-n)} &\text{if } \al(n-p)<n, \\
\infty &\text{otherwise,} \end{cases}
\end{equation}
where 
\[
C_{\al,p,n}= \frac{n\om_n}{n+\al(p-n)}
\]
and $\om_n$ is the Lebesgue measure of the unit ball in $\R^n$.

Moreover, $\wt$ belongs to the Muckenhoupt class $A_p$,
that is, for all balls $B\subset\R^n$, 
\begin{equation}  \label{eq-Ap-for-wt}
\biggl(\int_B \wt(\xi)\,d\xi \biggr) 
   \biggl( \int_B \wt(\xi)^{1/(1-p)}\,d\xi \biggr)^{p-1}  \simle |B|^p,
\end{equation}
where $|B|$ stands for the Lebesgue measure of $B$.
\end{lem}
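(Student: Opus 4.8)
The plan is to reduce everything to the explicit integral computation in \eqref{eq-int-wt-al}, and then to a standard convexity/optimization trick for the $A_p$ bound. First I would fix $r>0$ and $\al\in\R$ and compute $\int_{B_r}|\xi|^{\al(p-n)}\,d\xi$ directly in polar coordinates: writing $d\xi = n\om_n \rho^{n-1}\,d\rho$ on $B_r$, the integral becomes $n\om_n\int_0^r \rho^{n-1+\al(p-n)}\,d\rho$, which converges precisely when $n-1+\al(p-n)>-1$, i.e. when $n+\al(p-n)>0$, equivalently $\al(n-p)<n$; in that case it equals $n\om_n r^{n+\al(p-n)}/(n+\al(p-n)) = C_{\al,p,n}r^{n+\al(p-n)}$, and otherwise it diverges. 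This is the whole content of the first displayed formula, since $\wt(\xi)^\al = |\xi|^{\al(p-n)}$ by definition of $\wt$.

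For the Muckenhoupt condition \eqref{eq-Ap-for-wt}, I would first reduce to balls centred at the origin. Indeed, for an arbitrary ball $B=B(\xi_0,R)$, if $0\notin 2B$ then $\wt$ is comparable to a constant on $B$ and \eqref{eq-Ap-for-wt} is trivial; if $0\in 2B$ then $B\subset B_{3R}$ and also $B_{R}\subset$ a fixed dilate contained appropriately, so by the doubling nature of the power weight it suffices to check \eqref{eq-Ap-for-wt} for $B=B_r$ up to a fixed multiplicative constant. (One must note here that $\wt\in L^1\loc$, i.e. $p-n>-n$, which holds since $p>0$, so $\int_{B_r}\wt<\infty$; similarly $\wt^{1/(1-p)}=|\xi|^{(n-p)/(p-1)}$ is locally integrable because $(n-p)/(p-1)>-n$ is equivalent to $n(p-1)+n-p = n p - p = p(n-1) \ge 0$ — wait, more simply $(n-p)/(p-1) > -n \iff n-p > -n(p-1) \iff n-p+np-n>0 \iff np-p>0 \iff p(n-1)>0$, true for $n\ge2$.) Then applying \eqref{eq-int-wt-al} with $\al=1$ and with $\al=1/(1-p)$ gives, on $B_r$,
\[
\biggl(\int_{B_r}\wt\,d\xi\biggr)\biggl(\int_{B_r}\wt^{1/(1-p)}\,d\xi\biggr)^{p-1}
= C_{1,p,n}\, C_{1/(1-p),p,n}^{\,p-1}\; r^{n+(p-n)}\, r^{(p-1)(n+\frac{1}{1-p}(p-n))}.
\]
The exponent of $r$ simplifies: $n+(p-n) = p$, and $(p-1)\bigl(n + \tfrac{p-n}{1-p}\bigr) = (p-1)n - (p-n) = pn - n - p + n = p(n-1)$, so the product equals a constant times $r^{p + p(n-1)} = r^{pn} = |B_r|^p$ up to the fixed constant $\om_n^{-p}$ from $|B_r|=\om_n r^n$. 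This gives \eqref{eq-Ap-for-wt} for centred balls with equality up to a constant, and hence for all balls by the reduction above.

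The only genuinely delicate point is the reduction from arbitrary balls to centred balls: one has to handle the case $0\in 2B\setm B$ and the case where $B$ itself contains $0$ in a unified way, using that $|\xi|^s$ is essentially constant (comparable from above and below) on any ball not containing the origin in a fixed dilate, and that for balls containing the origin one may replace $B$ by the concentric-at-origin ball $B_{2R}$ with comparable measure and comparable weight integrals. All the constants produced depend only on $p,n$ (and the dilation factor), so $\wt\in A_p$. I expect this geometric reduction, rather than the explicit integral, to be the part requiring the most care, though it is entirely standard for radial power weights.
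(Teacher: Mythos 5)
Your proposal is correct and follows essentially the same route as the paper: compute the integral in polar coordinates for \eqref{eq-int-wt-al}, then split the $A_p$ verification into the case where the ball is far from the origin (so $\wt$ is comparable to a constant on it) and the case where the ball is near the origin (so it sits inside a centred ball of comparable radius, and \eqref{eq-int-wt-al} applies with $\al=1$ and $\al=1/(1-p)$). Your case split ``$0\in 2B$'' vs.\ ``$0\notin 2B$'' is the same as the paper's ``$r\ge\tfrac12|\zeta|$'' vs.\ ``$r<\tfrac12|\zeta|$'' for $B=B(\zeta,r)$; the only minor difference is that you phrase the near-origin case as a two-sided comparison with a centred ball, whereas the paper uses just the one-sided monotonicity $\int_B\le\int_{B_{3r}}$, which is a touch simpler since only an upper bound is needed.
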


\begin{proof}
Estimate \eqref{eq-int-wt-al} is easily obtained by direct calculation
using spherical coordinates.
To prove \eqref{eq-Ap-for-wt}, we let $B=B(\zeta,r)$ be a ball
and consider two cases: 

If $r<\tfrac12|\zeta|$, then $\wt(\xi) \simeq \wt(\zeta)$ 
for all $\xi\in B$ 
and hence the left-hand side in \eqref{eq-Ap-for-wt} is comparable to
\[
(\wt(\zeta) |B|) \bigl( \wt(\zeta)^{1/(1-p)}|B| \bigr)^{p-1} = |B|^p.
\]
On the other hand, if $r\ge\tfrac12|\zeta|$, then $B\subset B_{3r}$
and hence, by the first part of the lemma with $\al=1$ and $\al=1/(1-p)$,
\begin{align*}
\biggl( \int_B \wt(\xi)\,d\xi \biggr) 
   \biggl( \int_B \wt(\xi)^{1/(1-p)}\,d\xi \biggr)^{p-1} 
&\le \biggl(\int_{B_{3r}} \wt(\xi)\,d\xi \biggr) 
   \biggl( \int_{B_{3r}} \wt(\xi)^{1/(1-p)}\,d\xi \biggr)^{p-1} \\
   &\simeq (3r)^p \bigl((3r)^{n+(p-n)/(1-p)}\bigr)^{p-1} \\
   &\simeq r^{np}. 
\end{align*}
Note that $\al(n-p)<n$ for both choices of $\al$.\qedhere
\end{proof}

Weights from the Muckenhoupt class $A_p$ are known to be \p-admissible,
i.e.\ the measure $d\mu(\xi)=\wt(\xi)\,d\xi$ is doubling and supports
a \p-Poincar\'e inequality on $\R^n$, see 
Heinonen--Kil\-pe\-l\"ai\-nen--Martio~\cite[Chapters~15 and~20]{HKM}.
Such measures are suitable for the theory of
Sobolev spaces and partial differential equations, as developed in \cite{HKM}.

\begin{defi}
For an open set $\Om\subset\R^n$, the weighted Sobolev space 
$\Hp_0(\Om,\wt)$ is the completion of $C_0^\infty(\Om)$ in the norm
\[
\|u\|_{\Hp(\Om,\wt)} =\biggl(\int_{\Om} \bigl( |u(\xi)|^p
           +|\nabla u(\xi)|^p \bigr) \wt(\xi)\,d\xi \biggr)^{1/p}.
\]
Similarly, $H^{1,p}(\Om,\wt)$ is the completion of the set
\[
\{\phi\in C^{\infty}(\Om): \|\phi\|_{\Hp(\Om,\wt)} <\infty\}
\] 
in the $\Hp(\Om,\wt)$-norm.
\end{defi}

In other words, a function $u$ belongs to $\Hp(\Om,\wt)$ if and only if 
$u \in L^p(\Om,\wt)$ and there is a vector-valued function $v$
such that for some sequence of smooth functions $\phi_k\in C^{\infty}(\Om)$ 
with $\|\phi_k\|_{\Hp(\Om,\wt)}<\infty$, we have
\[
\int_{\Om} |\phi_k-u|^p \wt\,d\xi \to 0
\quad \text{and} \quad 
\int_{\Om} |\grad \phi_k - v|^p \wt\,d\xi \to 0,
\quad \text{as } k\to\infty.
\]

Since $\wt^{1/(1-p)}\in L^p\loc(\R^n,dx)$, we know from \cite[Section~1.9]{HKM}
that $v= \grad u$ is the distributional gradient of $u$.
Moreover, by Kilpel\"ainen~\cite{TK94}, $u\in \Hp(\Om,\wt)$ 
if and only if both $u$ and its distributional gradient $\grad u$ belong
to $L^p(\Om,\wt)$.
For the unweighted Sobolev space with $\wt\equiv1$, 
we use the notation $\Wp(\Om)$.

\begin{defi}
Following \cite[Chapter~3]{HKM}, we say that a function $u\in \Hp\loc(\Om,\wt)$ 
is a \emph{weak solution} of the equation $\Div\B(\xi,\grad u(\xi))=0$
in $\Om$ if for all test functions $\phi\in C_0^\infty(\Om)$, 
\[ 
\int_{\Omega}\B(\xi,\grad u(\xi)) \cdot \grad \phi(\xi)\, d\xi=0.
\] 
\end {defi}

We can now make more precise the statement that the \p-Laplace equation
on $G\setm F$ transforms into the equation \eqref{eq-div-B}.
First, we formulate the following simple consequence of the estimates in
Lemma~\ref{lem-JB},
which will also be useful later when dealing with function spaces 
on $G$ and $T(G)$, and when comparing capacities.

\begin{lem}         \label{lem-int-G-T}
Assume that $u\in L^1\loc(U)$ with the distributional
gradient $\grad u \in L^1\loc(U)$ for some open set 
$U\subset B'(0,R)\times\R$ and let $\ut=u\circ T^{-1}$.  
Then for any measurable set $A\subset U$,
\begin{align*}   
\int_{A}|\grad u|^p\,dx 
  &\simeq \int_{T(A)}|\grad \ut|^p \wt(\xi)\,d\xi, \\
\int_{A}|u|^p e^{-p\ka x_n}\,dx
  &\simeq \int_{T(A)}|\ut|^p \wt(\xi)\,d\xi, \nonumber 
\end{align*}
with comparison constants depending on $R$ but independent of $A$ and $u$.
\end{lem}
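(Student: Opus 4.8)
The plan is to reduce both claimed comparisons to the pointwise estimates already established in Lemma~\ref{lem-JB}, combined with the change-of-variables formula for the integral under the diffeomorphism $T$. First I would recall that since $U\subset B'(0,R)\times\R$, every point $x\in U$ has $|x'|<R$, so Lemma~\ref{lem-JB} applies with $M=R$ and gives, for a.e.\ $x\in U$ and with $q=\grad u(x)$,
\[
|dT^*(x)\grad u(x)| \simeq e^{-\ka x_n}|\grad u(x)|
\quad\text{and}\quad
|J_T(x)| \simeq e^{-\ka n x_n},
\]
with comparison constants depending only on $\ka$ and $R$. The chain rule \eqref{eq-chain-rule} says $\grad\ut(\xi)=\bigl(dT^*(x)\bigr)^{-1}\grad u(x)$ where $\xi=T(x)$; equivalently $\grad u(x)=dT^*(x)\grad\ut(\xi)$, which holds a.e.\ by the mollification argument cited after \eqref{eq-chain-rule}. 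Thus $|\grad u(x)| \simeq e^{-\ka x_n}|\grad\ut(T(x))|$, and therefore
\[
|\grad u(x)|^p \simeq e^{-p\ka x_n}|\grad\ut(T(x))|^p.
\]

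Next I would apply the change of variables $\xi=T(x)$ to the right-hand side of the asserted first comparison. Since $|\xi|=e^{-\ka x_n}$ by the displayed identity just after \eqref{eq-x'-x-n-from-xi}, we have $\wt(\xi)=|\xi|^{p-n}=e^{-\ka(p-n)x_n}$, and the Jacobian factor is $d\xi=|J_T(x)|\,dx\simeq e^{-\ka n x_n}\,dx$. Hence
\[
\int_{T(A)}|\grad\ut(\xi)|^p\wt(\xi)\,d\xi
= \int_{A}|\grad\ut(T(x))|^p e^{-\ka(p-n)x_n}\,|J_T(x)|\,dx
\simeq \int_{A}|\grad\ut(T(x))|^p e^{-\ka(p-n)x_n}e^{-\ka n x_n}\,dx,
\]
and the exponent combines to $e^{-p\ka x_n}$, so this is $\simeq\int_A |\grad\ut(T(x))|^p e^{-p\ka x_n}\,dx \simeq \int_A |\grad u(x)|^p\,dx$ by the pointwise estimate above. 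This proves the first line. For the second line the computation is even shorter: by the change of variables and $\wt(\xi)=e^{-\ka(p-n)x_n}$, $|J_T(x)|\simeq e^{-\ka n x_n}$,
\[
\int_{T(A)}|\ut(\xi)|^p\wt(\xi)\,d\xi \simeq \int_A |u(x)|^p e^{-\ka(p-n)x_n}e^{-\ka n x_n}\,dx = \int_A |u(x)|^p e^{-p\ka x_n}\,dx,
\]
since $\ut(T(x))=u(x)$ and no chain rule is needed. One should note that the integrability hypotheses $u,\grad u\in L^1\loc(U)$ ensure all these integrals make sense, and that $T$ is a smooth diffeomorphism on $U$ (as remarked after \eqref{eq-xi+xi-n}) so the change-of-variables formula is valid.

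The only mild subtlety — and the place I would be most careful — is bookkeeping the exponents: one must check that the power of $|\xi|$ coming from the weight, the power of $e^{-\ka x_n}$ coming from the Jacobian $|J_T|$, and (for the gradient identity) the power coming from $|dT^*(x)\grad\ut|$ all add up correctly, namely $(p-n)+n=p$ with the weight alone for the function estimate, and $(p-2)$ from $|dT^*(x)q|^{p-2}$ combined appropriately in the operator setting; here though we only need the clean statement $|\grad u|\simeq e^{-\ka x_n}|\grad\ut|$ and $|J_T|\simeq e^{-\ka n x_n}$, so the arithmetic is just $-p\ka x_n$ on one side versus $-(p-n)\ka x_n$ (weight) $-n\ka x_n$ (Jacobian) on the other, which match. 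There is no genuine obstacle; the lemma is a direct corollary of Lemma~\ref{lem-JB} and the change-of-variables theorem, which is presumably why the paper states it as a ``simple consequence.''
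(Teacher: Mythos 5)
Your proof is correct and follows essentially the same route as the paper: apply the chain rule $\grad u(x)=dT^*(x)\grad\ut(T(x))$, invoke the pointwise estimates $|dT^*(x)q|\simeq e^{-\ka x_n}|q|$ and $|J_T(x)|\simeq e^{-\ka n x_n}$ from Lemma~\ref{lem-JB} (valid since $|x'|<R$), and use the change of variables $\xi=T(x)$, noting $|\xi|=e^{-\ka x_n}$. The only cosmetic difference is that you transform $\int_{T(A)}$ back to $\int_A$ while the paper goes from $\int_A$ to $\int_{T(A)}$; the exponent bookkeeping you highlight is exactly the point, and you carry it out correctly.
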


Note that, in general, the above integrals can be infinite, but then
they are infinite simultaneously.

\begin{proof}
As in Lemma~\ref{lem-int-id-ball}, we use the change of variables $\xi=T(x)$.
The chain rule \eqref{eq-chain-rule}, together with
Lemma~\ref{lem-JB} and $e^{-\ka x_n}=|\xi|$, implies that 
\[
\int_A |\grad u|^p \,dx 
=\int_{T(A)} |dT^*(x)\grad u|^p |J_T(x)|^{-1}\,d\xi
\simeq \int_{T(A)}|\grad \ut|^p|\xi|^{p-n}\,d\xi
\]
and similarly,
\[
\int_A |u|^p e^{-p\ka x_n}\,dx =\int_{T(A)}|\ut|^p |\xi|^p |J_T(x)|^{-1}\,d\xi
\simeq \int_{T(A)} |\ut|^p|\xi|^{p-n}\,d\xi.\qedhere
\]
\end{proof}

\begin{prop}  \label{prop-Wp-Hp-loc-divB}
A function $u\in \Wp\loc(G\setm F)$ is a weak solution 
of the \p-Laplace equation $\Delta_p u=0$  
in $G\setm F$ if and only if $\ut=u\circ T^{-1}$ is a
weak solution of the equation
$\Div\B(\xi,\grad \ut(\xi))=0$ in $T(G\setm F)$.
\end{prop}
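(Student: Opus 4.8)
The plan is to transfer the two defining integral identities into one another through the diffeomorphism $T$, so that the proposition reduces to Lemma~\ref{lem-int-id-ball} together with the facts that $T$ respects the relevant local function spaces and the test-function classes.

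The first step is to check that $u\in\Wploc(G\setm F)$ if and only if $\ut\in\Hp\loc(T(G\setm F),\wt)$, so that the two equations are simultaneously meaningful. Since $T$ restricts to a smooth diffeomorphism of $\clG$ onto $T(\clG)$, the assignment $V\mapsto T(V)$ is a bijection between the open sets $V\Subset G\setm F$ and the open sets $W\Subset T(G\setm F)$; moreover each such $T(V)$ stays bounded away from the origin $\xi=0$, so the weight $\wt(\xi)=|\xi|^{p-n}$ is bounded above and below on it. For such $V$ (which satisfy $V\subset B'(0,1)\times\R$) Lemma~\ref{lem-int-G-T}, combined with the chain rule~\eqref{eq-chain-rule}, gives $u\in\Wp(V)$ if and only if $\ut\in\Hp(T(V),\wt)$, and letting $V$ exhaust $G\setm F$ yields the desired equivalence of the local spaces.

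The second step is the identity itself. Since $T$ and $T^{-1}$ preserve compact supports, $\phi\mapsto\phit=\phi\circ T^{-1}$ is a bijection of $C_0^\infty(G\setm F)$ onto $C_0^\infty(T(G\setm F))$. Given $\phi\in C_0^\infty(G\setm F)$, I would choose a bounded open set $U$ with $\spt\phi\Subset U\Subset G\setm F$; then $u,\phi\in\Wp(U)$, so Lemma~\ref{lem-int-id-ball} with $A=U$ gives $\int_U|\grad u|^{p-2}\grad u\cdot\grad\phi\,dx=\int_{T(U)}\B(\xi,\grad\ut)\cdot\grad\phit\,d\xi$. Since $\phi$ is supported in $U$ and $\phit$ in $T(U)$, both sides equal the corresponding integrals over $G\setm F$ and $T(G\setm F)$. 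Hence $\int_{G\setm F}|\grad u|^{p-2}\grad u\cdot\grad\phi\,dx=0$ for all $\phi\in C_0^\infty(G\setm F)$ if and only if $\int_{T(G\setm F)}\B(\xi,\grad\ut)\cdot\grad\phit\,d\xi=0$ for all $\phit\in C_0^\infty(T(G\setm F))$, which is exactly the claimed equivalence of weak solutions.

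I do not expect any genuinely hard step: the proposition is essentially bookkeeping on top of the change of variables. The points deserving care are that $T(G\setm F)$ excludes the origin, so the degeneracy of $\wt$ at $\xi=0$ never enters and all the comparisons of Lemmas~\ref{lem-JB}, \ref{lem-int-id-ball} and~\ref{lem-int-G-T} are valid on the bounded sets used; that ``weak solution'' here is understood with interior test functions $C_0^\infty(G\setm F)$ and $C_0^\infty(T(G\setm F))$ (not the larger classes encoding the Neumann condition), so the equivalence is purely local; and that finiteness of all integrals involved is already part of Lemma~\ref{lem-int-id-ball}.
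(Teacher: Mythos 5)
Your argument is correct and follows essentially the same route as the paper's own proof: Lemma~\ref{lem-int-G-T} for the local function-space equivalence, the correspondence of test-function classes via smoothness of $T$ and $T^{-1}$ (the paper cites Lemma~\ref{lem-deriv} for this), and Lemma~\ref{lem-int-id-ball} applied to a bounded open set containing $\spt\phi$ to convert one integral identity into the other. The only cosmetic difference is that the paper takes $A=U=\{x\in G\setm F:\phi(x)\ne0\}$ directly rather than a slightly larger $U$.
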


\begin{proof}
Using Lemma~\ref{lem-int-G-T}, we conclude that $u\in \Wp\loc(G\setm F)$
if and only if
$\ut\in\Hp\loc(T(G\setm F),\wt)$, since $e^{-p\ka x_n} \simeq 1$ for every
compact subset of $G\setm F$.
We need to show that $u$ satisfies the integral identity in
\eqref{eq-weak-sol-p-Lapl}
for all test functions $\phi \in C_0^\infty(G\setm F)$ 
if and only if $\ut$ satisfies 
\begin{equation}   \label{eq-div-B-xi-T(G)} 
\int_{T(G\setm F)}\B(\xi,\grad\ut(\xi)) \cdot \grad \phit \,d\xi =0
\end{equation}
for all test functions $\phit\in C_0^\infty(T(G\setm F))$. 
Lemma~\ref{lem-deriv} shows that $\phit\in C_0^\infty(T(G\setm F))$ 
if and only if
$\phi = \phit\circ T \in C_0^\infty(G\setm F)$.
Lemma~\ref{lem-int-id-ball}, applied to  
$A=U=\{x\in G\setm F: \phi(x)\ne0\}$, then implies
that the integral identity in 
\eqref{eq-weak-sol-p-Lapl} becomes \eqref{eq-div-B-xi-T(G)}.
\end{proof}

\section{Removing the Neumann data}
\label{sect-rem-Neu-data}

Proposition~\ref{prop-Wp-Hp-loc-divB} shows that
the mapping $T$ transforms the unweighted \p-Laplace operator 
from $G$ into the weighted elliptic operator $\Div \B(\xi,\grad\ut)$
on the open upper unit half-ball $T(G)$.

In order to be able to use the theory of Dirichlet problems, developed
for weighted elliptic equations in Heinonen--Kilpel\"ainen--Martio \cite{HKM},
the part of the boundary, where the Neumann data are prescribed, will be
eliminated by reflection in the hyperplane $\{\xi\in\R^n:\xi_n=0\}$.

More precisely, consider the reflection mapping
\[
P\xi = P(\xi',\xi_n) =(\xi',-\xi_n),
\]
and let the open set $D$ consist of $T(G\setm F)$, 
together with its reflection
$PT(G\setm F)$ and the ``Neumann'' part of the boundary
$T(\bdy G\setm F)$ added, that is,
\[
D=B_1\setm \Ft, \quad \text{where } \Ft = T(F)\cup PT(F)\cup\{0\}.
\] 
Clearly, $\Ft$ is closed and hence $D$ is open.

We recall that $T$ maps the base $B'\times\{0\}$ of $G$ onto the upper
unit half-sphere $\{\xi\in\bdy B_1:\xi_n>0\}$
and that the point at infinity in $G$ corresponds to the origin $\xi=0$.
In particular, since we assume that $B'\times\{0\} \subset F$
and $F$ is closed, we have $\bdy D\subset \Ft$. 
Hence, the whole boundary $\bdy D$ will carry a Dirichlet condition.

Now, let $\Tt=P\circ T$ represent the map from the open circular half-cylinder $G$
to the lower unit half-ball $\{\xi\in B_1:\xi_n<0\}$. 
We extend $\B(\xi,q)$ from $T(G)$ to the whole unit ball $B_1$
as follows:
Let $\B(\xi,q)=0$ if $\xi_n=0$, while for $\xi=\Tt(x)$ with $\xi_n<0$ we define
\[ 
\B(\xi,q)=|d\Tt^*(x)q|^{p-2}|J_{\Tt}(x)|^{-1}d\Tt(x)d\Tt^*(x)q. 
\] 
Since $d\Tt(x)=PdT(x)$, $d\Tt^*(x)=dT^*(x)P$ and thus $|J_{\Tt}(x)| = |J_T(x)|$, 
we have  
\begin{equation}   \label{eq-B-Pxi}
\B(\xi,q) = P\B(P\xi,Pq)\quad \text{for all }\xi\in B_1.
\end{equation}
Clearly, Lemma~\ref{lem-JB} holds with $T$  replaced by $\Tt$ as well.
It then immediately follows from Theorems~\ref{thm-B-ellipt} 
and~\ref{thm-B-monot}
that $\B$ satisfies the ellipticity and monotonicity assumptions
(3.3)--(3.7) from Heinonen--Kil\-pe\-l\"ai\-nen--Martio~\cite{HKM} 
in the whole unit ball $B_1$.

The above reflection makes it  possible to remove the Neumann 
boundary data on $T(\bdy G\setm F)$ and obtain an equivalence with 
a Dirichlet problem on $D$. 
First, we make a suitable identification of the function spaces.
Note that Lemmas~\ref{lem-int-id-ball} and~\ref{lem-int-G-T} clearly
hold also with $T$ replaced by $\Tt$.

\begin{defi}   \label{def-L-space-ka}
The space $\Llp_{\ka}(G\setm F)$ consists of all measurable functions
$v$ on $G\setm F$ such that the norm 
\[ 
\|v\|_{\Llp_\ka(G\setm F)} 
= \biggl( \int_{G\setm F} \bigl( |v(x)|^p e^{-p\ka x_n}
     +|\grad v(x)|^p \bigr) \,dx \biggr)^{1/p} < \infty,
\] 
where $\nabla v=(\partial_1v,\cdots,\partial_nv)$ is the 
distributional gradient of $v$.
The space $\Llp_{\ka,0}(\clG\setm F)$ is the completion of 
$C_0^{\infty}(\clG\setm F)$ in the above $\Llp_\ka(G\setm F)$-norm. 
\end{defi}

We alert the reader that the $\Llp_{\ka}(G\setm F)$-norm also includes 
the function $v$, not only its gradient, and because of the weight
$e^{-p\ka x_n}$ it differs from the standard Sobolev norm.
Also note that functions in $\Llp_{\ka,0}(\clG\setm F)$ are required 
to vanish on $F$ (in the Sobolev sense), but not on the 
rest of the lateral boundary $\bdy G\setm F$.
For $t\ge0$, we let
\begin{equation}	\label{eq-Gt}
G_t:=\{x\in\clG:x_n>t\}=\clBprime\times(t,\infty).
\end{equation}
Note that the truncated cylinder $G_t$ is open at its base $B'\times\{t\}$,
but contains the lateral boundary $\bdy B'\times(t,\infty)$.

\begin{lem}           \label{lem-L-ka=L_0} 
Let $v\in\Llp_{\ka}(G\setm F)$. 
Then there exist bounded  $v_j\in\Llp_{\ka}(G\setm F)$ 
with bounded support such that $v_j\to v$ both pointwise a.e.\ and
in $\Llp_{\ka}(G\setm F)$.
\end{lem}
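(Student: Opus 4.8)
The plan is to combine a vertical truncation of $v$ with a horizontal (in the $x_n$-direction) cut-off, and then pass to a diagonal sequence. The only genuine subtlety is that the weight $e^{-p\ka x_n}$ in Definition~\ref{def-L-space-ka} multiplies only the zeroth-order term, so a function in $\Llp_\ka(G\setm F)$ may a priori grow like $e^{\ka x_n}$ as $x_n\to\infty$; this forces us to truncate \emph{before} cutting off.

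First I would truncate: for $k\ge1$ set $v^{(k)}=\max\{-k,\min\{k,v\}\}$. Then $|v^{(k)}|\le\min\{k,|v|\}$, and by the standard truncation rule for Sobolev functions $\grad v^{(k)}=\grad v\cdot\mathbf{1}_{\{|v|<k\}}$ a.e., so $|\grad v^{(k)}|\le|\grad v|$; hence $v^{(k)}\in\Llp_\ka(G\setm F)$. Since $v^{(k)}\to v$ and $\grad v^{(k)}\to\grad v$ pointwise a.e., dominated convergence with majorants $|v|^pe^{-p\ka x_n}$ and $|\grad v|^p$ gives $v^{(k)}\to v$ in $\Llp_\ka(G\setm F)$ as $k\to\infty$.

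Next, for fixed $k$, I would cut off in $x_n$. Let $\eta_m$, $m\ge1$, depend only on $x_n$, with $\eta_m\equiv1$ on $\{x_n\le m\}$, $\eta_m\equiv0$ on $\{x_n\ge2m\}$, affine in between, so $0\le\eta_m\le1$ and $|\grad\eta_m|\le1/m$ with support in $\clBprime\times[m,2m]$. Then $v^{(k)}\eta_m$ is bounded by $k$ and supported in the bounded set $\clBprime\times[0,2m]$, hence belongs to $\Llp_\ka(G\setm F)$ (it has an $L^p$ gradient by the product rule). The difference $v^{(k)}\eta_m-v^{(k)}=v^{(k)}(\eta_m-1)$ is supported in $\{x_n\ge m\}$, and expanding $\grad(v^{(k)}(\eta_m-1))$ by the product rule yields
\[
\|v^{(k)}\eta_m-v^{(k)}\|_{\Llp_\ka}^p
\simle \int_{\{x_n\ge m\}}\bigl(|v^{(k)}|^pe^{-p\ka x_n}+|\grad v^{(k)}|^p\bigr)\,dx
 +\int|v^{(k)}\grad\eta_m|^p\,dx.
\]
The first integral is a tail of a convergent integral and tends to $0$ as $m\to\infty$; for the second, $|v^{(k)}|\le k$, $|\grad\eta_m|\le1/m$ and $|\clBprime\times[m,2m]|=m|B'|$ give $\int|v^{(k)}\grad\eta_m|^p\,dx\simle k^p|B'|\,m^{1-p}\to0$, where it is essential that $p>1$. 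Hence $v^{(k)}\eta_m\to v^{(k)}$ in $\Llp_\ka(G\setm F)$ as $m\to\infty$.

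Finally I would diagonalize: choose $k_j\to\infty$ with $\|v^{(k_j)}-v\|_{\Llp_\ka}<1/j$, then $m_j\ge j$ with $\|v^{(k_j)}\eta_{m_j}-v^{(k_j)}\|_{\Llp_\ka}<1/j$, and put $v_j:=v^{(k_j)}\eta_{m_j}$. These are bounded with bounded support, $v_j\to v$ in $\Llp_\ka(G\setm F)$, and since $v^{(k_j)}\to v$ pointwise wherever $v$ is finite (hence a.e.) while $\eta_{m_j}(x)=1$ as soon as $m_j\ge x_n$, also $v_j\to v$ pointwise a.e. The main obstacle, as noted, is the term $\int|v^{(k)}\grad\eta_m|^p\,dx$: truncating first bounds the coefficient of $\grad\eta_m$, and spreading the cut-off over an interval of length $m$ makes $|\grad\eta_m|$ of order $1/m$, so that this term is $\simle k^pm^{1-p}$, which for fixed $k$ vanishes as $m\to\infty$ precisely because $p>1$.
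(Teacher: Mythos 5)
Your proof is correct and follows essentially the same line as the paper's: truncate $v$ at levels $\pm k$ (reducing to bounded $v$, justified by dominated convergence), multiply by a vertical cut-off $\eta_m$ that decays over an interval of length $\sim m$ so that $\int|v^{(k)}\grad\eta_m|^p\,dx\simle k^p m^{1-p}\to 0$ using $p>1$, and conclude. The paper compresses the diagonal step into a ``without loss of generality'' reduction to $|v|\le 1$, whereas you carry both indices explicitly and also spell out the pointwise a.e.\ convergence; the underlying argument is identical.
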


\begin{proof}
Since $v$ can be approximated in the $\Llp_\ka(G\setm F)$ norm by its truncations 
$v_k:=\min\{k,\max\{v,-k\}\}$ at levels $\pm k$, 
we can without loss of generality assume that $v$ is bounded and $|v|\le1$.

For $j=1,2,\cdots$\,, let $v_j= v\eta_j$, where
$\eta_j\in C^{\infty}(\clG)$ is a cut-off function such that 
$0\leq\eta_j\leq 1$ on $\clG$, 
$\eta_j=1$ on $\clG\setm G_j$, $\eta_j=0$ on $G_{2j}$ 
and $|\grad \eta_j|\le 2/j$. 
Then $v_j\in \Llp_\ka(G\setm F)$ with bounded support.
We also have 
\begin{align*}
\|v-v_j\|^p_{\Llp_\ka(G\setm F)}
&= \int_{G_j\setm F} \bigl( |v(1-\eta_j)|^p e^{-p\ka x_n}
             +|\grad(v(1-\eta_j))|^p \bigr) \,dx\\
&\leq  \int_{G_j\setm F}|v|^p e^{-p\ka x_n}\,dx \\
&\quad  +2^p \int_{G_j\setm F} \bigl( |(1-\eta_j)\grad v|^p
    +|v\grad(1-\eta_j)|^p \bigr) \,dx.
 \end{align*}
 Since 
\[
\int_{G_j\setm F} |v\grad(1-\eta_j)|^p 
  \le  \int_{G_j\setm G_{2j}} \biggl( \frac2j \biggr)^p \,dx
  \simle j^{1-p}
\]
and $|(1-\eta_j)\grad v| \le |\grad v|$, we get
\[
\|v-v_j\|^p_{\Llp_\ka(G\setm F)}
\simle \int_{G_j\setm F} \bigl( |v|^p e^{-p\ka x_n} +|\grad v|^p \bigr) \,dx + j^{1-p},
\]
which tends to zero by the dominated convergence theorem
and the assumption $p>1$.
\end{proof}

The following result relates the space $\Llp_\ka(G\setm F)$ to
the weighted Sobolev space on $D$.
We shall write 
\[
\Dplus=\{\xi\in D:\xi_n>0\} = T(G\setm F) \quad \text{and} \quad 
\Dminus=\{\xi\in D:\xi_n<0\} = \Tt(G\setm F).
\]

\begin{prop}   \label{prop-Llp-H}
Assume that $u\in L^1\loc(G\setm F)$ with the distributional
gradient $\grad u \in L^1\loc(G\setm F)$.
Then 
\begin{equation}   \label{eq-comp-Llp-Hp}
\|u\|_{\Llp_\ka(G\setm F)} \simeq \|u\circ T^{-1}\|_{\Hp(\Dplus,\wt)}.
\end{equation}
Moreover, the function 
\begin{equation}  \label{eq-def-ut}
\ut(\xi)= \begin{cases}
        (u\circ T^{-1})(\xi)    &\text{for } \xi\in\Dplus,\\ 
        (u\circ \Tt^{-1})(\xi) &\text{for } \xi\in\Dminus, \end{cases}
\end{equation}
extended arbitrarily to $\xi_n=0$,
belongs to $\Hp(D,\wt)$ if and only if $u\in\Llp_\ka(G\setm F)$, with comparable norms.
\end{prop}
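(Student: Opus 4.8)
The statement has two parts: the norm equivalence $\|u\|_{\Llp_\ka(G\setm F)} \simeq \|u\circ T^{-1}\|_{\Hp(\Dplus,\wt)}$, and the claim that the symmetric extension $\ut$ defined by \eqref{eq-def-ut} lies in $\Hp(D,\wt)$ iff $u\in\Llp_\ka(G\setm F)$, with comparable norms. The first part is essentially immediate from Lemma~\ref{lem-int-G-T}: applying it with $A=U=G\setm F$ gives $\int_{G\setm F}|\grad u|^p\,dx \simeq \int_{\Dplus}|\grad\ut|^p\wt\,d\xi$ and $\int_{G\setm F}|u|^p e^{-p\ka x_n}\,dx \simeq \int_{\Dplus}|\ut|^p\wt\,d\xi$ (here $G\setm F\subset B'(0,1)\times\R$, so the hypothesis $U\subset B'(0,R)\times\R$ holds with $R=1$). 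Summing the two equivalences and taking $p$-th roots yields \eqref{eq-comp-Llp-Hp}. One should note that both sides are simultaneously finite or infinite, so the equivalence is meaningful even when $u\notin\Llp_\ka$.

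**The extension to $D$.** For the second part, first observe that $\Dminus=\Tt(G\setm F)$ and $\Tt=P\circ T$, and that Lemma~\ref{lem-int-G-T} holds verbatim with $T$ replaced by $\Tt$ (as remarked in the text, since $\wt\circ P=\wt$ and $|J_{\Tt}|=|J_T|$). Hence also $\int_{G\setm F}|\grad u|^p\,dx \simeq \int_{\Dminus}|\grad\ut|^p\wt\,d\xi$ and likewise for the zeroth-order term. Adding the $\Dplus$ and $\Dminus$ contributions and using that the hyperplane $\{\xi_n=0\}$ has Lebesgue measure zero (so the arbitrary values of $\ut$ there are irrelevant to the integrals), we get
\[
\int_D \bigl(|\ut|^p+|\grad\ut|^p\bigr)\wt\,d\xi \simeq 2\int_{G\setm F}\bigl(|u|^p e^{-p\ka x_n}+|\grad u|^p\bigr)\,dx,
\]
i.e.\ $\|\ut\|_{\Hp(D,\wt)}\simeq\|u\|_{\Llp_\ka(G\setm F)}$, \emph{provided} we know that $\ut\in\Hp(D,\wt)$, i.e.\ that $\grad\ut$ exists as a distributional gradient on all of $D$, not merely separately on $\Dplus$ and $\Dminus$. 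That is the real content.

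**The main obstacle: gluing across $\{\xi_n=0\}$.** The difficulty is that a priori $\ut$ is only known to have distributional gradients on each open half $\Dplus$ and $\Dminus$, and one must check there is no singular part of $\grad\ut$ concentrated on the interface $\Sigma:=D\cap\{\xi_n=0\}=T(\bdy G\setm F)$ — i.e.\ that the two halves match up with no jump. The clean way: $\ut$ is by construction \emph{even} under the reflection $P$, i.e.\ $\ut(P\xi)=\ut(\xi)$ on $D$ (since $\Tt^{-1}=T^{-1}\circ P$, both branches in \eqref{eq-def-ut} come from the same $u$). One first reduces to $u$ bounded with bounded support via Lemma~\ref{lem-L-ka=L_0} (truncation and the cut-offs $\eta_j$, which transfer to $D$ since $\eta_j\circ T^{-1}$ are admissible), so that all integrals below are finite. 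Then, by the definition of $\Hp$, pick smooth $\phi_k\in C^\infty(\Dplus)$ with $\|\phi_k-\ut\|_{\Hp(\Dplus,\wt)}\to0$; one can arrange $\phi_k$ to be smooth up to $\Sigma$ (this uses that $\Sigma$ is a smooth piece of boundary of $\Dplus$ and a standard reflection/extension of Sobolev functions across a flat boundary for the $A_p$-weighted space, valid since $\wt$ is bounded above and below near $\Sigma$ — away from the origin $\wt$ is smooth and positive). Reflect each $\phi_k$ evenly to $\tilde\phi_k\in C^\infty(D\setm\Sigma)\cap C(D)$; the even reflection of a function smooth up to a flat boundary is Lipschitz across $\Sigma$, hence lies in $\Hp(D,\wt)$ with weak gradient the odd reflection of $\partial_{\xi_n}\phi_k$ together with the even reflection of $\grad_{\xi'}\phi_k$. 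One checks $\tilde\phi_k\to\ut$ in $L^p(D,\wt)$ and $\grad\tilde\phi_k$ is Cauchy in $L^p(D,\wt)$ (both follow from the convergence on $\Dplus$ plus the symmetry), so $\ut\in\Hp(D,\wt)$. Finally, the distributional gradient of $\ut$ on $D$ restricts to $\grad\ut$ on each half (no interface mass), which justifies the displayed norm computation above and hence the comparability of norms; the converse direction (if $\ut\in\Hp(D,\wt)$ then $u\in\Llp_\ka(G\setm F)$) is then read off from the same identity by restricting $\ut$ to $\Dplus$ and applying \eqref{eq-comp-Llp-Hp}. I expect the delicate point to be the approximation of $\ut$ by functions smooth \emph{up to} $\Sigma$ on $\Dplus$ before reflecting; alternatively one can bypass this by directly testing the distributional gradient of $\ut$ on $D$ against $\psi\in C_0^\infty(D)$, splitting the integral over $\{\pm\xi_n>0\}$, integrating by parts on each half, and observing that the two boundary terms on $\Sigma$ cancel because $\ut$ is even (the normal derivative contributions have opposite signs and equal magnitude, while the normal component of the test function is continuous), leaving $\int_D \ut\,\grad\psi\,d\xi = -\int_D g\,\psi\,d\xi$ with $g$ the evenly/oddly reflected gradient — this is the shortest route and I would present it this way.
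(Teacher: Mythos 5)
You correctly identify both the easy part (the norm comparison via Lemma~\ref{lem-int-G-T}) and the real crux: showing that the even extension $\ut$ has no singular gradient on the interface $\Sigma=D\cap\{\xi_n=0\}$. However, each of your two proposed routes for the gluing step has a problem, and the paper's actual approach is cleaner and closes the gap by localizing.

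For route~(a), the claim that one can choose $\phi_k\in C^\infty(\Dplus)$ with $\|\phi_k-\ut\|_{\Hp(\Dplus,\wt)}\to0$ that are in addition smooth up to $\Sigma$ is not automatic: $\Dplus=T(G\setm F)$ can be a quite irregular open set (since $F$ need not be confined to $\bdy G$), and there is no global extension theorem across $\Sigma$ in that generality — the very thing you flag as ``the delicate point''. For route~(b), the parenthetical description of what cancels is off: when you integrate $\int_{D_\pm}\ut\,\partial_n\psi\,d\xi$ by parts, the boundary terms on $\Sigma$ involve the traces of $\ut$ itself (not of its normal derivative), weighted by the opposite outer normals $\mp e_n$, so they cancel \emph{because} the one-sided traces of $\ut$ on $\Sigma$ agree, which is what the symmetry $\ut=\ut\circ P$ gives. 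That argument also requires the one-sided traces to exist, which again is a local regularity fact that you would need to establish, and it is essentially the same issue you were trying to dodge.

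The paper avoids both difficulties by working ball by ball. Fix any ball $B\Subset D$. Since $\itoverline{B}$ avoids the origin, $\wt\simeq 1$ on $B$ (constants depending on $B$), so weighted and unweighted Sobolev spaces coincide there. Moreover $B\cap\Dplus=B\cap\{\xi_n>0\}$ is a half-ball, hence \emph{convex}, so Lipschitz functions are dense in $\Wp(B\cap\Dplus)=\Hp(B\cap\Dplus,\wt)$ (Maz\cprime ya, Ziemer). Reflecting those Lipschitz functions evenly gives Lipschitz functions on all of $B$ — no extension theorem needed, no trace argument — and by the change-of-variable identities they converge to $\ut$ in $\Hp(B,\wt)$. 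This shows $\ut\in\Hp\loc(D,\wt)$. Finally, since $\|\ut\|_{\Hp(D,\wt)}<\infty$ by \eqref{eq-comp-Llp-Hp} and the analogous estimate on $\Dminus$, Lemma~1.15 of Heinonen--Kilpel\"ainen--Martio upgrades $\ut\in\Hp\loc(D,\wt)$ of finite norm to $\ut\in\Hp(D,\wt)$. Note that this local approach also removes your reduction to bounded, compactly supported $u$ via Lemma~\ref{lem-L-ka=L_0}, which is unnecessary and in any case does not by itself dispose of the global approximation problem in your route~(a). Your high-level strategy is correct; the missing idea is the convexity-of-local-half-balls observation plus the $\Hp\loc$-to-$\Hp$ lemma.
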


\begin{proof}
The comparison \eqref{eq-comp-Llp-Hp} follows from Lemma~\ref{lem-int-G-T}.
It also shows that $\ut\in\Hp(D,\wt)$ implies that $u\in\Llp_\ka(G\setm F)$.

Conversely, assume that $u\in\Llp_\ka(G\setm F)$. 
Lemma~\ref{lem-int-G-T} (applied to both $T$ and~$\Tt$) implies that 
\[
\ut\in\Hp(\Dplus,\wt) \quad \text{and} \quad
\ut\in\Hp(\Dminus,\wt),
\]
with norms comparable to $\|u\|_{\Llp_\ka (G\setm F)}$.

To see that $\ut\in\Hp(D,\wt)$, let $B\Subset D$ be a ball.
Note that $0\notin D$ and hence $\wt\simeq 1$ in $B$,
with comparison constants depending on $B$.
Because $B\cap\Dplus$ is convex, 
Lipschitz functions are dense in $\Wp(B\cap\Dplus)=\Hp(B\cap\Dplus,\wt)$,
by Maz\cprime ya~\cite[Section~1.1.6]{MazyaSob} or Ziemer~\cite[p.~55]{Zie}.
Reflections of such functions are clearly Lipschitz in $B$.
It then follows that $\ut$ can be approximated in the $\Hp(B,\wt)$-norm by 
Lipschitz functions, and hence $\ut\in\Hp(B,\wt)$.
Since $B$ was arbitrary, we conclude that  $\ut\in\Hp\loc(D,\wt)$.

From~\eqref{eq-comp-Llp-Hp} and a similar comparison for $\Tt(G\setm F)$
we conclude that $\|\ut\|_{\Hp(D,\wt)}$ is finite and \cite[Lemma~1.15]{HKM} 
then shows that $\ut\in\Hp(D,\wt)$.
\end{proof}

\begin{remark}
In Lemma~\ref{lem-cap-B-2B},
we shall see that the origin $0$ has zero 
$(p,\wt)$-capacity in $\Hp(B(0,1),\wt)$ and hence for bounded $F$, we also have
\[
\Hp(D,\wt)=\Hp(D\cup\{0\},\wt)
\quad\text{and}\quad \Hp_0(D,\wt)=\Hp_0(D\cup\{0\},\wt).
\]
In particular, this applies when $F=\clBprime\times\{0\}$ 
is the base of $\clG$, and thus $\Ft=\bdy B_1\cup\{0\}$.
\end{remark}

We also need to compare the spaces of test functions.
Clearly, if $\phit \in C^\infty_0(D)$ then 
$\phit\circ T \in C^\infty_0(\clG\setm F)$, by Lemma~\ref{lem-deriv}.
For Sobolev functions with zero boundary values, we have the following 
statement.

\begin{prop}        \label{prop-L0-to-H0}
If $\vt \in H^{1,p}_0(D,\wt)$, then 
$\vt \circ T \in \Llp_{\ka,0}(\clG\setm F)$.
Conversely, let $u\in\Llp_{\ka,0}(\clG\setm F)$ and define $\ut$ 
as in \eqref{eq-def-ut}.
Then $\ut\in H^{1,p}_0(D,\wt)$. 
\end{prop}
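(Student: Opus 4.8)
The plan is to obtain both implications from the norm comparison of Proposition~\ref{prop-Llp-H}, combined with the correspondence between smooth compactly supported test functions on $D$ and on $\clG\setm F$.

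For the first implication, choose $\phi_k\in C_0^\infty(D)$ with $\phi_k\to\vt$ in $\Hp(D,\wt)$. By Lemma~\ref{lem-deriv} (as already noted before the statement), each $\phi_k\circ T$ lies in $C_0^\infty(\clG\setm F)$. Applying Proposition~\ref{prop-Llp-H} to $\phi_k\circ T-\vt\circ T$, which on $\Dplus=T(G\setm F)$ is the $T$-pullback of $\phi_k-\vt$, yields
\[
\|\phi_k\circ T-\vt\circ T\|_{\Llp_\ka(G\setm F)}
  \simeq\|\phi_k-\vt\|_{\Hp(\Dplus,\wt)}
  \le\|\phi_k-\vt\|_{\Hp(D,\wt)}\longrightarrow0.
\]
Hence $\phi_k\circ T\to\vt\circ T$ in $\Llp_\ka(G\setm F)$, so $\vt\circ T\in\Llp_{\ka,0}(\clG\setm F)$.

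For the converse, take $\psi_k\in C_0^\infty(\clG\setm F)$ with $\psi_k\to u$ in $\Llp_\ka(G\setm F)$, and let $\tilde\psi_k$ be the function obtained from $\psi_k$ by the reflection formula~\eqref{eq-def-ut}. The compact support of $\psi_k$ in $\clG\setm F$ lies in $\{x_n\le M\}$ for some $M$, and, being at positive distance from the closed set $F\supset\clBprime\times\{0\}$, also in $\{x_n\ge\delta\}$ for some $\delta>0$; since $\psi_k$ moreover vanishes near $F$, the set $\spt\tilde\psi_k$ is a compact subset of $D$ contained in the annulus $\{e^{-\ka M}\le|\xi|\le e^{-\ka\delta}\}$, hence bounded away from $0$, $\bdy B_1$ and $\Ft$. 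On $\Dplus$ and $\Dminus$ the function $\tilde\psi_k$ equals $\psi_k\circ T^{-1}$, respectively $\psi_k\circ\Tt^{-1}$, which are smooth there, and the two pieces agree on $\{\xi_n=0\}$ since $T^{-1}=\Tt^{-1}$ there; thus $\tilde\psi_k$ is Lipschitz on $D$ with compact support. Because $0\notin D$, $\wt\simeq1$ on a fixed neighbourhood $U\Subset D$ of $\spt\tilde\psi_k$, so the $\Hp(U,\wt)$- and $\Wp(U)$-norms are comparable there; mollifying $\tilde\psi_k$ then gives functions in $C_0^\infty(U)\subset C_0^\infty(D)$ converging to it in $\Hp(D,\wt)$, whence $\tilde\psi_k\in\Hp_0(D,\wt)$.

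It remains to check that $\tilde\psi_k\to\ut$ in $\Hp(D,\wt)$. Splitting the integrals over $D$ into those over $\Dplus$ and $\Dminus$ (the hyperplane $\{\xi_n=0\}$ being null) and using Lemma~\ref{lem-int-G-T} for $T$ on $\Dplus$ and for $\Tt$ on $\Dminus$,
\[
\|\tilde\psi_k-\ut\|_{\Hp(D,\wt)}^p
 =\|\tilde\psi_k-\ut\|_{\Hp(\Dplus,\wt)}^p+\|\tilde\psi_k-\ut\|_{\Hp(\Dminus,\wt)}^p
 \simeq\|\psi_k-u\|_{\Llp_\ka(G\setm F)}^p\longrightarrow0.
\]
Since $\Hp_0(D,\wt)$ is closed in $\Hp(D,\wt)$ and $\ut\in\Hp(D,\wt)$ by Proposition~\ref{prop-Llp-H}, we conclude $\ut\in\Hp_0(D,\wt)$. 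The step requiring most care is the verification that the reflected approximants $\tilde\psi_k$ have compact support inside $D$ and glue to Lipschitz functions across $\{\xi_n=0\}$ — here the geometry of the change of variables from Section~\ref{sect-cyl-ball} (the base and rim of $\clG$ mapping into $\Ft$, and $\bdy G$ into $\{\xi_n=0\}$) and the fact that $\psi_k$ vanishes near $F$ are essential; the rest is bookkeeping with the norm comparisons already established.
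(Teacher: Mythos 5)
Your proposal is correct and follows essentially the same approach as the paper's proof: the forward direction is verbatim (pull back a $C_0^\infty(D)$-approximating sequence and apply the norm comparison of Proposition~\ref{prop-Llp-H}), and the reverse direction likewise reduces by density and the norm comparison to $u\in C_0^\infty(\clG\setm F)$ and then shows the reflected function is Lipschitz with compact support in $D$. The only cosmetic difference is that the paper concludes $\tilde\psi_k\in\Hp_0(D,\wt)$ by citing \cite[Lemma~1.25(i)]{HKM}, whereas you carry out an explicit mollification using $\wt\simeq1$ near $\spt\tilde\psi_k$.
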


\begin{proof}
To prove the first statement, choose a sequence \(\phit_j\in C_0^\infty(D)\) 
such that \(\phit_j\to \vt\) in \(\Hp_0(D, \wt)\). 
Define $\phi_j=\phit_j\circ T$, with $\phit_j$ restricted to $T(\clG\setm F)$,
and note that $\phi_j\in C_0^\infty(\clG\setm F)$,  by Lemma~\ref{lem-deriv}.
Using \eqref{eq-comp-Llp-Hp} with $u$ replaced by 
$\phi_j-\vt\circ T$, we have 
\begin{align*}
\|\phi_j-\vt\circ T\|_{\Llp_{\ka}(G\setm F)} 
  &\simeq \|\phit_j-\vt\|_{\Hp(\Dplus,\wt)} \\ 
  &\le \|\phit_j-\vt\|_{\Hp(D,\wt)} \to0, 
         \quad \text{as } j\to\infty,  \nonumber
\end{align*}
and consequently, $\vt\circ T \in\Llp_{\ka,0}(\clG\setm F)$.

Conversely, since
$\Llp_{\ka,0}(\clG\setm F)$ is the completion of $C^{\infty}_0(\clG\setm F)$ 
in the $\Llp_{\ka}(G\setm F)$ norm and in view 
of~Proposition~\ref{prop-Llp-H},
we can assume by a density argument that $u\in C^{\infty}_0(\clG\setm F)$. 
Then $\ut$ has compact support in $D$.
Using Lemma~\ref{lem-JB}, it is easily verified that $\ut$, extended continuously when
$\xi_n=0$, is Lipschitz in $D$.
Thus $\ut\in \Hp_0(D,\wt)$ by Lemma~1.25\,(i) in 
Heinonen--Kilpel\"ainen--Martio~\cite{HKM}. 
\end{proof}

For solutions in $\Llp_\ka(G\setm F)$, we
are now able to remove the zero Neumann condition and transfer the
mixed boundary value problem for \eqref{eq-p-lap} in $G\setm F$ to a Dirichlet problem
in $D$.

\begin{thm}   \label{thm-pLapl-after-B}
Assume that $u\in \Llp_\ka(G\setm F)$ is a weak solution of 
$\Delta_p u=0$ in $G\setm F$ with zero Neumann boundary data on 
$\bdy G\setm F$, i.e.~\eqref{eq-weak-sol-p-Lapl} holds. 
Let $\ut$ be as in \eqref{eq-def-ut}.
Then $\ut\in \Hp(D,\wt)$ and for all $\phit\in C_0^\infty(D)$,
\begin{equation}  \label{eq-int-TG=TtG=0}
\int_{\Dplus} \B(\xi,\grad\ut)\cdot\grad \phit \,d\xi 
= \int_{\Dminus} \B(\xi,\grad\ut)\cdot\grad \phit \,d\xi = 0.
\end{equation}
In particular, $\ut$ is a weak solution of the equation
$\Div\B(\xi,\grad \ut(\xi))=0$ in $D$.
\end{thm}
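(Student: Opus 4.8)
The plan is to transfer the weak formulation from the cylinder to the half-ball using the tools already set up, then upgrade the two ``one-sided'' identities to a single identity on all of $D$ by exploiting the reflection symmetry of $\B$ encoded in \eqref{eq-B-Pxi}. First I would record that $\ut\in\Hp(D,\wt)$: this is immediate from Proposition~\ref{prop-Llp-H}, since $u\in\Llp_\ka(G\setm F)$. Next, fix a test function $\phit\in C_0^\infty(D)$ and set $\phi=\phit\circ T$ on $\clG\setm F$; by Lemma~\ref{lem-deriv} we have $\phi\in C_0^\infty(\clG\setm F)$, so $\phi$ is an admissible test function in the weak formulation \eqref{eq-weak-sol-p-Lapl}. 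Applying Lemma~\ref{lem-int-id-ball} with $A=U=\{x\in G\setm F:\phi(x)\ne 0\}$ (a set on which $u,\phi\in\Wp(U)$, since $u\in\Wploc(G\setm F)$ by Proposition~\ref{prop-Llp-H} and $\phi$ is smooth with support there) converts $\int_{G\setm F}|\grad u|^{p-2}\grad u\cdot\grad\phi\,dx$ into $\int_{\Dplus}\B(\xi,\grad\ut)\cdot\grad\phit\,d\xi$. Thus \eqref{eq-weak-sol-p-Lapl} gives the vanishing of the first integral in \eqref{eq-int-TG=TtG=0}.

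For the second integral, I would run the same argument with $\Tt=P\circ T$ in place of $T$. The key point is that the analogue of Lemma~\ref{lem-int-id-ball} holds with $T$ replaced by $\Tt$ (as noted in the text just before Definition~\ref{def-L-space-ka}), and the pullback by $\Tt$ of the same $\phit$ is $\phi$ again — because $\phit$ is defined on all of $D$, which is symmetric under $P$, and $\B$ has been extended to $\Dminus$ precisely so that the change-of-variables identity holds there too. Concretely, $(\phit\circ\Tt)=(\phit\circ P\circ T)=((\phit\circ P)\circ T)$, and reflecting the computation in Lemma~\ref{lem-int-id-ball} using $d\Tt=PdT$, $d\Tt^*=dT^*P$, $|J_{\Tt}|=|J_T|$ shows that $\int_{G\setm F}|\grad u|^{p-2}\grad u\cdot\grad\phi\,dx$ also equals $\int_{\Dminus}\B(\xi,\grad\ut)\cdot\grad\phit\,d\xi$. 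Hence this second integral vanishes as well. Adding the two identities in \eqref{eq-int-TG=TtG=0} and using that $\{\xi_n=0\}$ has $\wt\,d\xi$-measure zero (and $\B(\xi,\cdot)=0$ there) gives $\int_D\B(\xi,\grad\ut)\cdot\grad\phit\,d\xi=0$ for every $\phit\in C_0^\infty(D)$, which is exactly the assertion that $\ut$ is a weak solution of $\Div\B(\xi,\grad\ut)=0$ in $D$.

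The main obstacle, and the step deserving the most care, is the bookkeeping around the reflection: one must check that a single test function $\phit\in C_0^\infty(D)$ genuinely produces the \emph{same} cylinder test function $\phi$ when pulled back via both $T$ and $\Tt$, and that the weak equation on $G\setm F$ — which a priori only ``sees'' the zero Neumann data through test functions that need not vanish on $\bdy G\setm F$ — really does control both halves. This is where the choice of the enlarged test space $C_0^\infty(\clG\setm F)=\{v|_{\clG}:v\in C_0^\infty(\R^n\setm F)\}$ in Definition~\ref{defi-weak-p-lap} is used essentially: a function $\phit\in C_0^\infty(D)$ restricted to the upper half-ball pulls back to something smooth up to the lateral boundary $\bdy G\setm F$ but not compactly supported away from it, so it is admissible only because of this enlargement. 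Once that is in place, there are no analytic difficulties — the finiteness of all integrals is guaranteed by $u\in\Llp_\ka(G\setm F)$, Proposition~\ref{prop-Llp-H}, and Lemma~\ref{lem-JB} — and the proof is essentially a symmetry argument glued onto Lemma~\ref{lem-int-id-ball}.
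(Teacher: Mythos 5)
Your overall framework matches the paper's: Proposition~\ref{prop-Llp-H} gives $\ut\in\Hp(D,\wt)$; the first identity comes from $\phi=\phit\circ T\in C_0^\infty(\clG\setm F)$ and Lemma~\ref{lem-int-id-ball}; and your observation about the essential role of the enlarged test space $C_0^\infty(\clG\setm F)$ is on point. But the reflection step contains a concrete error. You assert that ``the pullback by $\Tt$ of the same $\phit$ is $\phi$ again,'' whereas your own identity $(\phit\circ\Tt)=((\phit\circ P)\circ T)$ shows the pullback is $(\phit\circ P)\circ T$, which equals $\phi=\phit\circ T$ only when $\phit=\phit\circ P$. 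An arbitrary $\phit\in C_0^\infty(D)$ need not be $P$-symmetric, so the claim that $\int_{G\setm F}|\grad u|^{p-2}\grad u\cdot\grad\phi\,dx$ (with your original $\phi$) equals $\int_{\Dminus}\B(\xi,\grad\ut)\cdot\grad\phit\,d\xi$ is false in general.

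The fix is small and is exactly what the paper does: introduce a \emph{second} test function $\phi'=\phit\circ\Tt$ on $\clG\setm F$. By the $\Tt$-version of Lemma~\ref{lem-deriv}, $\phi'\in C_0^\infty(\clG\setm F)$, so it is admissible in \eqref{eq-weak-sol-p-Lapl}; the $\Tt$-version of Lemma~\ref{lem-int-id-ball} then gives $\int_{G\setm F}|\grad u|^{p-2}\grad u\cdot\grad\phi'\,dx=\int_{\Dminus}\B(\xi,\grad\ut)\cdot\grad\phit\,d\xi$, and the left-hand side vanishes by the weak formulation tested against $\phi'$. Thus each half-ball integral in \eqref{eq-int-TG=TtG=0} vanishes because it equals a \emph{different} admissible cylinder integral, not because the two pullbacks coincide. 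The equality of the two integrals is then a consequence of both being zero, not an input to the argument.
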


\begin{proof}
Proposition~\ref{prop-Llp-H} implies that $\ut\in \Hp(D,\wt)$.
Let $\phit\in C_0^\infty(D)$. 
The integral identities  in \eqref{eq-int-TG=TtG=0} then follow directly from
\eqref{eq-weak-sol-p-Lapl} and
Lemma~\ref{lem-int-id-ball} with $\phi=\phit\circ T$ 
and $\phi=\phit\circ\Tt$, respectively.
\end{proof}

\begin{remark}
By Theorem~3.70 in \cite{HKM}, weak solutions 
of $\dvg\B(\xi,\grad \ut(\xi))=0$
are (after a modification on a set of zero measure) locally H\"older
continuous in~$D$.
Hence, Theorem~\ref{thm-pLapl-after-B} also implies that $\lim_{x\to x_0} u(x)$
exists and is finite for every $x_0$ belonging to the Neumann boundary 
$\bdy G\setm F$.
\end{remark}

\section{Existence of solutions}
\label{sec-existence}

In this section, we shall prove the existence of weak solutions
to equation \eqref{eq-p-lap} in $G\setm F$ 
with zero Neumann boundary data on \(\bdy G\setm F\) 
and prescribed continuous Dirichlet boundary data $u=f$ on 
\[
F_0=F\cap\bdy(G\setm F).
\]
This will be done using uniform approximations by Lipschitz boundary data
from the space $\Llp_\ka(G\setm F)$.

Let therefore $f\in \Llp_\ka(G\setm F)$ and let $\ft$ be defined as in 
\eqref{eq-def-ut}. 
By Theorems~3.17 and 3.70 in Heinonen--Kilpel\"ainen--Martio~\cite{HKM}, 
there is a unique continuous weak solution $\ut\in\Hp(D,\wt)$ of 
\begin{equation}
\Div(\B(\xi,\grad \ut))=0     \label{eqnB}
\end{equation} 
with boundary data $\ft$ in the sense that $\ut-\ft\in\Hp_0(D,\wt)$.

We shall show that $u=\ut\circ T$ satisfies \eqref{eq-weak-sol-p-Lapl}  
and that $u-f\in\Llp_{\ka,0}(\clG\setm F)$.
To do this, we use the integral formulation
\begin{equation}  \label{Id-B}
\int_{D}\B(\xi,\grad \ut)\cdot\grad\phit\,d\xi=0
\quad \text{for all test functions } \phit\in C^\infty_0(D)    
\end{equation} 
and split the left-hand side into integrals over $\Dplus$ 
and $\Dminus$.
We shall see that the corresponding integrals are the same 
and that each is zero. 
For this, we prove that $\ub=\ut\circ P$ is also a solution 
of \eqref{eqnB} with $\ub-\ft\in\Hp_0(D,\wt)$, and thus $\ut=\ut\circ P$,
by uniqueness. 
The following identity obtained from \eqref{eq-B-Pxi} will be useful, namely
\begin{equation}
\B(\xi,Pq)=P\B(P\xi,q)   	\label{operB-rel}
\end{equation}
for all $q\in\R^n$ and  all $\xi\in B_1$.
First, we prove the following symmetry result.

\begin{lem}				\label{lem-B+=B-}
Let $\ut\in\Hp(D, \wt)$ and define $\ub=\ut\circ P$. 
Let $\phit\in\Hp_0(D,\wt)$ be an arbitrary test function and set 
\(\phib=\phit\circ P\). 
Assume that \eqref{operB-rel} holds in $D$. 
Then for any set $A\subset D$, we have that
\[ 
\int_{A}\B(\xi,\grad \ub(\xi))\cdot \grad\phib(\xi)\,d\xi
 =\int_{P(A)}\B(\xi,\grad \ut(\xi))\cdot 
                     \grad\phit(\xi)\,d\xi.
\]
\end{lem}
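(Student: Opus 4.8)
The plan is to read the asserted identity as an instance of the linear change of variables $\eta=P\xi$, exploiting that the reflection $P$ is an orthogonal involution: $P^2=\mathrm{id}$, $P^*=P$, and $|{\det}P|=1$. Observe first that $D$ is symmetric under $P$, i.e.\ $P(D)=D$, since $P(\Ft)=\Ft$; hence $P(A)\subset D$ and the right-hand side makes sense.

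First I would record the chain rule for composition with $P$. Since $\wt^{1/(1-p)}\in L^p\loc(\R^n)$, membership in $\Hp(D,\wt)$ forces $\ut\in L^1\loc(D)$ with distributional gradient in $L^1\loc(D)$, and the same holds for $\phit$; as explained after \eqref{eq-chain-rule} (obtained by mollification, the argument being immediate here because $P$ is linear), we get
\[
\grad\ub(\xi)=P^*\grad\ut(P\xi)=P\grad\ut(P\xi)
\quad\text{and}\quad
\grad\phib(\xi)=P\grad\phit(P\xi)
\]
for a.e.\ $\xi\in D$; since $P$ maps null sets to null sets, these identities may be freely composed with $P$.

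Next I would transform the left-hand integrand pointwise a.e.\ on $A$. Substituting the chain rule and then applying \eqref{operB-rel} (which holds for $\xi\in D$) with $q=\grad\ut(P\xi)$ gives
\[
\B(\xi,\grad\ub(\xi))\cdot\grad\phib(\xi)
=\B(\xi,P\grad\ut(P\xi))\cdot P\grad\phit(P\xi)
=\bigl(P\B(P\xi,\grad\ut(P\xi))\bigr)\cdot\bigl(P\grad\phit(P\xi)\bigr).
\]
Because $P$ is orthogonal, $(Pa)\cdot(Pb)=a\cdot b$, so the right-hand side equals $\B(P\xi,\grad\ut(P\xi))\cdot\grad\phit(P\xi)$. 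Integrating over $A$ and changing variables via $\eta=P\xi$ — legitimate since $P$ is a linear isometry of $\R^n$ carrying $A$ onto $P(A)$ with $|{\det}P|=1$ — yields precisely $\int_{P(A)}\B(\eta,\grad\ut(\eta))\cdot\grad\phit(\eta)\,d\eta$, which is the claim.

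I do not expect a genuine obstacle: the statement is just the symmetry of $\B$ under $P$ combined with the invariance of Lebesgue measure and of the Euclidean inner product under $P$. The only points deserving a word of justification are the validity of the chain rule for the merely (weighted-)Sobolev functions $\ut,\phit$, handled by the mollification argument already cited for \eqref{eq-chain-rule}, and the availability of \eqref{operB-rel}, which is immediate because \eqref{eq-B-Pxi} was established on all of $B_1\supset D$.
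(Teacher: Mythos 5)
Your proof is correct and follows essentially the same route as the paper's: the chain rule $\grad\ub(\xi)=P\grad\ut(P\xi)$, the symmetry identity \eqref{operB-rel}, the orthogonality relation $(Pq)\cdot(P\bar q)=q\cdot\bar q$, and the change of variables $\eta=P\xi$ carrying $A$ onto $P(A)$. The only difference is cosmetic — you apply \eqref{operB-rel} pointwise before changing variables, while the paper performs the substitution $\zeta=P\xi$ and invokes \eqref{operB-rel} in the same step.
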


\begin{proof}
We use the fact that $\grad\ub(\xi)=P\grad\ut(P\xi)$ and 
\(\grad\phib(\xi)=P\grad\phit(P\xi)\) to rewrite the integral 
on the left-hand side. 
The change of variables $\zeta=P\xi$, together with 
\eqref{operB-rel}, then implies that
\begin{align*}
\int_{A}\B(\xi,\grad \ub(\xi))\cdot\grad\phib\,d\xi 
&=\int_{A}\B(\xi,P\grad \ut(P\xi))\cdot P\grad\phit(P\xi)\,d\xi\\
&=\int_{P(A)}P\B(\zeta,\grad \ut(\zeta))\cdot P\grad\phit(\zeta)\,d\zeta \\
&= \int_{P(A)}\B(\xi,\grad \ut(\xi))\cdot 
                   \grad\phit(\xi)\,d\xi, 
\end{align*}
where in the last step we used the fact that 
$(Pq)\cdot (P\bar{q})=q\cdot \bar{q}$ for any $q,\bar{q}\in\R^n$.
\end{proof}

\begin{cor}        \label{cor-uop}
Assume that $\ft\in\Hp(D,\wt)$   satisfies \(\ft=\ft\circ P\) 
and that \eqref{operB-rel} holds in $D$. 
Let $\ut\in\Hp(D, \wt)$ be a solution of~\eqref{Id-B} with 
\(\ut-\ft\in\Hp_0(D,\wt)\). 
Then \(\ut=\ut\circ P\).
\end{cor}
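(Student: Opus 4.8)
The plan is to show that $\ub:=\ut\circ P$ is again a weak solution of \eqref{eqnB} in $D$ with the same boundary data as $\ut$, and then to conclude $\ut=\ub$ by uniqueness for the Dirichlet problem.

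First I would note that $D=B_1\setm\Ft$ is invariant under the reflection $P$: indeed $PB_1=B_1$, while $P\Ft=\Ft$ because $\Ft=T(F)\cup PT(F)\cup\{0\}$ is by construction symmetric in $\{\xi_n=0\}$. Hence $P(D)=D$, and since $P$ is a smooth involution of $D$, the assignment $\phit\mapsto\phib:=\phit\circ P$ maps $C_0^\infty(D)$ bijectively onto itself. I would also record the norm invariance $\|v\circ P\|_{\Hp(D,\wt)}=\|v\|_{\Hp(D,\wt)}$ for every $v\in\Hp(D,\wt)$, which follows from the change of variables $\zeta=P\xi$ together with the radiality $\wt\circ P=\wt$ of the weight $\wt(\xi)=|\xi|^{p-n}$ and the identity $|\grad(v\circ P)(\xi)|=|P\grad v(P\xi)|=|\grad v(P\xi)|$. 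In particular $\ub\in\Hp(D,\wt)$.

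Next, for a test function $\phit\in C_0^\infty(D)$ with $\phib=\phit\circ P$, I would apply Lemma~\ref{lem-B+=B-} with $A=D$ (legitimate since \eqref{operB-rel} is assumed in $D$) and use $P(D)=D$ to get
\[
\int_D\B(\xi,\grad\ub)\cdot\grad\phib\,d\xi=\int_D\B(\xi,\grad\ut)\cdot\grad\phit\,d\xi=0,
\]
the last equality being \eqref{Id-B} for $\ut$. As $\phib$ ranges over all of $C_0^\infty(D)$ when $\phit$ does, this shows that $\ub$ satisfies \eqref{Id-B}. For the boundary data I would use the hypothesis $\ft=\ft\circ P$ to write $\ub-\ft=(\ut-\ft)\circ P$; approximating $\ut-\ft$ in $\Hp(D,\wt)$ by functions $\psi_j\in C_0^\infty(D)$ and composing with $P$, the norm invariance above shows $\psi_j\circ P\to(\ut-\ft)\circ P$ in $\Hp(D,\wt)$, with $\psi_j\circ P\in C_0^\infty(D)$, so $\ub-\ft\in\Hp_0(D,\wt)$.

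Thus $\ut$ and $\ub$ are both weak solutions of \eqref{eqnB} in $D$ with the same boundary data in the Sobolev sense, and uniqueness for the Dirichlet problem for such weighted quasilinear equations (Theorem~3.17 in \cite{HKM}; or directly: subtract the two weak formulations, approximate and test with $\ut-\ub\in\Hp_0(D,\wt)$, and invoke the strict monotonicity of Theorem~\ref{thm-B-monot} to force $\grad\ut=\grad\ub$ a.e., hence $\ut=\ub$) yields $\ut=\ub=\ut\circ P$. The only point requiring any care is the invariance of $\Hp(D,\wt)$, of its norm, and of the subspace $\Hp_0(D,\wt)$ under $P$; this rests entirely on $P$ being a Euclidean isometry preserving $D$ and on the weight being radial, so it is routine once spelled out.
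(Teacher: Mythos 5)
Your proof is correct and follows essentially the same route as the paper: you show $\ub=\ut\circ P$ is also a weak solution of \eqref{Id-B} by applying Lemma~\ref{lem-B+=B-} with $A=D=P(D)$, observe $\ub-\ft=(\ut-\ft)\circ P\in\Hp_0(D,\wt)$, and invoke uniqueness. The paper states the invariance of $\Hp_0(D,\wt)$ under $P$ more tersely; your spelling it out via norm invariance and approximation by $C_0^\infty$ is the same argument made explicit.
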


\begin{proof}
We shall show that $\ub:=\ut\circ P \in\Hp(D, \wt)$ also satisfies
\eqref{Id-B} with the same boundary data.
Let $\phib\in C^\infty_0(D)$ be arbitrary.
Then clearly $\phit:=\phib\circ P \in C^\infty_0(D)$.  
From \eqref{Id-B} and Lemma~\ref{lem-B+=B-} with $A=D=P(D)$ 
we conclude that 
\[ 
\int_{D}\B(\xi,\grad \ub)\cdot\grad\phib\,d\xi
= \int_{D}\B(\xi,\grad \ut)\cdot\grad\phit\,d\xi =0.
\]
Thus $\ub$ is also a solution of \eqref{Id-B} with
\[
\ub-\ft = \ub-\ft\circ P\in\Hp_0(D,\wt). 
\]
By uniqueness of solutions, we get that $\ut=\ub=\ut\circ P$.
\end{proof}

We can now show that \(u:=\ut\circ T\in \Llp_\ka(G\setm F)\) is 
a continuous solution of 
the \p-Laplace equation $\Delta_p u=0$ in $G\setm F$ with
zero Neumann condition on $\bdy G\setm F$ and 
the prescribed Dirichlet boundary data $f$.
Note that since $\ut\in \Hp(D,\wt)$, 
the integral identity \eqref{Id-B} holds for all 
$\phi\in \Hp_0(D,\wt)$, by the density of $C_0^\infty(D)$ 
in $\Hp_0(D,\wt)$.

\begin{thm}   \label{thm-ex-sol-Lip}
For every $f\in \Llp_\ka(G\setm F)$,
there exists a unique continuous weak solution $u\in \Llp_\ka(G\setm F)$ 
of the mixed boundary value problem \eqref{eq-weak-sol-p-Lapl} in $G\setm F$, 
such that \(u-f\in\Llp_{\ka,0}(\clG\setm F)\).

Moreover, the following comparison principle holds: 
If $f_1,f_2\in \Llp_\ka(G\setm F)$ and $f_1\le f_2$ on $F_0$ in the
sense that
$\min\{f_2-f_1,0\}\in \Llp_{\ka,0}(\clG\setm F)$, then the corresponding 
continuous weak
solutions $u_1, u_2\in \Llp_\ka(G\setm F)$ satisfy $u_1\le u_2$ in $G\setm F$.
\end{thm}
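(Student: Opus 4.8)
The plan is to reduce the statement entirely to the corresponding Dirichlet problem on the unit ball $D$ with the weighted operator $\B$, where the existence, uniqueness and comparison results are already available from Heinonen--Kilpel\"ainen--Martio~\cite{HKM}, and then transfer everything back via $T$. First I would, given $f\in\Llp_\ka(G\setm F)$, define $\ft$ by \eqref{eq-def-ut}; by Proposition~\ref{prop-Llp-H} we have $\ft\in\Hp(D,\wt)$. The theory in \cite[Theorems~3.17 and~3.70]{HKM} then produces a unique continuous weak solution $\ut\in\Hp(D,\wt)$ of $\Div\B(\xi,\grad\ut)=0$ in $D$ with $\ut-\ft\in\Hp_0(D,\wt)$ (this uses that $\B$ is elliptic, monotone and homogeneous on $B_1$, and that $\bdy D\subset\Ft$ so the whole boundary carries the Dirichlet condition). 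Setting $u=\ut\circ T$, Proposition~\ref{prop-Llp-H} gives $u\in\Llp_\ka(G\setm F)$ with $\|u\|_{\Llp_\ka}\simeq\|\ut\|_{\Hp(D,\wt)}$, and Proposition~\ref{prop-L0-to-H0} (its first half, applied to $\vt=\ut-\ft$, noting $(\ut-\ft)\circ T=u-f$ since $\ft\circ T=f$) shows $u-f\in\Llp_{\ka,0}(\clG\setm F)$. Continuity of $u$ on $G\setm F$ follows from the local H\"older continuity of $\ut$ on $D$ together with the fact that $T$ is a diffeomorphism off the origin.

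Next I would verify that $u$ actually solves the \emph{mixed} problem \eqref{eq-weak-sol-p-Lapl}, i.e.\ with test functions in $C_0^\infty(\clG\setm F)$ rather than only $C_0^\infty(G\setm F)$. Here the reflection symmetry is the crucial point. Since $\ft=\ft\circ P$ by construction of $\ft$ via \eqref{eq-def-ut}, and \eqref{operB-rel} (equivalently \eqref{eq-B-Pxi}) holds in $B_1$, Corollary~\ref{cor-uop} gives $\ut=\ut\circ P$. Now take any $\phit\in C_0^\infty(D)$ and split \eqref{Id-B} as in Theorem~\ref{thm-pLapl-after-B}:
\[
\int_{\Dplus}\B(\xi,\grad\ut)\cdot\grad\phit\,d\xi
 + \int_{\Dminus}\B(\xi,\grad\ut)\cdot\grad\phit\,d\xi = 0.
\]
Applying Lemma~\ref{lem-B+=B-} with $\ub=\ut$ (using $\ut=\ut\circ P$) and $A=\Dminus$ shows the second integral equals $\int_{\Dplus}\B(\xi,\grad\ut)\cdot\grad(\phit\circ P)\,d\xi$; since $\phit\circ P$ is again a legitimate test function restricted to $\Dplus$, and since for an arbitrary $\psi\in C_0^\infty(\clG\setm F)$ the function $\psi\circ T^{-1}$ extended by reflection to $D$ is admissible (by Lemma~\ref{lem-deriv} on the half-ball, reflected), one deduces that $\int_{\Dplus}\B(\xi,\grad\ut)\cdot\grad\phit\,d\xi=0$ for all test functions on the half-ball that need not vanish on the equatorial sphere. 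Pulling this back through $T$ with Lemma~\ref{lem-int-id-ball} (with $\phi=\phit\circ T\in C_0^\infty(\clG\setm F)$) yields exactly \eqref{eq-weak-sol-p-Lapl}. Uniqueness of $u$ transfers from uniqueness of $\ut$: if $u_1,u_2$ are two continuous solutions in $\Llp_\ka(G\setm F)$ with $u_i-f\in\Llp_{\ka,0}(\clG\setm F)$, then by Proposition~\ref{prop-L0-to-H0} the reflected functions $\ut_i$ lie in $\Hp(D,\wt)$ with $\ut_i-\ft\in\Hp_0(D,\wt)$ and solve \eqref{eqnB} in $D$ (this last step again needs the reflection argument above, run in reverse, to see that a mixed solution pulls back to a genuine weak solution on all of $D$), so $\ut_1=\ut_2$ and hence $u_1=u_2$.

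For the comparison principle, given $f_1\le f_2$ on $F_0$ in the stated sense, i.e.\ $\min\{f_2-f_1,0\}\in\Llp_{\ka,0}(\clG\setm F)$, I would push this to $D$ via Proposition~\ref{prop-L0-to-H0}: the reflected function $\min\{\ft_2-\ft_1,0\}$ belongs to $\Hp_0(D,\wt)$, which is precisely the condition ``$\ft_1\le\ft_2$ on $\bdy D$'' in the $\Hp_0$-sense used in the comparison principle of \cite[Lemma~3.18 or Theorem~3.21]{HKM}. That comparison principle then gives $\ut_1\le\ut_2$ in $D$, and composing with $T$ yields $u_1\le u_2$ in $G\setm F$. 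The main obstacle I anticipate is not any single hard estimate but rather the careful bookkeeping around the test-function spaces: one must be sure that $C_0^\infty(D)$ test functions, after reflection-symmetrization, exhaust enough of $C_0^\infty(\clG\setm F)$ (via $T$) to recover the full mixed weak formulation, and conversely that an arbitrary mixed solution on $G\setm F$ reflects to an honest weak solution on all of $D$ so that the \cite{HKM} uniqueness and comparison theorems apply verbatim. This symmetrization step, already isolated in Lemma~\ref{lem-B+=B-} and Corollary~\ref{cor-uop}, is the linchpin of the argument.
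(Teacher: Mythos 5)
Your proposal is correct and mirrors the paper's own proof step by step: transfer $f$ to $\ft\in\Hp(D,\wt)$ by Proposition~\ref{prop-Llp-H}, invoke \cite[Theorems~3.17 and~3.70]{HKM} for $\ut$, use Corollary~\ref{cor-uop} and Lemma~\ref{lem-B+=B-} to split \eqref{Id-B} into two equal half-ball integrals and pull back via Lemma~\ref{lem-int-id-ball}, and obtain uniqueness through Theorem~\ref{thm-pLapl-after-B} plus Proposition~\ref{prop-L0-to-H0} and the comparison principle from \cite[Lemma~3.18]{HKM}. The only cosmetic differences are that you apply Lemma~\ref{lem-B+=B-} with $A=\Dminus$ rather than $A=\Dplus$ and spell out the comparison step in slightly more detail; neither changes the substance.
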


\begin{proof}
Let \(\ft\) be the function associated with $f$ as in \eqref{eq-def-ut},
with $u$ replaced by $f$.
Then $\ft\in \Hp(D,\wt)$, by  Proposition~\ref{prop-Llp-H}.
Let $\ut\in\Hp(D,\wt)$ be the unique continuous solution of \eqref{eqnB}
with \(\ut-\ft\in\Hp_0(D,\wt)\), provided by 
Heinonen--Kilpel\"ainen--Martio~\cite[Theorems~3.17 and~3.70]{HKM}.
Define  $u=\ut\circ T$ on $G\setm F$, with $\ut$ restricted to $T(G\setm F)$.

Suppose that \(\phi \in C_0^{\infty}(\clG\setm F) \) is an arbitrary 
test function and set
\begin{equation}  \label{eq-def-phit}
\phit(\xi) = \begin{cases}  
              \phi\circ T^{-1}(\xi) &\text{for }\xi\in D \text{ with }\xi_n\ge0, \\
              \phi\circ\Tt^{-1}(\xi) &\text{for }\xi\in D \text{ with }\xi_n<0.   
      \end{cases} 
\end{equation}
Then  \(\phit\in\Hp_0(D,\wt)\), by Proposition~\ref{prop-L0-to-H0}, 
and clearly $\phit=\phit\circ P$.
From Corollary~\ref{cor-uop} we have that $\ut=\ut\circ P$.
Thus, Lemma~\ref{lem-B+=B-} with $A$ replaced by $\Dplus$ gives
\[
\int_{\Dplus}\B(\xi,\grad \ut)\cdot \grad\phit\,d\xi
=\int_{\Dminus}\B(\xi,\grad \ut)\cdot \grad\phit\,d\xi.
\]
Since the left-hand side of (\ref{Id-B}) is the sum of  
these two integrals, it follows that
\[
\int_{\Dplus}\B(\xi,\grad \ut)\cdot\grad\phit\,d\xi=0.
\]
Lemma~\ref{lem-int-id-ball} now gives
\begin{equation}           \label{pweak-G} 
\int_{G\setm F}|\grad u|^{p-2}\grad u\cdot\grad\phi\,dx
= \int_{\Dplus}\B(\xi,\grad \ut)\cdot\grad\phit\,d\xi =0.		
\end{equation} 
Since $\phi\in C_0^{\infty}(\clG\setm F)$ was assumed to be arbitrary, 
we conclude that $u$ is 
a weak solution of \eqref{eq-weak-sol-p-Lapl} as in Definition~\ref{defi-weak-p-lap}. 
Moreover, $u\in\Llp_\ka(G\setm F)$, by Proposition~\ref{prop-Llp-H}.

To prove uniqueness, suppose that a continuous function
$v\in\Llp_{\ka}(G\setm F)$ satisfies (\ref{eq-weak-sol-p-Lapl})
and \(v-f\in\Llp_{\ka,0}(\clG\setm F)\). 
Let $\vt$ be as in \eqref{eq-def-ut}, with $u$ replaced by~$v$.
Theorem~\ref{thm-pLapl-after-B} then implies that $\vt$ satisfies 
\eqref{Id-B}.
Moreover, Proposition~\ref{prop-L0-to-H0} shows that
$\vt-\ft\in\Hp_0(D, \wt)$. 
From the uniqueness of solutions to \eqref{Id-B} we thus get that
$\vt=\ut$, and so $v=u$.
Finally, the comparison principle follows immediately from
Heinonen--Kilpel\"ainen--Martio~\cite[Lemma~3.18]{HKM}.
\end{proof}

We shall now use uniform approximations to treat continuous boundary
data on~$F_0$.
Suppose that $f\in C(F_0)$ and, if $F_0$ is unbounded, also that 
the limit
\begin{equation}		\label{a-limf}
f(\infty):= \lim_{\substack{x_n\to\infty\\ x\in F_0}} f(x)
\end{equation}
exists and is finite.
Replacing $f$ by $f-f(\infty)$, we can assume without loss of generality that 
$f(\infty)=0$.
We then find a sequence of compactly supported Lipschitz functions $\fb_k:F_0\to\R$ 
such that for $k=1,2,\cdots$\,,
\[
\|\fb_k-f\|_{L^\infty(F_0)}<2^{-k}.   
\]
By the McShane--Whitney extension theorem 
(see Heinonen~\cite[Theorem~2.3]{HJ}), there exist Lipschitz functions 
$f_k:\clG\to \R$ such that $f_k|_{F_0}=\fb_k$.
The Lipschitz constant of $f_k$ is preserved when $\fb_k$ is extended to $\clG$. 
Multiplying $f_k$ by a cut-off function, if necessary, we may assume that 
$f_k$ has compact support.

\begin{thm}			\label{thm-sol-Lip}
Let $f\in C(F_0)$ and, if $F_0$ is unbounded, assume also that the limit 
in \eqref{a-limf} is zero.
Let $\{f_k\}_{k=1}^\infty$ be a sequence of compactly supported 
Lipschitz functions on $\clG$ such that for all $k=1,2,\cdots$\,, 
\begin{equation}   \label{eq-approx-f}
\|f_k-f\|_{L^\infty(F_0)}<2^{-k}.
\end{equation}
Let $u_k\in\Llp_k(G\setm F)$ be the unique continuous weak solution 
of~\eqref{eq-weak-sol-p-Lapl} with $u_k-f_k\in\Llp_{\ka,0}(\clG\setm F)$, 
provided by Theorem~\ref{thm-ex-sol-Lip}.
Then $u_k$ converge uniformly in $G\setm F$
and the function $u:=\lim_{k\to\infty}u_k$ is a bounded
continuous weak solution 
of the \p-Laplace equation~\eqref{eq-p-lap}
in $G\setm F$ with zero Neumann boundary data 
on~$\bdy G\setm F$, in the sense of~\eqref{eq-weak-sol-p-Lapl}.
\end{thm}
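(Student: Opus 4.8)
The plan is to realise $u$ as a uniform limit of the $u_k$ by means of the comparison principle in Theorem~\ref{thm-ex-sol-Lip}, and then to pass to the limit in the weak identity \eqref{eq-weak-sol-p-Lapl} using gradient convergence obtained in the half-ball picture. Fix $j,k$ and put $M_{j,k}:=\|f_j-f_k\|_{L^\infty(F_0)}$, so $M_{j,k}<2^{-j}+2^{-k}$ by \eqref{eq-approx-f}. The constant function $M_{j,k}$ lies in $\Llp_\ka(G\setm F)$, since the weight $e^{-p\ka x_n}$ is integrable over $G$, and by uniqueness in Theorem~\ref{thm-ex-sol-Lip} the solution with boundary data $f_k+M_{j,k}$ equals $u_k+M_{j,k}$. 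Since $f_j\le f_k+M_{j,k}$ on $F_0$ and $\min\{(f_k+M_{j,k})-f_j,0\}$ is Lipschitz with compact support in $\clG$ and vanishes on $F_0$, it belongs to $\Llp_{\ka,0}(\clG\setm F)$: by convexity of $G$ the distance from a point of $G\setm F$ to $F$ is attained on $F_0$, so multiplying by cut-offs vanishing near $F$ approximates it in $\Llp_\ka(G\setm F)$ by functions from $C_0^\infty(\clG\setm F)$. The comparison principle then gives $u_j\le u_k+M_{j,k}$ in $G\setm F$, and symmetrically $\|u_j-u_k\|_{L^\infty(G\setm F)}\le M_{j,k}$. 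Hence $\{u_k\}$ converges uniformly on $G\setm F$ to a function $u$, which is continuous (a uniform limit of continuous functions) and bounded, e.g.\ $\sup_{G\setm F}|u|\le\|f_1\|_{L^\infty(\clG)}+\sum_{k\ge1}2^{-k}$ by the maximum principle for the $u_k$.

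It remains to check that $u$ satisfies \eqref{eq-weak-sol-p-Lapl} for every $\phi\in C_0^\infty(\clG\setm F)$; for this I would pass to the half-ball. Let $\ut_k\in\Hp(D,\wt)$ be the continuous solution of \eqref{eqnB} attached to $u_k$ in the proof of Theorem~\ref{thm-ex-sol-Lip}, so $u_k=\ut_k\circ T$ on $G\setm F$, and let $\ut$ be defined from $u$ by the formula \eqref{eq-def-ut}. Then the uniform convergence $u_k\to u$ gives, by composition with $T^{-1}$ and $\Tt^{-1}$, that $\ut_k\to\ut$ uniformly on $D$ (note $\ut_k=\ut_k\circ P$ by Corollary~\ref{cor-uop}, and likewise $\ut=\ut\circ P$). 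Since $0\notin D$, the weight $\wt(\xi)=|\xi|^{p-n}$ is bounded above and away from zero on every compact subset of $D$, so \eqref{eqnB} is locally uniformly elliptic there; combined with the uniform bound $\sup_D|\ut_k|<\infty$, the interior regularity and stability of solutions of \eqref{eqnB} (cf.\ \cite[Chapter~6]{HKM}, together with the $C^{1,\alpha}$-estimates available because $\wt$ is smooth and nondegenerate away from $0$) give that $\ut\in\Hp\loc(D,\wt)$ solves \eqref{eqnB} in $D$ and that $\grad\ut_k\to\grad\ut$ locally uniformly in $D$.

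Now fix $\phi\in C_0^\infty(\clG\setm F)$. Since $\spt\phi$ is a compact subset of $\clG$ disjoint from $F$, its image $T(\spt\phi)$ is a compact subset of $D$ (it is bounded away from $0$ and disjoint from $\Ft$), and on it $\grad\ut_k\to\grad\ut$ uniformly. By the chain rule \eqref{eq-chain-rule}, $\grad u_k(x)=dT^*(x)\grad\ut_k(T(x))$, so $\grad u_k\to\grad u$ uniformly on $\spt\phi\cap(G\setm F)$, a set of finite measure. Since each $u_k$ satisfies \eqref{eq-weak-sol-p-Lapl}, that is $\int_{G\setm F}|\grad u_k|^{p-2}\grad u_k\cdot\grad\phi\,dx=0$, letting $k\to\infty$ and using this uniform convergence of the integrand on $\spt\phi\cap(G\setm F)$ yields $\int_{G\setm F}|\grad u|^{p-2}\grad u\cdot\grad\phi\,dx=0$. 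Since moreover $u\in\Wploc(G\setm F)$ (by Lemma~\ref{lem-int-G-T}, as $\ut|_{\Dplus}\in\Hp\loc(\Dplus,\wt)$), $u$ is a weak solution of the mixed boundary value problem in the sense of Definition~\ref{defi-weak-p-lap}.

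The main obstacle is precisely this passage to the limit: uniform convergence of the $u_k$ controls no derivatives, so the argument really relies on the interior $C^{1,\alpha}$-regularity for \eqref{eqnB} on the ball, where the reflection from Section~\ref{sect-rem-Neu-data} has turned the Neumann part of $\bdy G$ into an interior set, so that ordinary interior estimates apply up to the Neumann boundary. A more routine point that still needs care is the membership $\min\{(f_k+M_{j,k})-f_j,0\}\in\Llp_{\ka,0}(\clG\setm F)$.
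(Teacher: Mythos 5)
Your proof reaches the same uniform limit $u$, but by a genuinely different route, and the second stage of it has a real gap. What you do differently: (i) you run the Cauchy/sandwiching argument entirely in the cylinder, invoking the comparison principle from Theorem~\ref{thm-ex-sol-Lip} together with a direct verification that $\min\{(f_k+M_{j,k})-f_j,0\}\in\Llp_{\ka,0}(\clG\setm F)$; the paper instead moves to the ball first and uses the comparison principle \cite[Lemma~3.18]{HKM} on the symmetrized data $\gt_k=\ft_k+3\cdot2^{-k}$, $\gb_k=\ft_k-3\cdot2^{-k}$, which are monotone in $k$. (ii) Having $\ut_k\to\ut$ uniformly, you try to pass to the limit in the weak identity by extracting locally uniform gradient convergence from interior $C^{1,\alpha}$ estimates; the paper instead notes that $\ut_k\pm3\cdot2^{-k}$ are monotone sequences of solutions and applies the Harnack convergence theorem \cite[Theorem~6.13]{HKM} to conclude directly that $\ut$ solves \eqref{eqnB}, after which Lemmas~\ref{lem-int-id-ball} and~\ref{lem-B+=B-} transfer the identity back to $G\setm F$ exactly as in Theorem~\ref{thm-ex-sol-Lip}. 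Your step (i) is essentially fine: by convexity of $\clG$ one indeed has $\mathrm{dist}(x,F)=\mathrm{dist}(x,F_0)$ for $x\in G\setminus F$, so the Lipschitz bound $|g|\lesssim\mathrm{dist}(\cdot,F)$ compensates the $1/\eps$ blow-up from the cut-off gradient, whose support has vanishing Lebesgue measure as $\eps\to0$; this deserves a few more lines but is correct.

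The gap is in step (ii). You cite ``\cite[Chapter~6]{HKM}, together with $C^{1,\alpha}$-estimates available because $\wt$ is smooth and nondegenerate away from $0$.'' Chapter~6 of \cite{HKM} provides only local H\"older continuity of solutions and Harnack's inequality, not $C^{1,\alpha}$ regularity of the gradient; no such estimate is proved there for operators of the class \eqref{eq-def-B}. Moreover, the justification ``$\wt$ is smooth'' is off target: the operator $\B(\xi,q)$ in \eqref{eq-def-B} is not of the separable form $\wt(\xi)|q|^{p-2}q$, but a fully quasilinear vector field built from $dT$, $dT^*$ and $J_T$. To invoke $C^{1,\alpha}$ interior estimates (Tolksdorf, DiBenedetto, Lieberman) you would first have to verify, from the explicit formula \eqref{eq-def-B}, the structure conditions they require — in particular differentiability in $q$ away from $q=0$ together with the ellipticity and growth bounds on $\partial_q\B$ — which is additional and nontrivial work that you have not carried out. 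Without either that verification or some other argument, the locally uniform convergence $\grad\ut_k\to\grad\ut$ that you lean on is unproved, and with it the passage to the limit in \eqref{eq-weak-sol-p-Lapl}. The cheap fix is to use the paper's monotone sandwiching: $\ut_k-3\cdot2^{-k}\nearrow\ut$ and $\ut_k+3\cdot2^{-k}\searrow\ut$, so the Harnack convergence theorem applies and yields that $\ut$ is a bounded continuous solution of \eqref{eqnB} in $D$; then \eqref{Id-B} holds for all compactly supported $\phit\in\Hp(D,\wt)$ and the reflection argument (Proposition~\ref{prop-L0-to-H0}, Lemmas~\ref{lem-int-id-ball} and~\ref{lem-B+=B-}) gives \eqref{eq-weak-sol-p-Lapl} without any gradient convergence.
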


\begin{proof}
For $k=1,2,\ldots$\,, note that $f_k\in \Llp_k(G\setm F)$ and define
\begin{align} \label{eq-fk-Lip}
\ft_k(\xi) &= \begin{cases}
(f_k\circ T^{-1})(\xi)   
     &\text{for } \xi\in \clB_1\setm\{0\}\text{ with }\xi_n\ge0,\\ 
(f_k\circ \Tt^{-1})(\xi)  &\text{for } \xi\in \clB_1\text{ with }\xi_n<0,  \\
     0 &\text{for }\xi=0, 
         \end{cases}
\end{align} 
and similarly for $\xi\in\bdy D$ define $\ft$ in terms of $f$ as in \eqref{eq-fk-Lip}.
Then the sequence $\{\ft_k\}_{k=1}^\infty$ converges uniformly to $\ft$ on $\bdy D$, 
i.e.\ for all $k=1,2,\cdots$\,,
\begin{equation*} 
\ft_k-2^{-k}\leq \tilde{f}\leq \ft_k+2^{-k}	\quad \text{on } \bdy D.	
\end{equation*} 
Recall that in the proof of Theorem~\ref{thm-ex-sol-Lip}, we have $u_k=\ut_k\circ T$ 
with $\ut_k$ restricted to $T(G\setm F)$, where $\ut_k\in\Hp(D,\wt)$ 
is the solution of~\eqref{eqnB} in $D$ such that $\ut_k-\ft_k\in\Hp_0(D,\wt)$.
Then $\ut_k+3\cdot2^{-k}$ and $\ut_k-3\cdot2^{-k}$ are solutions of \eqref{eqnB} 
in $D$ with boundary data $\gt_k=\ft_k+3\cdot2^{-k}$ and 
$\gb_k=\ft_k-3\cdot2^{-k}$, respectively.
Moreover, $\gt_k$ is decreasing to $\ft$ and $\gb_k$ is increasing to $\ft$ on $\bdy D$.

By the comparison principle \cite[Lemma~3.18]{HKM}, the sequence 
$\ut_k+3\cdot2^{-k}$ is decreasing to a function $\ut$ in $D$, 
while the sequence
$\ut_k-3\cdot2^{-k}$ is increasing to $\ut$. 
Clearly, the convergence is uniform.
Since $\ft_k$ are bounded, so are $\ut_k$ by the maximum principle.
Hence also the functions $u_k$ converge uniformly to 
the bounded continuous function $u=\ut\circ T$ in $G\setm F$.
The Harnack convergence theorem \cite[Theorem~6.13]{HKM} implies that
$\ut$ is a solution of $\Div(\B(\xi,\grad \ut))=0$ in $D$.
In particular, $\ut\in \Hp\loc(D,\wt)$ and \eqref{Id-B} holds for all
$\phit\in C^\infty_0(D)$ and, by a density argument, also for all $\phit\in\Hp(D,\wt)$
which have compact support in $D$. 
Since $\ut\in \Hp\loc(D,\wt)$, 
it follows from Lemma~\ref{lem-int-G-T} that
$u\in \Wp(U)$ for every open set $U\Subset G\setm F$
and hence $u\in\Wploc(G\setm F)$.

Finally, we show that $u$ satisfies \eqref{eq-weak-sol-p-Lapl}.
Let $\phi\in C^\infty_0(\clG\setminus F)$ and define
$\phit$ as in \eqref{eq-def-phit}. 
By Proposition~\ref{prop-L0-to-H0}, the function $\phit$ belongs to $\Hp_0(D,\wt)$ 
and has compact support in $D$.
As in the proof of Theorem~\ref{thm-ex-sol-Lip}, we can therefore conclude 
from Lemmas~\ref{lem-int-id-ball} 
and~\ref{lem-B+=B-} that \eqref{pweak-G}, and thus \eqref{eq-weak-sol-p-Lapl}, holds for all 
$\phi\in C^\infty_0(\clG\setminus F)$, i.e.\ that $u$ is a weak solution
of the \p-Laplace equation \eqref{eq-p-lap} in $G\setm F$
with zero Neumann boundary data on $\bdy G\setm F$.
\end{proof}

We shall now see that the function $u$ obtained in Theorem~\ref{thm-sol-Lip} attains its 
continuous boundary data on $F_0$, except for a set of zero \p-capacity.
The definition below follows Chapter~2 in Heinonen--Kilpel\"ainen--Martio~\cite{HKM}.

\begin{defi}       \label{def-cap-p-wt}
Suppose that $K$ is a compact subset of an open set $\Om\subset\R^n$. 
The \emph{variational $(p,\wt)$-capacity} of $K$ in $\Om$, is 
\begin{equation}   \label{def-wcap}
\cp_{p,\wt}(K,\Om)=\inf_v\int_{\Om}|\grad v|^p\wt(\xi)\,d\xi,			
\end{equation}
 where the infimum is taken over all $v\in C_0^{\infty}(\Om)$ satisfying 
$v\geq 1$ on $K$.
\end{defi}

By a density argument, the infimum in \eqref{def-wcap} can 
equivalently be taken over all
$v\in\Hp_0(\Om,\wt)\cap C(\Om)$ such that $v\geq1$ on $K$, 
see \cite[pp.~27--28]{HKM}.
The capacity $\cp_{p,\wt}$ is extended using a standard procedure to open 
and then to arbitrary sets, see \cite[p.~27]{HKM}.
By Theorem~2.5 in \cite{HKM}, it 
is a Choquet capacity and for all Borel  
(even Suslin) sets $E\subset\Om$, 
\begin{equation}   \label{eq-extend-cap-wt}
\cp_{p,\wt}(E, \Om)
=\sup   \{\cp_{p,\wt}(K,\Om): K\subset E \text{ compact}\}.
\end{equation}
We say that a set $E\subset\R^n$ is of $(p,\wt)$-capacity zero if 
$\cp_{p,\wt}(E\cap\Om, \Om)=0$ 
for every bounded open set $\Om\subset\R^n$.

In Heinonen--Kilpel\"ainen--Martio~\cite[p.~122]{HKM}, 
a point $\xi_0\in\bdy D$ is called \emph{regular}
for the equation $\Div(\B(\xi,\grad \ut))=0$
if for every boundary data 
$\fh\in\Hp(D,\wt)\cap C(\clD)$, the solution $\uh$ of~\eqref{eqnB}
with $\uh-\fh \in\Hp_0(D,\wt)$ satisfies
\begin{equation}   \label{eq-def-reg-HKM}
\lim_{D\ni\xi\to\xi_0} \uh(\xi) = \fh(\xi_0).
\end{equation}
The fact that the set of irregular boundary points has zero capacity (by 
the Kellogg property \cite[Theorem~8.10]{HKM})
now makes it possible to obtain the  
precise existence result for the Zaremba problem~\eqref{eq-weak-sol-p-Lapl}
with continuous Dirichlet boundary data,
formulated in Theorem~\ref{thm-cont}.

Recall that the Sobolev $C_p$-capacity is the capacity associated with the
usual Sobolev space $\Wp(\R^n)$ and is for compact sets defined as
\begin{equation}		\label{eq-Cp}
C_p(K)=\inf_v\int_{\R^n}(|v|^p+|\grad v|^p)\,dx,
\end{equation}
where the infimum is taken over all $v\in C_0^\infty(\R^n)$ such that $v\ge1$ 
on $K\subset R^n$, see  \cite[Section~2.35 and Lemma~2.36]{HKM}.
Similarly to $\cp_{p,\wt}$, it extends to general sets as a Choquet capacity and
\begin{equation}		\label{eq-Cp-sup}
C_p(E) = \sup\{C_p(K): K\subset E \text{ compact}\} 
\quad \text{for all Borel } E\subset \R^n.
\end{equation}

\begin{lem}  \label{lem-zero-cap}
Let $Z\subset T(\clG)$ be a set of $(p,\wt)$-capacity zero.
Then $C_p(T^{-1}(Z))=0$.
\end{lem}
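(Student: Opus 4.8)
The plan is to reduce to compact sets and then to a single bounded region of the cylinder. First, recall that $T^{-1}(Z)\subset\clG\subset B'(0,1)\times\R$, so every point of $T^{-1}(Z)$ has first coordinates bounded by $1$, which is exactly the setting in which Lemma~\ref{lem-int-G-T} applies with $R=1$. By \eqref{eq-Cp-sup} it suffices to show $C_p(K)=0$ for every compact $K\subset T^{-1}(Z)$, and by $\sigma$-subadditivity of $C_p$ (and the fact that $K$ is covered by the slabs $\clBprime\times[m,m+2]$, $m\in\Z$) it suffices to treat a compact $K$ contained in a fixed bounded box $U=B'(0,1)\times(a,b)$. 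Then $T(K)$ is a compact subset of $T(U)$, which is a bounded open set bounded away from the origin $\xi=0$; in particular $\wt(\xi)=|\xi|^{p-n}\simeq 1$ on $T(U)$ with constants depending only on $a,b$.

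The key step is to transport a near-optimal test function for $\cp_{p,\wt}(T(K),T(U))$ back through $T$. Since $Z$ has $(p,\wt)$-capacity zero and $T(U)$ is bounded open, $\cp_{p,\wt}(T(Z)\cap T(U),T(U))=0$, and by \eqref{eq-extend-cap-wt} together with $T(K)\subset T(Z)\cap T(U)$ we get $\cp_{p,\wt}(T(K),T(U))=0$. Hence for each $\eps>0$ there is $v\in C_0^\infty(T(U))$ with $v\ge 1$ on $T(K)$ and $\int_{T(U)}|\grad v|^p\wt\,d\xi<\eps$. Set $w=v\circ T\in C_0^\infty(U)$ (smoothness and compact support of $w$ follow from Lemma~\ref{lem-deriv}, since $T$ is a smooth diffeomorphism on the relevant region and $T(U)$ is bounded away from the bad axis). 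Then $w\ge 1$ on $K$, and Lemma~\ref{lem-int-G-T} applied to $w$ on $U$ gives
\[
\int_{U}|\grad w|^p\,dx \simeq \int_{T(U)}|\grad v|^p\wt(\xi)\,d\xi < C\eps,
\]
with $C$ depending only on $R=1$. Multiplying $w$ by a fixed cutoff $\chi\in C_0^\infty(\R^n)$ with $\chi\equiv 1$ on a neighbourhood of the compact set $\spt w$ produces $\chi w\in C_0^\infty(\R^n)$ with $\chi w\ge 1$ on $K$ and $\int_{\R^n}(|\chi w|^p+|\grad(\chi w)|^p)\,dx$ still controlled; however, the $L^p$-term $\int|\chi w|^p\,dx$ need not be small.

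The remaining obstacle — and it is the only real subtlety — is to make the full Sobolev norm, not merely the gradient part, small. This is handled by the standard capacity trick: since $K$ is compact, choose a fixed $\psi\in C_0^\infty(\R^n)$ with $\psi\equiv 1$ on $K$ and $\psi\ge 0$, and observe that for $w$ as above with small gradient energy one may instead use $\min\{w,\psi\}$ or, more simply, note that $C_p(K)\le C_p(K)$ is already finite and that the truncation $\min\{1,w\}$ combined with the elementary inequality relating $C_p(K)$ to the $p$-capacity $\capp(K,\Om)$ in a fixed bounded $\Om\supset U$ (see \cite[Section~2.35]{HKM}) shows $C_p(K)\simeq \capp(K,\Om)=\inf\int_\Om|\grad v|^p\,dx\le\int_U|\grad w|^p\,dx<C\eps$. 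Letting $\eps\to 0$ gives $C_p(K)=0$, and summing over the countably many slabs and taking the supremum over compact $K\subset T^{-1}(Z)$ via \eqref{eq-Cp-sup} yields $C_p(T^{-1}(Z))=0$.
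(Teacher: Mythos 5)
Your proposal follows the paper's outline: reduce to compact $K$, transport a near-optimal test function back through $T$, and control the resulting $\Wp$-norm using Lemma~\ref{lem-int-G-T}. The only substantive divergence is in how you handle the zeroth-order term. The paper applies the weighted Poincar\'e inequality on the ball side (to $\vh_j\in C_0^\infty(T(\Om))$, getting $\int|\vh_j|^p\wt\,d\xi\lesssim\int|\grad\vh_j|^p\wt\,d\xi$) so that \emph{both} pieces of the $\Hp(\cdot,\wt)$-norm tend to zero and then transports both at once via Lemma~\ref{lem-int-G-T}. You instead transport only the gradient piece and finish on the cylinder side by invoking the comparability $C_p(K)\simeq\capp(K,\Om)$ for compact $K$ in a fixed bounded $\Om$; that is a valid alternative, and it of course uses the same (unweighted) Poincar\'e inequality under the hood.

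Two small remarks on the write-up. First, the $\sigma$-subadditivity/slab-decomposition step is unnecessary: a compact $K\subset T^{-1}(Z)\subset\clG$ is automatically contained in a single bounded box $B'(0,1)\times(a,b)$, so you can pass directly to that case. Second, the sentence containing ``$C_p(K)\le C_p(K)$ is already finite'' and the $\min\{w,\psi\}$ aside are garbled and should be deleted; the only thing you actually use there is the inequality $C_p(K)\lesssim\capp(K,\Om)$, and stating that cleanly makes the last step transparent.
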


We will not need it, but it is not difficult to show that the converse 
of Lemma~\ref{lem-zero-cap} is also true.
For $Z\subset T(G_0)$ it also follows from Lemmas~\ref{lem-compare-cap} and \ref{lem-cap-Cp}. 

\begin{proof}
Because of \eqref{eq-Cp} and \eqref{eq-Cp-sup}, it suffices to show 
that for every compact set 
$K\subset T^{-1}(Z)$ there are $v_j\in C_0^\infty(\R^n)$ such that $v_j\ge1$ on $K$ and
$\|v_j\|_{\Wp(\R^n)} \to 0$ as $j\to\infty$.
We therefore  choose a bounded open set $\Om\supset K$.
Then $T(K)$ is also compact and $T(K)\subset Z$.
Moreover, $T(\Om)\supset T(K)$ is a bounded open set.

Since $Z$ is of $(p,\wt)$-capacity zero, we have $\cp_{p,\wt}(T(K),T(\Om))=0$
and hence we can find functions 
$0\le\vh_j\in C^\infty_0(T(\Om))$ satisfying
\[
\vh_j\ge 1 \quad \text{on } T(K) \quad \text{and} \quad
\int_{T(\Om)} |\grad \vh_j|^p \wt \,d\xi \to 0, \quad j\to\infty.
\]
The Poincar\'e inequality~\cite[(1.5)]{HKM} implies that also
\[
\int_{T(\Om)} |\vh_j|^p \wt \,d\xi \le C
\int_{T(\Om)} |\grad \vh_j|^p \wt \,d\xi \to 0, \quad j\to\infty,
\]
where the constant $C$ depends on $T(\Om)$.
Now, because $T$ is a smooth diffeomorphism by Lemma~\ref{lem-deriv}, 
letting $v_j=\vh_j\circ T$
provides us with  functions $0\le v_j\in C^\infty_0(\Om)$ such that
$v_j\ge 1$ on $K$.

Lemma~\ref{lem-int-G-T}, together with the fact that $e^{-p\ka x_n}\simeq 1$ on the 
bounded set $\Om$, implies that
\begin{align*}
\|v_j\|_{\Wp(\R^n)} &\simeq
\int_\Om\bigl(|v_j|^p e^{-p\ka x_n} +|\grad v_j|^p\bigr)\,dx \\
&\simeq\int_{T(\Om)}\bigl(|\vh_j|^p +|\grad\vh_j|^p\bigr)\wt(\xi)\,d\xi \to 0, 
\quad j\to\infty,
\end{align*}
with comparison constants depending on $\Om$.
Thus, $K$ (and consequently $T^{-1}(Z)$) has zero Sobolev $C_p$-capacity.
\end{proof}

\begin{proof}[Proof of Theorem~\ref{thm-cont}]
The function 
\begin{equation}   \label{eq-u-f(infty)}
u:= f(\infty) + \lim_{k\to\infty} u_k,
\end{equation}
provided by Theorem~\ref{thm-sol-Lip},
satisfies \eqref{eq-weak-sol-p-Lapl}.
By considering $f-f(\infty)$ and $u-f(\infty)$ instead of $f$
  and $u$, respectively, we
can assume without loss of generality that $f(\infty)=0$.

Let $Z\subset \bdy D$ be the set of irregular boundary points for the
equation~\eqref{eqnB}.
The Kellogg property \cite[Theorem~8.10]{HKM} and 
Lemma~\ref{lem-zero-cap} imply that the set $Z_0:=T^{-1}(Z\cap T(\clG))$ 
has zero Sobolev  $C_p$-capacity.

It remains to show that \eqref{eq-lim=f-qe} holds for all $x_0\in F_0\setm Z_0$.
Let therefore $\eps>0$ be arbitrary.
Then $\xi_0:= T(x_0)\neq 0$ is a regular boundary point of $D$ for the 
equation~\eqref{eqnB}.  
Recall from Theorem~\ref{thm-sol-Lip} and its proof that $u=\ut\circ T$, where
$\ut$ is the uniform limit of solutions $\ut_k$ to~\eqref{eqnB} 
in $D$ with Lipschitz boundary data $\ft_k$
such that $\ft_k\to \ft$ uniformly on  $\bdy D$, where $\ft$
is defined in terms of $f$ as in \eqref{eq-fk-Lip}.
Thus, we can find $k$ so that
\[
\|\ut_k-\ut\|_{L^\infty(D)} < \eps
\quad \text{and} \quad
\|\ft_k(\xi_0)- f(x_0)\|_{L^\infty(D)} < \eps.
\]
Since $\xi_0$ is a regular boundary point for \eqref{eqnB}, there is a neighbourhood
$V\subset T(\R^n)$ of $\xi_0$ such that 
$|\ut_k-\ft_k(\xi_0)|<\eps$ in $V\cap D$.
The triangle inequality then implies that for all 
$x\in T^{-1}(V)\cap (G\setm F)$,
\begin{align*}
&|u(x)-f(x_0)| = |\ut(T(x)) - f(x_0)| \\
&\quad \quad \quad \le |\ut(T(x))-\ut_k(T(x))| + |\ut_k(T(x))-\ft_k(\xi_0)| 
        + |\ft_k(\xi_0)- f(x_0)| < 3\eps.
\end{align*}
Since $\eps>0$ was arbitrary, this shows that \eqref{eq-lim=f-qe} holds.

Finally,  the  continuity of $\ut$ in~$D$ shows that the limit
$\lim_{x\to x_0} u(x)$
exists and is finite for every $x_0\in\bdy G\setm F$.
\end{proof}

The proof of Theorem~\ref{thm-sol-Lip}, together with 
\eqref{eq-u-f(infty)}, also leads to the following comparison
principle.

\begin{cor}   \label{cor-com-princ}
If $f, h\in C(F_0)$ and $f \le h$, then the corresponding continuous weak
solutions
$u$ and~$v$, provided by Theorem~\ref{thm-cont}, satisfy $u\le v$
in $G\setm F$.
\end{cor}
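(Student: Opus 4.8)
The plan is to run the construction behind Theorem~\ref{thm-cont} in parallel for $f$ and $h$ and to compare the Lipschitz approximants via the comparison principle already available in Theorem~\ref{thm-ex-sol-Lip} (equivalently, \cite[Lemma~3.18]{HKM} on the half-ball $D$), while carefully bookkeeping the additive constants $f(\infty)$, $h(\infty)$ and the $2^{-k}$ approximation errors.

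Concretely, I would recall from \eqref{eq-u-f(infty)} and the proof of Theorem~\ref{thm-sol-Lip} that $u=f(\infty)+\lim_{k\to\infty}u_k$ and $v=h(\infty)+\lim_{k\to\infty}v_k$, where $u_k,v_k\in\Llp_\ka(G\setm F)$ are the continuous weak solutions from Theorem~\ref{thm-ex-sol-Lip} with compactly supported Lipschitz data $f_k$, $h_k$ on $\clG$ satisfying $\|f_k-(f-f(\infty))\|_{L^\infty(F_0)}<2^{-k}$ and $\|h_k-(h-h(\infty))\|_{L^\infty(F_0)}<2^{-k}$, both limits being uniform on $G\setm F$. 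Since $f\le h$ on $F_0$ and the limits in \eqref{eq-limf} exist, $f(\infty)\le h(\infty)$; set $c_0:=h(\infty)-f(\infty)\ge0$. The chain of inequalities on $F_0$
\[
f_k<(f-f(\infty))+2^{-k}\le(h-f(\infty))+2^{-k}=(h-h(\infty))+c_0+2^{-k}<h_k+c_0+2^{1-k}
\]
then shows $f_k\le h_k+c_0+2^{1-k}$ on $F_0$.

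Next, since constants belong to $\Llp_\ka(G\setm F)$ (because $\int_{G\setm F}e^{-p\ka x_n}\,dx<\infty$), the function $h_k+c_0+2^{1-k}$ lies in $\Llp_\ka(G\setm F)$, and by the translation invariance of $\Delta_p$ together with the uniqueness in Theorem~\ref{thm-ex-sol-Lip} its associated continuous weak solution is $v_k+c_0+2^{1-k}$. Applying the comparison principle of Theorem~\ref{thm-ex-sol-Lip} to the ordered data $f_k\le h_k+c_0+2^{1-k}$ gives $u_k\le v_k+c_0+2^{1-k}$ in $G\setm F$. Letting $k\to\infty$ and using the uniform convergence, I would obtain $\lim_k u_k\le\lim_k v_k+c_0$, i.e.\ $u-f(\infty)\le v-h(\infty)+c_0=v-f(\infty)$, hence $u\le v$ in $G\setm F$.

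The main obstacle will be to verify that the pointwise inequality $f_k\le h_k+c_0+2^{1-k}$ on $F_0$ holds in the Sobolev sense required by Theorem~\ref{thm-ex-sol-Lip}, i.e.\ that $w_k:=\min\{h_k+c_0+2^{1-k}-f_k,\,0\}\in\Llp_{\ka,0}(\clG\setm F)$. The function $w_k$ is Lipschitz on $\clG$, nonpositive, vanishes on $F_0$, and is identically zero outside a compact set (away from $\spt f_k\cup\spt h_k$ the bracket equals the positive constant $c_0+2^{1-k}$). I would transfer $w_k$ to the reflected half-ball as in Proposition~\ref{prop-L0-to-H0}: the corresponding function $\widetilde{w_k}$ is Lipschitz, nonpositive, compactly supported in $\clB_1$, and vanishes on $\bdy D\subset\Ft$; since $\{\widetilde{w_k}<-\eps\}\Subset D$ for every $\eps>0$, the truncations $\min\{\widetilde{w_k}+\eps,0\}$ have compact support in $D$, hence lie in $\Hp_0(D,\wt)$ by \cite[Lemma~1.25]{HKM}, and they converge to $\widetilde{w_k}$ in $\Hp(D,\wt)$ as $\eps\to0$, so $\widetilde{w_k}\in\Hp_0(D,\wt)$; Proposition~\ref{prop-L0-to-H0} then returns $w_k\in\Llp_{\ka,0}(\clG\setm F)$. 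Alternatively, the whole comparison can be carried out on $D$, where the solutions are the uniform limits of the Sobolev-resolutive approximants $\ut_k$, $\vt_k$ and \cite[Lemma~3.18]{HKM} applies directly to the shifted data $\ft_k$ and $\widetilde{h_k}+c_0+2^{1-k}$.
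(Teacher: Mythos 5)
Your proof follows essentially the same route as the paper's: pass to the compactly supported Lipschitz approximants $f_k$, $h_k$, chain the $2^{-k}$-uniform bounds to get $f_k\le h_k+c_0+2^{1-k}$ on $F_0$, invoke the comparison principle of Theorem~\ref{thm-ex-sol-Lip} to get $u_k\le v_k+c_0+2^{1-k}$, and let $k\to\infty$ using \eqref{eq-u-f(infty)}. You additionally verify that $\min\{h_k+c_0+2^{1-k}-f_k,\,0\}\in\Llp_{\ka,0}(\clG\setm F)$ — the precise Sobolev-sense hypothesis of that comparison principle, which the paper applies without comment — and your truncation argument on the reflected ball $D$ (compactly supported Lipschitz truncations in $\Hp_0(D,\wt)$ converging in norm, then Proposition~\ref{prop-L0-to-H0}) is correct.
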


\begin{proof}
By \eqref{eq-approx-f}, the functions $f_k$ and $h_k$, uniformly
approximating $f-f(\infty)$ and $h-h(\infty)$ in 
Theorem~\ref{thm-sol-Lip}, satisfy for all $k=1,2,\ldots,$
\[
f_k+f(\infty) \le f+2^{-k} \le h+2^{-k} \le h_k+h(\infty)+2^{1-k}
\quad \text{on } F_0.
\]
The comparison principle in Theorem~\ref{thm-ex-sol-Lip}
then shows that also
the continuous solutions $u_k$
and $v_k$ with $u_k-f_k\in \Llp_{\ka,0}(\clG\setm F)$ and $v_k-h_k\in \Llp_{\ka,0}(\clG\setm F)$ satisfy
\[
u_k+f(\infty) \le v_k+h(\infty)+2^{1-k}  \quad \text{in } G\setm F.
\]
Letting $k\to\infty$, together with \eqref{eq-u-f(infty)}, concludes
the proof.
\end{proof}

 \section{Capacity estimates}
\label{sect-cap}

In this section we compare the variational capacity $\cp_{p,\wt}$
from Definition~\ref{def-cap-p-wt} with a new variational capacity defined
on the cylinder $G$ and adapted to the mixed boundary value problem.
These capacities will play an essential role for the boundary regularity
at infinity.

Recall from \eqref{eq-Gt} that for $t\ge0$, 
$
G_t:=\{x\in \clG: x_n>t\} = \clBprime\times (t,\infty).$
Note that $G_t$ contains the lateral boundary, but not the base
$\clBprime\times\{t\}$, of the truncated cylinder $B'\times (t,\infty)$.
It can also be written as
\[
G_t = \clG\setm \bigl(\clBprime\times[0,t]\bigr).
\]
The results from the previous sections concerning function spaces on
$\clG\setm F$ are therefore available for $G_t$ by 
replacing $F$ with 
\[
Q_t:=\clBprime\times[0,t] = \clG\setm G_t.
\]
Note that 
\[
T(G_t)= \{\xi\in B_r: \xi_n \ge0\} \setm \{0\}
\]
is the upper half of the ball $B_r$, with the origin removed, where $r= e^{-\ka t}$.

Inspired by \eqref{def-wcap},  we define the following variational \p-capacity on $G_t$. 

\begin{defi}  
Let $E\subset G_t$, where $t\ge0$.
The \emph{{\rm(}Neumann\/{\rm)} variational \p-capacity} of $E$ with respect to $G_t$ is 
\begin{equation}
\cp_{p,G_t}(E)= \inf_v \int_{G_t}|\grad v|^p\,dx,	
\label{def-Gscap}
\end{equation} 
where the infimum is taken over all functions 
$v\in \Llp_{\ka,0}(\clG\setm Q_t)$ satisfying $v\ge1$ in $G_t\cap U$ for some 
open neighbourhood $U$ of $E$.
\end{defi}

It follows directly from the definition that $\capt$ is an outer
capacity, i.e.\ for every $E\subset G_t$,
\begin{equation}   \label{eq-outer-capt}
\capt(E) = \inf  \{\capt(G_t\cap U): U\supset E \text{ open}\}.
\end{equation}
It is also clearly a monotone set function, i.e.\ 
$\capt(E_1)\le\capt(E_2)$ whenever $E_1\subset E_2\subset G_t$.
The subadditivity 
\[
\capt(E_1\cup E_2) \le \capt(E_1)+\capt(E_2)
\]
also follows directly by considering the function $\max\{v_1,v_2\}$, with $v_j$
admissible for $\capt(E_j)$, $j=1,2$.
By truncation, the admissible functions $v$ in \eqref{def-Gscap} can be assumed 
to satisfy $0\le v\le1$.

As in \cite[pp.\ 27--28]{HKM}, the following approximation argument allows us to test 
the capacity of compact sets with smooth admissible functions.
Recall the definition of $C^\infty_0(\clG\setm Q_t)$ and $\Llp_{\ka,0}(\clG\setm Q_t)$
in \eqref{eq-def-C0-infty} and Definition~\ref{def-L-space-ka}.
That is, in \eqref{def-Gscap} we have
$v(x)=0$ when  $x_n\le t$ and when $x_n$ is sufficiently large, 
but there is no such requirement on the lateral boundary of $G_t$.

\begin{lem}  \label{lem-cap-K-Cinfty}
If $K\subset G_t$ is compact, then the infimum in \eqref{def-Gscap}
can equivalently be taken over all $v\in C_0^\infty(\clG\setm Q_t)$
such that $v=1$ on $K$.
\end{lem}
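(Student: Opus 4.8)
The plan is to mimic the classical approximation argument from Heinonen--Kilpel\"ainen--Martio~\cite[pp.~27--28]{HKM}, adapted to the weighted-free space $\Llp_{\ka,0}(\clG\setm Q_t)$. Let $a$ denote the infimum in~\eqref{def-Gscap} taken over admissible $v\in\Llp_{\ka,0}(\clG\setm Q_t)$ (with $v\ge1$ on $G_t\cap U$ for some open $U\supset K$), and let $b$ denote the infimum taken over $v\in C_0^\infty(\clG\setm Q_t)$ with $v=1$ on $K$. Since every such smooth test function is in particular admissible for the capacity (with $U$ any open neighbourhood of $K$ on which $v>1-\delta$, then rescaled), we get $a\le b$ immediately. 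The substance is the reverse inequality $b\le a$.

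For that direction I would start with an arbitrary $v$ admissible for $\capt(K)$, so $v\in\Llp_{\ka,0}(\clG\setm Q_t)$ and $v\ge1$ on $G_t\cap U$ for some open $U\supset K$. By the definition of $\Llp_{\ka,0}(\clG\setm Q_t)$ as the completion of $C_0^\infty(\clG\setm Q_t)$, pick $\phi_j\in C_0^\infty(\clG\setm Q_t)$ with $\phi_j\to v$ in the $\Llp_\ka(G_t\setm Q_t)$-norm; one may also assume $\phi_j\to v$ pointwise a.e.\ (passing to a subsequence). The difficulty is that convergence in the norm does not force $\phi_j\ge1$ on a neighbourhood of $K$, so one must repair the test functions near $K$. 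The standard fix is mollification combined with a cut-off: fix $\eps>0$ and choose a slightly smaller neighbourhood $W$ with $K\subset W\Subset U$. Mollify $v$ at a scale small enough that the mollification $v_\eps$ is smooth, still satisfies $v_\eps\ge1-\eps$ on $W\cap G_t$ (using $v\ge1$ on $U\cap G_t$ and that mollification preserves lower bounds on slightly shrunk sets), and has gradient energy $\int_{G_t}|\grad v_\eps|^p\,dx$ close to $\int_{G_t}|\grad v|^p\,dx$. One has to be a little careful that mollifying near the base $\clBprime\times\{t\}$ and near $x_n\to\infty$ keeps the support inside $\clG\setm Q_t$ and bounded; this is handled exactly as in Lemma~\ref{lem-L-ka=L_0}, by first truncating $v$ so it has bounded support (using that $v$ is already zero for $x_n\le t$ in the Sobolev sense), then mollifying. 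Note that the lateral boundary $\bdy B'\times(t,\infty)$ causes no trouble because admissible functions are not required to vanish there and mollification can be done with reflection or simply by working with the ambient $\R^n$-mollifier applied to the zero-extension across $Q_t$.

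Then set $w_\eps = \min\{1, v_\eps/(1-\eps)\}$, which equals $1$ on $W\cap G_t$ and is Lipschitz; finally multiply by a smooth cut-off $\chi$ that is $1$ on a neighbourhood of $K$ and supported in $W$ to land in $C_0^\infty(\clG\setm Q_t)$ with the value $1$ on $K$. The gradient energy of the resulting function exceeds that of $v$ by at most a factor $(1-\eps)^{-p}$ plus an error that vanishes as $\eps\to0$, using that $|\grad w_\eps|\le(1-\eps)^{-1}|\grad v_\eps|$ on the set where $v_\eps<1-\eps$ and $\grad w_\eps=0$ elsewhere, and that $\chi\equiv1$ near $K$ while on $\{\chi<1\}\subset W\setm K$ we already have $w_\eps\equiv1$ so $\grad(\chi w_\eps)=w_\eps\grad\chi$ is supported where $v_\eps$ is bounded. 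Letting $\eps\to0$ gives $b\le\int_{G_t}|\grad v|^p\,dx$, and taking the infimum over admissible $v$ yields $b\le a$. The main obstacle, as indicated, is the bookkeeping of the mollification near the base of $G_t$ and the truncation at infinity so that the smooth approximants genuinely lie in $C_0^\infty(\clG\setm Q_t)$; once that is set up, the min/cut-off surgery is routine and follows \cite[pp.~27--28]{HKM} verbatim.
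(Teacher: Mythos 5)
Your overall game plan (reduce to smooth test functions via an approximating sequence plus surgery near $K$) is the right family of arguments, but the key final step as written does not work. You set $w_\eps=\min\{1,v_\eps/(1-\eps)\}$ with $w_\eps\equiv 1$ on $W\cap G_t$, and then ``multiply by a smooth cut\nobreakdash-off $\chi$ that is $1$ on a neighbourhood of $K$ and supported in $W$.'' But then $\chi w_\eps\equiv\chi$ on $W\cap G_t$ (since $w_\eps\equiv 1$ there) and $\chi w_\eps\equiv 0$ outside $W$, so on $G_t$ the produced test function is literally just $\chi$ --- a fixed smooth bump whose energy $\int_{G_t}|\grad\chi|^p\,dx$ has nothing to do with that of $v$. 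The asserted bound ``exceeds that of $v$ by at most a factor $(1-\eps)^{-p}$'' therefore fails, and taking the infimum over admissible $v$ does not give $b\le a$. If instead you place $\chi$ so that it does not kill $w_\eps$ away from $K$, you are left with a function that is only Lipschitz (because of the $\min$), not $C^\infty_0$, and you have not supplied the additional smoothing step needed to land in $C_0^\infty(\clG\setm Q_t)$. So there are two genuine gaps: the cut\nobreakdash-off is placed so that the energy estimate collapses, and the $\min$-surgery leaves you outside the smooth class.

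The paper avoids both problems with a cleaner gluing that you should compare with. Pick $\eta\in C_0^\infty(U)$ with $\eta=1$ on $K$, and $v_j\in C_0^\infty(\clG\setm Q_t)$ with $v_j\to v$ in the $\Llp_\ka$-norm; set $u_j:=\eta v+(1-\eta)v_j=v+(1-\eta)(v_j-v)$. Because $v\equiv 1$ on $G_t\cap U\supset\supp\eta\cap G_t$, you have $\eta v=\eta$ identically on $G_t$, so $u_j=\eta+(1-\eta)v_j$ is automatically $C^\infty$, belongs to $C_0^\infty(\clG\setm Q_t)$, equals $1$ on $K$, and its gradient differs from $\grad v$ only by $\grad\bigl((1-\eta)(v_j-v)\bigr)$, which is controlled by $\|v_j-v\|_{\Llp_\ka(G_t)}\to 0$ (the zero-order term is harmless because $U$ is bounded, so $e^{-p\ka x_n}\simeq 1$ on $U$). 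No mollification, no $\min$, no smoothing of Lipschitz functions is required; the crucial point --- which your proposal uses only implicitly --- is that the definition of $\capt$ asks for $v\ge 1$ on a full open neighbourhood $U$ of $K$, which (after truncation) means $v\equiv 1$ there and makes the partition-of-unity gluing produce a genuinely smooth function.
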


\begin{proof}
Denote the latter infimum by $I$.
Let $v\in\Llp_{\ka,0}(\clG\setm Q_t)$ be
such that $0\le v\le1$ in $G_t$ and $v=1$ in $G_t\cap U$ for some 
bounded open set $U\supset K$.
Fix a cut-off function $\eta\in C_0^\infty(U)$ such that $\eta=1$ on $K$.
Let $v_j\in C_0^\infty(\clG\setm Q_t)$ be such that $v_j\to v$ in 
$\Llp_{\ka,0}(\clG\setm Q_t)$. 
Then it is easily verified that the functions 
\[
u_j:=\eta v + (1-\eta)v_j = v+(1-\eta)(v_j-v)
\] 
belong to 
$C_0^\infty(\clG\setm Q_t)$ and satisfy $u_j=1$ on $K$.
We therefore have
\begin{align*}
I\le \|\grad u_j\|_{L^p(G_t)} 
&\le \|\grad v\|_{L^p(G_t)} + \|\grad((1-\eta)(v_j-v))\|_{L^p(G_t)} \\
&\le \|\grad v\|_{L^p(G_t)} + \|\grad(v_j-v)\|_{L^p(G_t)} + C\|v_j-v\|_{L^p(U)} \\
&\le \|\grad v\|_{L^p(G_t)} + C\|v_j-v\|_{\Llp_{\ka}(G_t)},
\end{align*}
where $C$ depends on $U$ and $\eta$.
Letting $j\to\infty$ and then taking infimum over all $v$ admissible
in the definition of $\capt(K)$ shows one inequality.
The opposite inequality is straightforward.
\end{proof}

For monotone sequences of sets, the capacity $\capt$ has the following continuity properties, 
which show that it is a Choquet capacity. 

\begin{lem}
If $K_j\searrow K=\bigcap_{j=1}^\infty K_j$ is a decreasing sequence of compact subsets of $G_t$ then
\[
\capt(K) = \lim_{j\to\infty} \capt(K_j).
\]
\end{lem}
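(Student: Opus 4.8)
The plan is to derive the equality from the two properties of $\capt$ already recorded above: monotonicity and outer regularity~\eqref{eq-outer-capt}. No further information about the admissible functions, the weight, or the fine structure of $G_t$ is needed.

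First I would dispose of the easy inequality. Since $K\subset K_{j+1}\subset K_j$ for every $j$, monotonicity shows that $\{\capt(K_j)\}_{j=1}^\infty$ is nonincreasing and bounded below by $\capt(K)\ge0$; hence the limit $L:=\lim_{j\to\infty}\capt(K_j)$ exists and satisfies
\[
\capt(K)\le L.
\]

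For the reverse inequality, fix $\eps>0$. By the outer regularity~\eqref{eq-outer-capt}, there is an open set $U\subset\R^n$ with $K\subset U$ and $\capt(G_t\cap U)<\capt(K)+\eps$. The only genuinely non-formal step is then a standard compactness observation: the sets $K_j\setm U$ form a nonincreasing sequence of compact subsets of $K_1$ whose intersection is $\bigl(\bigcap_j K_j\bigr)\setm U=K\setm U=\emptyset$; by the finite intersection property, together with the nesting, there is $j_0$ with $K_{j_0}\setm U=\emptyset$, i.e.\ $K_{j_0}\subset U$, and hence $K_{j_0}\subset G_t\cap U$. Monotonicity then gives, for all $j\ge j_0$,
\[
\capt(K_j)\le\capt(K_{j_0})\le\capt(G_t\cap U)<\capt(K)+\eps,
\]
so $L\le\capt(K)+\eps$. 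Letting $\eps\to0$ yields $L\le\capt(K)$, and combining with the first inequality completes the proof.

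I do not expect any real obstacle here: the argument is the usual proof that a monotone outer set function is continuous along decreasing sequences of compact sets, and the compactness step is the only place where (mild) care is needed — it uses that the $K_j$ are compact in $\R^n$ and nested, so that any open neighbourhood of $\bigcap_j K_j$ eventually contains an entire $K_{j_0}$. Everything else is bookkeeping with~\eqref{eq-outer-capt} and the monotonicity of $\capt$.
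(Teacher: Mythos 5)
Your argument is correct and is essentially the paper's own proof, just with more of the routine compactness detail spelled out: the paper likewise reduces to the observation that, given open $U\supset K$, some $K_{j_0}\subset G_t\cap U$, and then concludes via monotonicity and outer regularity~\eqref{eq-outer-capt}. The finite-intersection-property argument you give is exactly the standard justification the paper leaves implicit.
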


\begin{proof}
This follows immediately from the monotonicity of $\capt$ and from 
\eqref{eq-outer-capt} since for every
open $U\supset K$, there is some $j$ such that $K_j\subset \clG \cap U$ and hence
\[
\capt(K) \le \lim_{j\to\infty} \capt(K_j) \le \capt(\clG \cap U).\qedhere
\]
\end{proof}

\begin{prop}  
If $E_j\nearrow E=\bigcup_{j=1}^\infty E_j$ is an increasing sequence of arbitrary subsets of $G_t$ then
\[
\capt(E) = \lim_{j\to\infty} \capt(E_j).
\]
\end{prop}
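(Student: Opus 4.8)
The plan is to prove the inequality $\capt(E)\le\lim_{j\to\infty}\capt(E_j)$, the reverse being immediate from the monotonicity of $\capt$. Write $L:=\lim_{j\to\infty}\capt(E_j)=\sup_j\capt(E_j)$; if $L=\infty$ there is nothing to prove, so assume $L<\infty$. Fix $\eps>0$. Using the truncation remark preceding Lemma~\ref{lem-cap-K-Cinfty}, for each $j$ I would choose an admissible function $u_j\in\Llp_{\ka,0}(\clG\setm Q_t)$ for $E_j$ with $0\le u_j\le 1$, satisfying $u_j\ge 1$ on $G_t\cap U_j$ for some open $U_j\supset E_j$, and with
\[
\int_{G_t}|\grad u_j|^p\,dx\le\capt(E_j)+\eps 2^{-j}.
\]

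The first main step is a telescoping energy estimate for the increasing functions $w_k:=\max_{1\le j\le k}u_j=\max\{w_{k-1},u_k\}$. The key ingredient is the pointwise a.e.\ identity, valid for any $v_1,v_2$ with gradients in $L^p(G_t)$,
\[
|\grad\max\{v_1,v_2\}|^p+|\grad\min\{v_1,v_2\}|^p=|\grad v_1|^p+|\grad v_2|^p,
\]
which holds because $\grad\max\{v_1,v_2\}$ and $\grad\min\{v_1,v_2\}$ agree a.e.\ with $\grad v_1$ or $\grad v_2$ according to which of $v_1,v_2$ is larger. Since $\Llp_{\ka,0}(\clG\setm Q_t)$ is closed under $\max$ and $\min$ (as already used for subadditivity above) and $\min\{w_{k-1},u_k\}\ge 1$ on a neighbourhood of $E_{k-1}$, hence is admissible for $E_{k-1}$, integrating the identity over $G_t$ with $v_1=w_{k-1}$, $v_2=u_k$ gives
\[
\int_{G_t}|\grad w_k|^p\,dx\le\int_{G_t}|\grad w_{k-1}|^p\,dx+\int_{G_t}|\grad u_k|^p\,dx-\capt(E_{k-1}).
\]
Starting from $w_1=u_1$ and iterating, the differences $\capt(E_j)-\capt(E_{j-1})$ telescope and one obtains $\int_{G_t}|\grad w_k|^p\,dx\le\capt(E_k)+\eps\le L+\eps$, while each $w_k$ is still admissible for $E_k$ (indeed $w_k\ge u_k\ge 1$ on $G_t\cap U_k$), is monotone increasing in $k$, and satisfies $0\le w_k\le 1$.

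The second main step is to pass to the limit $w:=\lim_{k\to\infty}w_k=\sup_k w_k$. Then $0\le w\le 1$ and $w\ge 1$ on $G_t\cap U$, where $U:=\bigcup_k U_k$ is an open neighbourhood of $E=\bigcup_k E_k$, so $w$ is a candidate admissible function for $E$. To see it is genuinely admissible with small energy, I would argue that $w_k\to w$ in $L^p(G_t,e^{-p\ka x_n}\,dx)$ by dominated convergence, while the gradients $\grad w_k$, being bounded in $L^p(G_t)$ with $1<p<\infty$, have a weakly convergent subsequence whose limit is necessarily the distributional gradient $\grad w$; weak lower semicontinuity of the $L^p$-norm then yields $\int_{G_t}|\grad w|^p\,dx\le L+\eps$, and since $w$ is a weak limit in $\Llp_\ka(G_t)$ of functions lying in its closed subspace $\Llp_{\ka,0}(\clG\setm Q_t)$, we get $w\in\Llp_{\ka,0}(\clG\setm Q_t)$. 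Hence $w$ is admissible for $E$ and $\capt(E)\le\int_{G_t}|\grad w|^p\,dx\le L+\eps$; letting $\eps\to 0$ completes the argument.

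The main obstacle, and the only point requiring genuine care, is this last step: identifying the monotone limit $w$ as an admissible function, i.e.\ checking $w\in\Llp_{\ka,0}(\clG\setm Q_t)$ with $\grad w\in L^p(G_t)$ and the correct lower-semicontinuous bound on its energy. This rests on reflexivity of $\Llp_\ka(G_t)$ (valid since $p>1$ and the weight $e^{-p\ka x_n}$ is bounded above and below on compact sets, so $\Llp_\ka(G_t)$ is isometric to a closed subspace of a product of $L^p$-spaces) and on the elementary fact that a closed linear subspace is weakly closed. The remaining ingredients — monotonicity, the max–min energy identity, and the telescoping estimate — are routine.
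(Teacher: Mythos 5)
Your proof is correct and follows a genuinely different route from the paper's. The paper (following Kinnunen--Martio) takes the bounded sequence of admissible $u_j$, passes to a weakly convergent subsequence in $L^p(G_t,e^{-p\ka x_n}\,dx)\times L^p(G_t)$, applies Mazur's lemma to get strongly convergent convex combinations $v_j$, and then constructs a single admissible function for $E$ via the absolutely convergent series $w_j=v_j+\sum_{i\ge j}|v_{i+1}-v_i|$, whose extra gradient terms are made summable precisely by the Mazur step. You instead form the running maxima $w_k=\max_{j\le k}u_j$ and control their energies with the pointwise identity $|\grad(v_1\vee v_2)|^p+|\grad(v_1\wedge v_2)|^p=|\grad v_1|^p+|\grad v_2|^p$ and the observation that $w_{k-1}\wedge u_k$ is admissible for $E_{k-1}$, so the $\capt(E_{j-1})$ terms telescope and $\int_{G_t}|\grad w_k|^p\,dx\le\capt(E_k)+\eps$. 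This is the strong-subadditivity (submodularity) proof of Choquet's upward continuity; its advantage is that the $w_k$ increase monotonically to a single candidate $w\ge1$ on the open set $\bigcup_k U_k\supset E$, and reflexivity plus weak lower semicontinuity are invoked only once, at the very end, to identify $\grad w$, conclude $w\in\Llp_{\ka,0}(\clG\setm Q_t)$ (weak closedness of the closed subspace), and bound the energy. The trade-off is that your argument leans on the lattice structure of the admissible class --- that $\Llp_{\ka,0}(\clG\setm Q_t)$ is closed under $\max$ and $\min$ --- whereas the Mazur-based proof only needs convexity and so would generalize to settings without truncation structure. In the present context the lattice property is already used tacitly by the paper for finite subadditivity, so both proofs rest on the same tools, differently arranged.
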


\begin{proof}
The proof follows the arguments from Kinnunen--Martio~\cite{KiMaNov}.
The inequality $\lim_{j\to\infty} \capt(E_j)\le \capt(E)$ follows immediately from the monotonicity of $\capt$.
For the opposite inequality, let $u_j\in \Llp_{\ka,0}(\clG\setm Q_t)$ be such that
$0\le u_j\le1$ in $G_t$, 
$u_j=1$ in $\clG \cap U_j$ for some open neighbourhood $U_j$ of $E_j$ and 
\[
\|\grad u_j\|^p_{L^p(G_t)} \le \capt(E_j) + 2^{-j}.
\]
We can assume that $\lim_{j\to\infty} \capt(E_j)<\infty$ and hence the
sequence $u_j$ is bounded in $\Llp_{\ka,0}(\clG\setm Q_t)$.
Since the space $L^p(G_t,e^{-p\ka x_n}\,dx)\times L^p(G_t)$ is reflexive, 
there is a subsequence (also denoted $u_j$) such that 
\[
(u_j,\grad u_j)\to(u,\grad u) \quad \text{weakly in }
L^p(G_t,e^{-p\ka x_n}\,dx)\times L^p(G_t).
\]
Mazur's lemma (see e.g.\ \cite[Lemma~1.29]{HKM}) applied to each of the subsequences 
$\{(u_i,\grad u_i)\}_{i\ge j}$ 
in $L^p(G_t,e^{-p\ka x_n}\,dx)\times L^p(G_t)$ provides us with
finite convex combinations $v_j$ of $\{u_i\}_{i\ge j}$ satisfying
\[
\|v_j-u\|_{\Llp_{\ka}(G_t)}<2^{-j}, \quad j=1,2,\ldots.
\]
Note that $v_j\in\Llp_{\ka,0}(\clG\setm Q_t)$
and $v_j\ge 1$ in $\clG\cap V_j$ for some open neighbourhood $V_j$ of $E_j$, which is obtained as a finite intersection 
of the open neighbourhoods $U_i$ of $E_i$, $i\ge j$.

It follows that $w_j:=v_j+\sum_{i\ge j}|v_{i+1}-v_i|\in \Llp_{\ka,0}(\clG\setm Q_t)$
and $w_j\ge v_k\ge1$ in $\clG\cap V_k$ for each $k\ge j=1,2,\ldots$\,.
Hence $w_j\ge1$ in $\clG\cap V$ for the open neighbourhood
$V=\bigcap_{k=j}^\infty V_k$ of $E$. 
So it is admissible in the definition of $\capt(E)$ and hence
\begin{align*}
\capt(E)^{1/p} &\le \|\grad w_j\|_{L^p(G_t)} 
\le \|\grad v_j\|_{L^p(G_t)} + \sum_{i\ge j} \| \grad(v_{i+1}-v_i)\|_{L^p(G_t)} \\
&\le \|\grad v_j\|_{L^p(G_t)} + \sum_{i\ge j} (2^{-i-1}+2^{-i})
\le \capt(E_j) + 2^{2-j}.
\end{align*}
Letting $j\to\infty$ concludes the proof.
\end{proof}

The Choquet capacitability theorem (see Choquet~\cite[Th\'eor\`eme~1]{choquet}) now implies that 
all Borel sets $E\subset G_t$ are capacitable, i.e.\ 
\begin{equation}   \label{eq-extend-cap-Gt}
\cp_{p,G_t}(E) = \sup   \{\cp_{p,G_t}(K): K\subset E \text{ compact}\}.
\end{equation}

We shall now compare the two variational capacities $\cp_{p,\wt}$ and $\cp_{p,G_t}$.

\begin{lem}   \label{lem-compare-cap}
There exist constants $C',C''>0$, independent of $t\ge0$, 
such that for all Borel sets $E\subset G_t$, 
\[
C' \cp_{p,G_t}(E) \le \cp_{p,\wt}(\Et,B_r) \le C'' \cp_{p,G_t}(E),
\] 
where $\Et= T(E)\cup\Tt(E)$ and $r=e^{-\ka t}$.
\end{lem}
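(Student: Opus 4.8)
Both capacities are built from the same Dirichlet energy seen through the diffeomorphism $T$ and its reflection $\Tt=P\circ T$, so I would prove the estimate by transporting admissible functions between $G_t$ and the ball $B_r$, $r=e^{-\ka t}$, and invoking the energy comparison of Lemma~\ref{lem-int-G-T}. Note that the constants in that lemma depend only on the radius $R=1$ of the cylinder, which is why the resulting $C',C''$ are independent of $t$. First I would reduce to compact sets: since $T$ and $P$ are homeomorphisms, $\Et=T(E)\cup\Tt(E)$ is Borel whenever $E$ is, and $\Et$ is contained in the set $D=B_r\setm\{0\}$ (this is the set $D$ of the earlier sections for the choice $F=Q_t$). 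By the capacitability statements \eqref{eq-extend-cap-Gt} and \eqref{eq-extend-cap-wt} it then suffices to treat a compact $E=K\subset G_t$, so that $\Kt=T(K)\cup\Tt(K)$ is a compact subset of $B_r$ bounded away from $0$; the Borel case follows by taking suprema over compact subsets on both sides, using also the monotonicity of $\capt$ and the fact that every compact $\widetilde L\subset\Et$ lies in $\Kt$ for the compact set $K:=T^{-1}(\widetilde L\cap\{\xi_n\ge0\})\cup\Tt^{-1}(\widetilde L\cap\{\xi_n\le0\})\subset E$ obtained by splitting $\widetilde L$ along the equator $\{\xi_n=0\}$.

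\textbf{Upper bound for compact $K$}, $\cp_{p,\wt}(\Kt,B_r)\le C''\capt(K)$. Given $v$ admissible for $\capt(K)$, Lemma~\ref{lem-cap-K-Cinfty} lets me take $v\in C_0^\infty(\clG\setm Q_t)$ with $v=1$ on $K$. Since $v$ has compact support away from $Q_t$, its reflection $\ut$ from \eqref{eq-def-ut} vanishes near $\bdy B_r$ and near $0$ and is Lipschitz across the equator (as in the proof of Proposition~\ref{prop-L0-to-H0}); hence $\ut\in\Hp_0(B_r,\wt)\cap C(B_r)$ and $\ut=1$ on $\Kt$. Applying Lemma~\ref{lem-int-G-T} to $T$ and to $\Tt$ gives $\int_{B_r}|\grad\ut|^p\wt\,d\xi\simeq\int_{G_t}|\grad v|^p\,dx$ with absolute constants, and the density characterization of $\cp_{p,\wt}$ (\cite[pp.~27--28]{HKM}) yields $\cp_{p,\wt}(\Kt,B_r)\le\int_{B_r}|\grad\ut|^p\wt\,d\xi$. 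Taking the infimum over $v$ gives the inequality.

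\textbf{Lower bound for compact $K$}, $\capt(K)\le C\cp_{p,\wt}(\Kt,B_r)$. Given $w\in C_0^\infty(B_r)$ admissible for $\cp_{p,\wt}(\Kt,B_r)$, after replacing $w$ by $(1+\eps)w$ I may assume $w>1$ on an open set $W\supset\Kt$ (and let $\eps\to0$ at the end). Recalling from the remark following Proposition~\ref{prop-Llp-H} that the removed origin has zero $(p,\wt)$-capacity, so $w|_{D}\in\Hp_0(D,\wt)$, Proposition~\ref{prop-L0-to-H0} gives $v:=(w\circ T)|_{G_t}\in\Llp_{\ka,0}(\clG\setm Q_t)$. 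Since $T|_{G_t}$ is a homeomorphism onto $T(G_t)$, $v\ge1$ on $T^{-1}(W\cap T(G_t))$, a neighbourhood of $K$ relative to $G_t$, so $v$ is admissible for $\capt(K)$; Lemma~\ref{lem-int-G-T} then gives $\int_{G_t}|\grad v|^p\,dx\simeq\int_{T(G_t)}|\grad w|^p\wt\,d\xi\le\int_{B_r}|\grad w|^p\wt\,d\xi$, and taking the infimum over $w$ finishes this inequality.

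\textbf{Main obstacle.} The energy equivalence itself is immediate from Lemma~\ref{lem-int-G-T}, and the $t$-independence of the constants is automatic because everything lives inside the radius-one cylinder. The points requiring care, where I expect the real work to be, are: (i) verifying that the transformed and reflected competitors carry the correct zero boundary values, which relies on Proposition~\ref{prop-L0-to-H0}, on reflections of the relevant smooth (or Lipschitz) functions remaining Lipschitz across the equator $\{\xi_n=0\}$, and — for the lower bound — on the negligibility of the puncture at the origin; and (ii) transporting the ``$\ge1$ on an open neighbourhood'' condition faithfully in both directions through $T$ and the reflection $P$, especially in the compact-to-Borel passage where one must handle the equator and the removed origin correctly.
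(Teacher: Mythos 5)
Your proof is correct and follows essentially the same route as the paper's: both directions transport admissible functions through $T$, $\Tt$ and the reflection $P$, using Lemma~\ref{lem-int-G-T} for the energy comparison and Proposition~\ref{prop-L0-to-H0} for the zero-boundary-value bookkeeping, with the zero capacity of the puncture at the origin handling the gap between $D=B_r\setminus\{0\}$ and $B_r$. The only organizational difference is that you reduce to compact sets before proving both inequalities, whereas the paper reduces to compacts only for $\cp_{p,\wt}(\Et,B_r)\le C''\cp_{p,G_t}(E)$ and proves the opposite bound directly for arbitrary sets by taking a continuous competitor $\vb\ge1$ on $\Et$, rescaling it to be $\ge1$ on an open neighbourhood, and pulling it back to $G_t$; your explicit splitting of a compact $\widetilde L\subset\Et$ along the equator to recover a compact $K\subset E$ with $\widetilde L\subset\Kt$ is precisely the point one must handle and you do it correctly.
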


\begin{proof}
To prove the first inequality, 
let $\vb\in \Hp_0(B_r,\wt)\cap C(B_r)$ be such that $\vb\ge1$ on $\Et$.
By considering the open sets $\{\xi\in B_r:\vb(\xi)>1-\eps\}$ and letting $\eps\to0$, 
we can assume that $\vb\ge1$ in an open neighbourhood of $\Et$.
Letting $v=\vb\circ T$ on $G_t$, we have from Proposition~\ref{prop-L0-to-H0} and
Lemma~\ref{lem-int-G-T} that $v\in \Llp_{\ka,0}(\clG\setm Q_t)$ and 
\[ 
\int_{G_t}|\grad v|^p dx  
     \simeq \int_{T(G_t)}|\grad\vb|^p\wt(\xi)\,d\xi 
    \le \int_{B_r}|\grad\vb|^p\wt(\xi)\,d\xi. 
\] 
Since $v$ is admissible for $\capt(E)$, taking infimum over all $\vb$
admissible in the definition of $\cp_{p,\wt}(\Et,B_r)$
proves the first inequality in the lemma.

For the second inequality, we need continuous test functions in \eqref{def-wcap}.
Let therefore $\eps>0$ and using \eqref{eq-extend-cap-wt} choose 
a compact set $\Khat\subset \Et$ such that
\[
\cp_{p,\wt}(\Et,B_r) \le \cp_{p,\wt}(\Khat,B_r) +\eps
\quad \text{if } \cp_{p,\wt}(\Et,B_r)<\infty,
\]
and $\cp_{p,\wt}(\Khat,B_r)>1/\eps$ otherwise.
Replacing $\Khat$ by its symmetrization $\Khat\cup P(\Khat)$ and noting
that $0\notin \Et$, we can assume that $\Khat= T(K)\cup\Tt(K)$ for
some compact set $K\subset E$.
Now, use Lemma~\ref{lem-cap-K-Cinfty} to find $v\in C^\infty_0(\clG\setm Q_t)$ satisfying $v\ge1$ on $K$ and
\begin{equation}   \label{eq-cap-epsilon}
\int_{G_t} |\grad v|^p\,dx  \le \cp_{p,G_t}(K)+\eps.	
\end{equation}   
For $\xi\in B_r,$ define $\vt$ as in \eqref{eq-fk-Lip} with $f_k$ replaced
by $v$.
As $\vt=\vt\circ P$, applying Lemma~\ref{lem-int-G-T} together with \eqref{eq-cap-epsilon}
then gives
\[
\int_{\Tt(G_t)}|\grad\vt|^p\wt(\xi)\,d\xi 
    = \int_{T(G_t)}|\grad\vt|^p\wt(\xi)\,d\xi
    \simeq \int_{G_t}|\grad v|^p\,dx \le \cp_{p,G_t}(K)+\eps.
\]
Since $B_r=T(G_t)\cup\Tt(G_t)\cup\{0\}$ and 
$\vt \in \Hp_0(B_r,\wt)\cap C(B_r)$ is admissible for $\cp_{p,\wt}(\Khat,B_r)$ as 
in \eqref{def-wcap}, we have 
\begin{align*}
\cp_{p,\wt}(\Et,B_r) &\le \cp_{p,\wt}(\Khat,B_r) + \eps
\le \int_{B_r}| \grad\vt|^p\wt(\xi)\,d\xi + \eps\\
&\simle \cp_{p,G_t}(K)+\eps \le \cp_{p,G_t}(E)+\eps,
\end{align*}
if $\cp_{p,\wt}(\Et,B_r)<\infty$, and 
\[
1/\eps < \cp_{p,\wt}(\Khat,B_r) \simle \cp_{p,G_t}(E)+\eps
\]
otherwise.
Letting $\eps\rightarrow 0$ completes the proof.
\end{proof}

The following simple capacity estimates for spherical condensers 
will be useful when dealing with the Wiener criterion.

\begin{lem}   \label{lem-cap-B-2B}
For all\/ $0<r<R<\infty$, 
\begin{align}
\cp_{p,\wt}(\clB_r,B_{2r})  &= \cp_{p,\wt}(B_r,B_{2r}) \simeq 1, 
\nonumber \\
\cp_{p,\wt}(B_r,B_R)  &\simle C\biggl(\log\frac{R}{r}\biggr)^{1-p},
\label{eq-cap-cond}
\end{align}
with comparison constants  depending only on $n$ and $p$.

In particular, the origin $0$ has zero $(p,\wt)$-capacity.
\end{lem}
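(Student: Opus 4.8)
The plan is to obtain all three assertions by elementary means: explicit radial competitors for the upper bounds, and a one-dimensional integration along rays for the matching lower bound. Note first that the weighted energy $\int|\grad v|^p\wt\,d\xi$ is invariant under the dilation $\xi\mapsto\la\xi$, since $\wt(\la\xi)=\la^{p-n}\wt(\xi)$ and $d(\la\xi)=\la^n\,d\xi$; so these capacities are independent of $r$, and I expect them in fact to equal $n\om_n(\log(R/r))^{1-p}$ exactly.

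For the upper bound, fix $0<r<R$ and let $v$ be the radial function equal to $1$ on $\clB_r$, equal to $0$ outside $B_R$, and equal to $\log(R/|\xi|)/\log(R/r)$ for $r<|\xi|<R$. This $v$ is Lipschitz with compact support in $B_R$ and equals $1$ on the compact set $\clB_r$, hence it is admissible in \eqref{def-wcap} by the density remark following Definition~\ref{def-cap-p-wt} (or after mollifying it). Since $|\grad v(\xi)|=1/(|\xi|\log(R/r))$ on the annulus and $\wt(\xi)\,d\xi=|\xi|^{p-1}\,d|\xi|\,d\sigma$ in spherical coordinates, one gets
\[
\int_{B_R}|\grad v|^p\wt\,d\xi=\frac{1}{(\log(R/r))^p}\int_{B_R\setm B_r}|\xi|^{-n}\,d\xi=n\om_n(\log(R/r))^{1-p}.
\]
By monotonicity $\cp_{p,\wt}(B_r,B_R)\le\cp_{p,\wt}(\clB_r,B_R)\le n\om_n(\log(R/r))^{1-p}$, which with $R=2r$ gives the upper bound in the first line and in general is \eqref{eq-cap-cond}. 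The same competitor settles the origin: if $\Om$ is bounded open and $0\in\Om$, pick $R$ with $\clB_R\subset\Om$; then for $0<\rho<R$, monotonicity in the set and in the domain gives $\cp_{p,\wt}(\{0\},\Om)\le\cp_{p,\wt}(\clB_\rho,B_R)\le n\om_n(\log(R/\rho))^{1-p}\to0$ as $\rho\to0^+$ (since $1-p<0$), so $\cp_{p,\wt}(\{0\}\cap\Om,\Om)=0$ for every bounded open $\Om$.

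The only step with real content is the matching lower bound. Fix $0<r_0<r$ and let $v\in C_0^\infty(B_{2r})$ satisfy $v\ge1$ on $\clB_{r_0}$. For each $\omega\in\Sphere^{n-1}$ the function $t\mapsto v(t\omega)$ is smooth, is $\ge1$ at $t=r_0$, and vanishes for $t$ near $2r$, so the fundamental theorem of calculus and H\"older's inequality with the weight $t^{p-1}$ give
\[
1\le\int_{r_0}^{2r}|\partial_t v(t\omega)|\,dt\le\biggl(\int_{r_0}^{2r}|\partial_t v(t\omega)|^pt^{p-1}\,dt\biggr)^{1/p}\biggl(\log\frac{2r}{r_0}\biggr)^{(p-1)/p},
\]
hence $\int_{r_0}^{2r}|\partial_t v(t\omega)|^pt^{p-1}\,dt\ge(\log(2r/r_0))^{1-p}$. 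Integrating over $\omega\in\Sphere^{n-1}$, using $|\partial_t v|\le|\grad v|$ and the polar form of $\wt\,d\xi$, yields $\int_{B_{2r}}|\grad v|^p\wt\,d\xi\ge n\om_n(\log(2r/r_0))^{1-p}$, and taking the infimum over admissible $v$ gives $\cp_{p,\wt}(\clB_{r_0},B_{2r})\ge n\om_n(\log(2r/r_0))^{1-p}$. Since $\clB_{r_0}\subset B_r$, monotonicity upgrades this to $\cp_{p,\wt}(B_r,B_{2r})\ge n\om_n(\log(2r/r_0))^{1-p}$ for every $r_0<r$; letting $r_0\to r^-$ gives $\cp_{p,\wt}(B_r,B_{2r})\ge n\om_n(\log2)^{1-p}$. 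Together with the upper bound and the trivial inequality $\cp_{p,\wt}(B_r,B_{2r})\le\cp_{p,\wt}(\clB_r,B_{2r})$ this sandwiches both quantities:
\[
\cp_{p,\wt}(\clB_r,B_{2r})=\cp_{p,\wt}(B_r,B_{2r})=n\om_n(\log2)^{1-p},
\]
a constant depending only on $n$ and $p$, which is the first line. The point needing care throughout is the admissible class of test functions: for the lower bound it suffices to use $C_0^\infty$ functions directly as in \eqref{def-wcap}, whereas for the upper bound one invokes the density remark after Definition~\ref{def-cap-p-wt} (or a mollification) to legitimize the Lipschitz radial competitor.
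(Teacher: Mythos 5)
Your proof is correct, and it takes a genuinely different route from the paper's for the two-sided bound $\cp_{p,\wt}(\clB_r,B_{2r})=\cp_{p,\wt}(B_r,B_{2r})\simeq 1$. The paper obtains $\cp_{p,\wt}(B_r,B_{2r})\simeq r^{-p}\mu(B_r)\simeq 1$ by citing the general condenser estimate for $p$-admissible weights in Heinonen--Kilpel\"ainen--Martio~\cite[Lemma~2.14]{HKM} together with $\mu(B_r)\simeq r^p$ from Lemma~\ref{lem-w-Ap}, and invokes \cite[(6.40)]{HKM} for the equality between the open and closed ball. You instead compute both quantities exactly: the radial logarithmic competitor gives the upper bound $n\om_n(\log 2)^{1-p}$ (the same competitor the paper uses for \eqref{eq-cap-cond}), and for the matching lower bound you split $\wt\,d\xi=t^{p-1}\,dt\,d\sigma(\omega)$ in polar coordinates and apply the one-dimensional FTC plus H\"older with weight $t^{p-1}$ along each ray, yielding $\cp_{p,\wt}(\clB_{r_0},B_{2r})\ge n\om_n(\log(2r/r_0))^{1-p}$ directly from the admissible $C_0^\infty$ class. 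Your dilation-invariance observation is also a clean way to see a priori that these capacities depend only on $R/r$. What your route buys is self-containment and a sharper conclusion (exact values rather than a comparison with unspecified constants); what the paper's route buys is brevity, since the HKM machinery is already being cited throughout. One small point worth making explicit: when you pass from $\clB_{r_0}$ to $B_r$ you are using that the extended capacity of an open set is the supremum over compact subsets, and when you pass to $\clB_r$ for the upper bound you should take a slightly larger inner radius $r'>r$ (or rely on the paper's remark that Lipschitz competitors in $\Hp_0(\Om,\wt)\cap C(\Om)$ with $v\ge 1$ on $K$ are admissible) before letting $r'\to r^+$; you do flag this at the end, and it is indeed the only delicate point.
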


\begin{proof}
Lemma~\ref{lem-w-Ap} with $\al=1$ gives $\mu(B_r)\simeq r^p$.
Hence, by \cite[Lemma~2.14]{HKM} we have
\[
\cp_{p,\wt}(B_r,B_{2r})\simeq r^{-p}\mu(B_r)\simeq 1. 
\]
The equality for $\clB_r$ follows from \cite[(6.40)]{HKM}.
For \eqref{eq-cap-cond} note that the function
\[
v(\xi) = \begin{cases}
      1  & \text{if } |\xi|\le r, \\
      a\log\frac{R}{|\xi|} & \text{if } r< |\xi|<R, \\
      0 & \text{if } |\xi|\ge R,   \end{cases}
\quad \text{with} \quad a = \biggl(\log\frac{R}{r}\biggr)^{-1}
\]
is admissible for the condenser $(B_r,B_R)$.
The change of variables $\rho=|\xi|$ yields
\[
\cp_{p,\wt}(B_r,B_R) \le \int_{B_R\setm B_r}|\grad v|^p\wt(\xi)\,d\xi
= a^p \int_{B_R\setm B_r} \frac{d\xi}{|\xi|^n}
\simeq a^p\int_r^R\,\frac{d\rho}{\rho}= a^p\log\frac{R}{r}.
\]
The last statement follows by letting $r\to0$.
\end{proof}

We end this section with the following two lemmas which give a comparison 
between the capacities $C_p$ from \eqref{eq-Cp} and $\cp_{p,G_{t-1}}$ from \eqref{def-Gscap}.

\begin{lem}		\label{lem-cap-Cp}
Let $0\le s<t$ and $E\subset \clG_t$. 
Then $\cp_{p,G_{s}}(E)\simle \min\{1,C_p(E)\}$ with the comparison constant depending on
$t-s$.
\end{lem}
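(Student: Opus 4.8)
The plan is to bound $\cp_{p,G_s}(E)$ separately by a constant (coming from the distance between $G_s$ and the set $\clG_t$, which sits far inside) and by $C_p(E)$ (using a test function for the Sobolev capacity, cut off so as to be admissible for $\cp_{p,G_s}$). Since $\cp_{p,G_s}$ is an outer capacity and, by \eqref{eq-extend-cap-Gt}, is determined by its values on compact subsets, and the same holds for $C_p$ by \eqref{eq-Cp-sup}, it suffices to treat a compact set $K\subset \clG_t$ and produce, for each of the two bounds, an admissible function $v\in C_0^\infty(\clG\setm Q_s)$ with $v=1$ on $K$ and $\int_{G_s}|\grad v|^p\,dx$ controlled by $1$ and by $C_p(K)+\eps$ respectively; then take the minimum.

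First I would fix a smooth cut-off $\eta\in C_0^\infty(\R^n)$ with $0\le\eta\le1$, $\eta\equiv1$ on $\clBprime\times[t,\infty)$ and $\eta\equiv0$ on $\clBprime\times(-\infty,s]$ (for instance depending only on $x_n$, interpolating linearly in $x_n$ over the interval $[s,t]$, then mollified), so that $|\grad\eta|\le C/(t-s)$ with $C$ depending only on $t-s$. For the constant bound, take $v=\eta$ itself: it equals $1$ on $K\subset\clG_t\subset\clBprime\times[t,\infty)$, it vanishes for $x_n\le s$ and can be further multiplied by a cut-off in $x_n$ near $+\infty$ so that it has compact support, hence $v\in C_0^\infty(\clG\setm Q_s)$; and $\int_{G_s}|\grad v|^p\,dx\le (C/(t-s))^p\,|G_s\cap\{s<x_n<t\}|$, which is a finite constant depending only on $t-s$ (here one uses that $B'$ has finite $(n-1)$-measure, so the relevant slab has finite volume). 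This gives $\cp_{p,G_s}(K)\le C_{t-s}$, hence $\cp_{p,G_s}(E)\le C_{t-s}$ by monotonicity and capacitability, proving the $\simle 1$ half.

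For the $C_p$ half, let $\eps>0$ and, using \eqref{eq-Cp} (and the fact that the infimum in $C_p$ may be taken over continuous admissible functions), choose $w\in C_0^\infty(\R^n)$ with $w\ge1$ on $K$, $0\le w\le1$, and $\int_{\R^n}(|w|^p+|\grad w|^p)\,dx\le C_p(K)+\eps$. Set $v=\eta w$. Then $v\in C_0^\infty(\R^n\setm Q_s)$ restricted to $\clG$, so $v\in C_0^\infty(\clG\setm Q_s)$, and $v=1$ on $K$ since $\eta=w=1$ there. By the product rule,
\[
\int_{G_s}|\grad v|^p\,dx \le 2^{p-1}\int_{G_s}\bigl(|\grad w|^p + |w|^p|\grad\eta|^p\bigr)\,dx
\le 2^{p-1}\Bigl(1 + (C/(t-s))^p\Bigr)\int_{\R^n}\bigl(|\grad w|^p+|w|^p\bigr)\,dx,
\]
which is $\le C_{t-s}\bigl(C_p(K)+\eps\bigr)$. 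Letting $\eps\to0$ and then taking the supremum over compact $K\subset E$ via \eqref{eq-extend-cap-Gt} and \eqref{eq-Cp-sup} gives $\cp_{p,G_s}(E)\simle C_p(E)$ with constant depending on $t-s$. Combining the two estimates yields $\cp_{p,G_s}(E)\simle\min\{1,C_p(E)\}$.

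The only mildly delicate point is the bookkeeping for admissibility: one must ensure the test functions genuinely lie in $C_0^\infty(\clG\setm Q_s)$, i.e.\ vanish on $\clBprime\times[0,s]$ (in particular on the base) and have compact support, while being free to be nonzero on the lateral boundary $\bdy B'\times(s,\infty)$ — which is exactly what $v=\eta w$ (and $v=\eta$ truncated near infinity) does, since $\eta$ kills everything with $x_n\le s$ and $w$, resp.\ the extra $x_n$-cutoff, provides compact support. No genuine obstacle arises; the argument is a routine two-sided cut-off comparison.
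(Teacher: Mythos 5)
Your proof is correct and follows essentially the same route as the paper: reduce to compact $K\subset\clG_t$ via capacitability, introduce a cut-off $\eta$ vanishing for $x_n\le s$ and equal to $1$ for $x_n\ge t$ with $|\grad\eta|\lesssim 1/(t-s)$, and test $\cp_{p,G_s}(K)$ with $\eta$ (suitably truncated near infinity) for the bound $\simle 1$ and with $\eta w$ for the bound $\simle C_p(K)$. The one slip is purely notational: $\eta$ as you describe it ($\equiv1$ on the unbounded set $\clBprime\times[t,\infty)$) cannot lie in $C_0^\infty(\R^n)$; it should be $C^\infty$, and only after multiplying by the extra $x_n$-cut-off (or by $w$) do you get a compactly supported admissible function, exactly as you say in the next sentence.
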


\begin{proof}
Using~\eqref{eq-Cp-sup} and \eqref{eq-extend-cap-Gt},
we may without loss of generality assume that $E$ is a compact subset of $\clG_t$.
Let $v\in C_0^\infty(\R^n)$ be such that $v\ge1$ on $E$.

Choose a cut-off function $\eta\in C^\infty(\clG)$,  such that 
$0\leq\eta\leq 1$ on $\clG$, $\eta=1$ on $G_t$, 
$\eta=0$ on $G\setm G_{s}$ and $|\grad \eta|\le 2/(t-s)$. 
Then both $\eta v$ and $\eta$ are admissible for $\cp_{p,G_{s}}(E)$ and so 
\begin{align*}
\cp_{p,G_{s}}(E)&\le\int_{G_{s}}|\grad(\eta v)|^p\,dx
\simle\int_{G_{s}}(|v\grad\eta |^p+|\eta\grad v|^p)\,dx\\
&\simle\int_{\R^n}(|v|^p+|\grad v|^p)\,dx.
\end{align*}
Since also
\[
\cp_{p,G_{s}}(E) \le \int_{G_{s}}|\grad\eta|^p\,dx \simle 1,
\]
the statement follows by taking the infimum over all $v\in C_0^\infty(\R^n)$ 
admissible in the definition of $C_p(E)$.

\end{proof}

\begin{lem} \label{lem-Cp-cap}
Let $E\subset \clG_t\setm G_{t+1}$ with $t\ge1$. 
Then $C_p(E)\simle\cp_{p,G_{t-1}}(E)$.
\end{lem}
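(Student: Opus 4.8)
The plan is to reduce to compact sets, take a near-optimal admissible function $v$ for $\cp_{p,G_{t-1}}$ on the cylinder, cut it off in the $x_n$-direction (using a one-dimensional Poincar\'e inequality to absorb the lower-order term), and then reflect it across the lateral boundary $\bdy B'\times\R$ of the cylinder to manufacture an admissible function for the Sobolev capacity $C_p$ on $\R^n$. The source of the competitor has to be $v$ restricted to $B'\times\R$, since $\cp_{p,G_{t-1}}$ carries no information about $v$ outside the cylinder.

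By \eqref{eq-Cp-sup} and the monotonicity of $\cp_{p,G_{t-1}}$ it suffices to prove $C_p(K)\simle\cp_{p,G_{t-1}}(K)$ for every compact $K$; by \eqref{eq-Gt} such a $K$, when $K\subset E$, lies in the slab $\clBprime\times[t,t+1]=\clG_t\setm G_{t+1}$. Fix $\eps>0$ and, using \eqref{def-Gscap} and the truncation remark following it, pick $v\in\Llp_{\ka,0}(\clG\setm Q_{t-1})$ with $0\le v\le1$, $v=1$ on $G_{t-1}\cap U$ for some open $U\supset K$, and $\int_{G_{t-1}}|\grad v|^p\,dx\le\cp_{p,G_{t-1}}(K)+\eps$; the trace of $v$ on $\clBprime\times\{t-1\}$ vanishes, as is built into $\Llp_{\ka,0}(\clG\setm Q_{t-1})$. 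Choose $\psi\in C_0^\infty(\R)$ with $0\le\psi\le1$, $\psi=1$ on $[t-\tfrac12,t+\tfrac32]$, $\spt\psi\subset(t-1,t+2)$ and $|\psi'|$ bounded by an absolute constant, and set $w(x)=\psi(x_n)v(x)$. Then $w\in\Wp(B'\times\R)$ has bounded support, $w=1$ on the set $\mathcal N=U\cap(\clBprime\times(t-\tfrac12,t+\tfrac32))$, which is a relatively open neighbourhood of $K$ in $\clBprime\times\R$, and, since $\grad w=\psi\grad v+v\psi' e_n$ and the one-dimensional Poincar\'e inequality in $x_n$ on $(t-1,t+2)$ applies (because $v(\cdot,t-1)=0$),
\[
\|w\|_{\Wp(B'\times\R)}^p\simle\int_{\clBprime\times(t-1,t+2)}(|\grad v|^p+|v|^p)\,dx\simle\int_{G_{t-1}}|\grad v|^p\,dx\le\cp_{p,G_{t-1}}(K)+\eps,
\]
with comparison constants depending only on $n$ and $p$ (the slab has fixed width and the gap between $G_{t-1}$ and $\clG_t$ has width $1$, uniformly in $t$).

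Now let $\iota(x')=x'/|x'|^2$ be the inversion in the unit sphere of $\R^{n-1}$: it fixes $\{|x'|=1\}$ pointwise, swaps $B'\setm\{0\}$ with $\{|x'|>1\}$, and is bi-Lipschitz, with absolute constants, from $\{1\le|x'|\le3\}$ onto $\{\tfrac13\le|x'|\le1\}$. Define $\tilde w(x',x_n)$ to be $w(x',x_n)$ for $|x'|\le1$ and $w(\iota(x'),x_n)$ for $|x'|>1$; since the two pieces have the same trace on $\{|x'|=1\}\times\R$, the gluing lemma for Sobolev functions gives $\tilde w\in\Wp(\{|x'|<3\}\times\R)$ with $\|\tilde w\|_{\Wp(\{|x'|<3\}\times\R)}\simle\|w\|_{\Wp(B'\times\R)}$. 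Fix $\chi\in C_0^\infty(\R^{n-1})$ with $0\le\chi\le1$, $\chi=1$ on $\{|x'|\le2\}$ and $\spt\chi\subset\{|x'|<3\}$, and put $h=\chi\tilde w$. Then $h\in\Wp(\R^n)$, $0\le h\le1$, $h$ has bounded support and $\|h\|_{\Wp(\R^n)}^p\simle\cp_{p,G_{t-1}}(K)+\eps$. Moreover $h=1$ a.e.\ on an open neighbourhood of $K$ in $\R^n$: near a point $x_0\in K$ the points with $|x'|\le1$ lie in $\mathcal N$, so there $h=w=1$, while if $|x_0'|=1$ then for $x$ near $x_0$ with $|x'|>1$ one has $(\iota(x'),x_n)\to(\iota(x_0'),x_{0,n})=x_0\in\mathcal N$, hence $(\iota(x'),x_n)\in\mathcal N$ and $h(x)=\chi(x')w(\iota(x'),x_n)=1$. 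A routine mollification of $h$ then produces competitors in \eqref{eq-Cp}, so $C_p(K)\le\|h\|_{\Wp(\R^n)}^p\simle\cp_{p,G_{t-1}}(K)+\eps\le\cp_{p,G_{t-1}}(E)+\eps$. Letting $\eps\to0$ and taking the supremum over compact $K\subset E$ via \eqref{eq-Cp-sup} finishes the proof.

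I expect the reflection step to be the main obstacle: one must extend $v$ across the \emph{curved} lateral boundary of the cylinder with the $\Wp(\R^n)$-norm controlled only by $\int_{G_{t-1}}|\grad v|^p$, and one must verify that the reflected, truncated function is still $\ge1$ near $E$ even at points of $E$ lying on $\bdy B'\times[t,t+1]$ — which is exactly where the inversion's property of fixing the unit sphere is used. The $x_n$-localisation and the Poincar\'e estimate are routine.
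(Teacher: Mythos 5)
Your argument is correct and follows the paper's own approach: reduce to compact sets, extend the competitor across the lateral boundary of the cylinder by the inversion $x'\mapsto x'/|x'|^2$ in the unit sphere of $\R^{n-1}$, localize in $x_n$ with a cut-off, and then absorb the zeroth-order term into $\int|\grad v|^p$ via a Poincar\'e inequality exploiting the vanishing of $v$ at $x_n=t-1$. The only genuine difference is cosmetic: you carry out the Poincar\'e step with an elementary one-dimensional inequality in the $x_n$-variable on the slab $(t-1,t+2)$, whereas the paper transfers $v$ to the ball $B_\rho$ via $T$ and invokes the weighted Poincar\'e inequality \cite[(1.5)]{HKM} there, so your route is slightly more self-contained but leads to the same estimate with the same uniformity in $t$.
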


\begin{proof}
Without loss of generality, we may assume that $E$ is a compact subset 
of $\clG_t\setm G_{t+1}$.
Let $v\in C_0^\infty(\clG\setm Q_{t-1})$ with $v\ge1$ on $E$.
Extend $v$ to a function $\vb$ on the larger cylinder 
\[
G'=B'(0,2)\times(0,\infty), \quad \text{where }
B'(0,2)=\{x'\in\R^{n-1}:|x'|<2\},
\]
so that $\vb=v$ on $G$ and 
\[
\|\vb\|_{\Wp(G')}\simle \|v\|_{\Wp(G)},
\]
where the comparison constant in $\simle$ depends on $p$ and $n$.
This is possible since $B'\subset\R^{n-1}$ is an extension domain, 
and can be achieved e.g.\ by the spherical inversion
\[
\vb(x',x_n)=v\biggl(\frac{x'}{|x'|^2},x_n\biggr) \quad\text{when }1\le|x'|\le2.
\]
Multiply $\vb$ by a cut-off function $\eta\in C_0^\infty(B'(0,2)\times(t-1,t+2))$ 
such that $\eta=1$ on $G_t\setm G_{t+1}$, $0\le\eta\le1$ and $|\grad\eta|\le2$.
Then $\eta\vb$ is admissible for $C_p(E)$ and so 
\begin{align} \label{eq-Cp-int}
C_p(E)&\le \int_{\R^n}(|\eta\vb|^p+|\grad(\eta\vb)|^p)\,dx \nonumber\\
&\simle \int_{G'}(|\vb|^p+|\grad\vb|^p)\,dx
\simle \int_{G_{t-1}}(|v|^p+|\grad v|^p)\,dx.
\end{align}
Now, define $\vt$ as in \eqref{eq-fk-Lip} with $f_k$ replaced by $v$. 
Then $\vt\in\Hp_0(B_\rho,\wt)$ with $\rho=e^{-\ka(t-1)}$.
Using Lemma~\ref{lem-int-G-T} and the weighted Poincar\'e inequality
\cite[(1.5)]{HKM}, we get that
\begin{align*}
\int_{G_{t-1}}|v|^p\,dx &\simle e^{p\ka t}\int_{B_{\rho}}|\vt|^p\wt(\xi)\,d\xi\\
&\simle e^{p\ka t}\rho^p\int_{B_{\rho}}|\grad\vt|^p\wt(\xi)\,d\xi\simeq\int_{G_{t-1}}|\grad v|^p\,dx.
\end{align*}
Substituting the last integral into \eqref{eq-Cp-int} gives
\[
C_p(E)\simle \int_{G_{t-1}}|\grad v|^p\,dx. 
\]
Taking the infimum over of all $v\in C_0^\infty(\clG\setm Q_{t-1})$ admissible 
in the definition of $\cp_{p,G_{t-1}}(E)$ completes the proof. 
\end{proof}

\section{Boundary regularity at $\infty$}
\label{sect-bdry-reg}

The solution $u$ of the mixed boundary value problem~\eqref{eq-weak-sol-p-Lapl},
obtained in Section~\ref{sec-existence}, is 
continuous in $G\setm F$ and at the Neumann boundary~$\bdy G\setm F$. 
If $f\in C(F_0)$, then $u$ is also continuous at the Dirichlet boundary~$F_0$,
except for a set of zero $\Cp$-capacity.
We shall now study its continuity at~$\infty$.

Recall that $F$ is a closed subset of $\clG$, containing the base
$B'\times\{0\}$, and that $F_0= F\cap\bdy(G\setm F)$ is the Dirichlet
boundary of $G\setm F$.

If $F$ is bounded, then the solution $\ut\in \Hp\loc(D,\wt)$ of \eqref{eqnB} in
$D$, constructed in the proofs of
Theorems~\ref{thm-ex-sol-Lip} and~\ref{thm-sol-Lip}, 
belongs to $\Hp(D,\wt)$ (when $f\in\Llp_\ka(G\setm F)$)
or is bounded (when $f\in C(F_0)$).
Since the origin $0$ has zero $(p,\wt)$-capacity by
Lemma~\ref{lem-cap-B-2B},
the removability results \cite[Lemma~7.33 and Theorem~7.36]{HKM} 
imply that $\ut$ is a solution of \eqref{eqnB} in $\Om=D\cup\{0\}$.
Consequently, $\ut$ is continuous at $\xi=0$ and it follows that the limit
$\lim_{\clG\setm F\ni x\to\infty}u(x)$ always exists when $F$ is bounded.

\begin{defi} \label{defi-infty-reg}
Assume that $F$ is unbounded.
We say that the point at $\infty$ is \emph{regular} for the mixed boundary 
value problem~\eqref{eq-weak-sol-p-Lapl} in $G\setm F$
with zero Neumann data on $\bdy G\setm F$ 
if for all Dirichlet boundary data $f\in C(F_0)$ with a finite limit
\begin{equation}  \label{eq-f(infty)-ex}
\lim_{\substack{x_n\to\infty\\ x\in F_0}} f(x) =:f(\infty), 
\end{equation}
the continuous solution $u$, provided by 
Theorem~\ref{thm-cont}, satisfies
\begin{equation}   \label{eq-def-reg-infty}
\lim_{\substack{x_n\to\infty\\ x\in G\setm F}} u(x) =f(\infty).
\end{equation}
\end{defi}

As before, we will study the mixed boundary value 
problem~\eqref{eq-weak-sol-p-Lapl} on  $G\setm F$
 by means of the weighted equation $\dvg\B(\xi,\grad\ut(\xi))=0$
in the bounded domain $D=B_1\setm (T(F)\cup PT(F)\cup\{0\})$.

\begin{lem}		\label{lem-reg-infty-0}
The point at $\infty$ is regular for the mixed boundary 
value problem~\eqref{eq-weak-sol-p-Lapl} in $G\setm F$ if and only if 
the origin $0\in\bdry D$ is regular with respect to the equation
\begin{equation}   \label{eq-dvg-B-in-D}
\dvg\B(\xi,\grad\ut(\xi))=0 \quad \text{in } D,
\end{equation}
where $\A$ is as in \eqref{eq-def-B}.
\end{lem}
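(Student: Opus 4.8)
Since $|T(x)|=e^{-\ka x_n}$, a point $x\in\clG$ with $x_n\to\infty$ corresponds under $T$ to $\xi=T(x)\to 0$, so the limit in \eqref{eq-def-reg-infty} and the limit in the definition of regularity of $0$ for \eqref{eq-dvg-B-in-D} are the same limit read on the two sides of the diffeomorphism; in particular, if $\infty\notin\bdy(G\setm F)$ (equivalently $0\notin\bdy D$) both statements are vacuous, so I assume $0\in\bdy D$. Recall from Sections~\ref{sect-rem-Neu-data} and~\ref{sec-existence} that the continuous solution $u$ of the mixed problem with data $f\in C(F_0)$ is $u=\ut\circ T$, where $\ut$ is the uniform limit of the solutions $\ut_k\in\Hp(D,\wt)$ of \eqref{eqnB} with $\ut_k-\ft_k\in\Hp_0(D,\wt)$ and $\ft_k$ the reflected compactly supported Lipschitz functions of~\eqref{eq-fk-Lip}; each $\ft_k$ is bounded and Lipschitz on the bounded set $D$ and vanishes near $0$, so $\ft_k\in\Hp(D,\wt)\cap C(\clD)$ with $\ft_k(0)=0$. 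Recall also from Corollary~\ref{cor-uop} that a solution of \eqref{eqnB} whose continuous boundary data are $P$-invariant is itself $P$-invariant.

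Assume first that $0$ is regular for \eqref{eq-dvg-B-in-D}, and let $f\in C(F_0)$ have a finite limit $f(\infty)$, normalised to $0$. Applying the definition of regularity with boundary data $\ft_k$ gives $\lim_{\xi\to 0}\ut_k(\xi)=\ft_k(0)=0$ for each $k$. Since $\ut_k\to\ut$ uniformly in $D$, a two-$\eps$ argument yields $\lim_{\xi\to 0}\ut(\xi)=0$, and reading this back through $T$ gives $\lim_{x_n\to\infty}u(x)=0=f(\infty)$, which is \eqref{eq-def-reg-infty}. Hence $\infty$ is regular.

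Conversely, assume $\infty$ is regular and let $\fh\in\Hp(D,\wt)\cap C(\clD)$ be arbitrary, with $\uh$ the solution of \eqref{eqnB} satisfying $\uh-\fh\in\Hp_0(D,\wt)$; we may assume $\fh(0)=0$. Put $g^{+}=\max\{\fh,\fh\circ P\}$ and $g^{-}=\min\{\fh,\fh\circ P\}$. Using $\wt\circ P=\wt$ one checks that $\fh\circ P\in\Hp(D,\wt)\cap C(\clD)$, hence so are $g^{\pm}$; moreover $g^{\pm}$ are $P$-invariant, $g^{-}\le\fh\le g^{+}$ on $\clD$, and $g^{\pm}(0)=\fh(0)=0$. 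By the comparison principle \cite[Lemma~3.18]{HKM} (applicable since $\min\{g^{+}-\fh,0\}=0=\min\{\fh-g^{-},0\}\in\Hp_0(D,\wt)$), the corresponding solutions satisfy $\uh_{g^{-}}\le\uh\le\uh_{g^{+}}$ in $D$. Since $g^{\pm}$ are $P$-invariant, so are $\uh_{g^{\pm}}$ by Corollary~\ref{cor-uop}, and hence $\uh_{g^{\pm}}\circ T$ are continuous solutions of the mixed problem in $G\setm F$; identifying them with the solutions provided by Theorem~\ref{thm-cont} for the continuous data $f^{\pm}=g^{\pm}\circ T|_{F_0}$ (which have $f^{\pm}(\infty)=g^{\pm}(0)=0$), the regularity of $\infty$ forces $\lim_{\xi\to 0}\uh_{g^{\pm}}(\xi)=0$ along $\Dplus$, and by the $P$-invariance and continuity of $\uh_{g^{\pm}}$ in $D$ also along all of $D$. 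The squeeze $\uh_{g^{-}}\le\uh\le\uh_{g^{+}}$ then gives $\lim_{\xi\to 0}\uh(\xi)=0=\fh(0)$, so $0$ is regular.

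The technical heart is the identification used in the last paragraph: since $g^{\pm}\in\Hp(D,\wt)$ are $P$-invariant, the pulled-back data $f^{\pm}$ in fact extend to $\Llp_\ka(G\setm F)$ (Propositions~\ref{prop-Llp-H} and~\ref{prop-L0-to-H0}), so $\uh_{g^{\pm}}\circ T$ is the unique continuous solution of Theorem~\ref{thm-ex-sol-Lip} for that Sobolev datum; to see that this coincides with the (uniform-limit) solution of Theorem~\ref{thm-cont} for $f^{\pm}$ one sandwiches, via the comparison principle in Theorem~\ref{thm-ex-sol-Lip}, between the Lipschitz approximants $f^{\pm}_k$ and the functions $f^{\pm}\pm 2^{1-k}$, using that the relevant cut-off functions vanish near $F_0$ and have bounded support (the normalisation $\fh(0)=0$ is convenient here). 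One genuinely needs to pass to the symmetric data $g^{\pm}$ rather than simply reflect $\fh$, because the equation is nonlinear and one cannot average a solution with its reflection — the point is precisely that $g^{\pm}$ are $P$-invariant while still taking the value $\fh(0)$ at $0$. An alternative route for this implication is to invoke that regularity of $0$ for \eqref{eq-dvg-B-in-D} depends only on $(D,\wt)$ near $0$, for instance through the existence of a barrier or the Wiener criterion of \cite{HKM} and~\cite{M}, so that it is already detected by the single $P$-invariant datum $\fh(\xi)=|\xi|$ (pull-back $f(x)=e^{-\ka x_n}$), for which $\infty$-regularity yields the required limit directly.
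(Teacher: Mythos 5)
Your argument is correct and mirrors the paper's proof: the forward direction reduces regularity at $\infty$ to regularity of $0$ via the Lipschitz approximants and the uniform limit (the paper instead uses a single $\eps$-approximant together with Corollary~\ref{cor-com-princ}, but that is the same idea), and the converse uses exactly the symmetrization $\min\{\fh,\fh\circ P\}\le\fh\le\max\{\fh,\fh\circ P\}$ plus the comparison principle and $P$-invariance from Corollary~\ref{cor-uop}. The one place where you add something the paper leaves tacit is the identification, in the converse step, of the $\Llp_\ka$-solution of Theorem~\ref{thm-ex-sol-Lip} for $f^{\pm}=g^{\pm}\circ T$ with the uniform-limit solution of Theorem~\ref{thm-cont}; the paper applies Definition~\ref{defi-infty-reg} to $u_j=\uh_j\circ T$ without comment, so your sandwiching remark is a worthwhile clarification rather than a departure.
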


Recall that regularity with respect to \eqref{eq-dvg-B-in-D} is defined in 
Heinonen--Kilpel\"ainen--Martio~\cite[p.~122]{HKM}
using~\eqref{eq-def-reg-HKM} for all $\fh\in\Hp(D,\wt)\cap C(\clD)$.

\begin{proof}
First, assume that $0$ is regular with respect to the 
equation~\eqref{eq-dvg-B-in-D} and let $f\in C(F_0)$ be such 
that the limit in~\eqref{eq-f(infty)-ex} is finite.
Also, assume without loss of generality that $f(\infty)=0$.

Given $\eps>0$, let $f_0$ be a compactly supported Lipschitz function on $\clG$ 
such that $|f_0-f|\le\eps/2$ on $F_0$.
Define $\ft_0$ as in \eqref{eq-fk-Lip}.
Then $\ft_0\in\Hp(D,\wt)\cap C(\clD)$ 
vanishes in a neighbourhood of the origin $\xi=0$.
By \cite[Theorems~3.17 and 3.70]{HKM}, there is a unique continuous 
solution $\vt\in\Hp(D,\wt)$ of \eqref{eq-dvg-B-in-D} in $D$ 
such that $\vt-\ft_0\in\Hp_0(D,\wt)$.

Since $0\in \bdy D$ is regular, there is $\de>0$ such that $|\vt(\xi)|<\eps/2$ 
whenever $\xi\in D$ and $|\xi|\le \de$.
Let $u$ be the bounded continuous solution of \eqref{eq-weak-sol-p-Lapl} 
with Dirichlet boundary data $f$, constructed in
Theorem~\ref{thm-sol-Lip}. 
Since $|f_0-f|<\eps/2$ on $F_0$, Corollary~\ref{cor-com-princ}
implies that
\[
\vt\circ T-\frac{\eps}{2}\le u(x)   \le \vt\circ T+\frac{\eps}{2}
\quad  \text{in } G\setm F,
\]
so that for $x\in G\setm F$ with $x_n>\ka^{-1}\log(1/\de)$ we have $|u(x)|<\eps$. 
Since $\eps>0$ was chosen arbitrarily, we conclude that
\eqref{eq-def-reg-infty} holds.

Conversely, assume that $\infty$ is regular for the mixed boundary value problem 
\eqref{eq-weak-sol-p-Lapl} and let $\fh\in\Hp(D,\wt)\cap C(\clD)$ be 
arbitrary.  
We shall show that the solution $\uh$ of~\eqref{eq-dvg-B-in-D}
in $D$ with boundary data $\fh$ satisfies $\lim_{\xi\to0} \uh(\xi)=\fh(0)$.

The function $\fh$ need not necessarily be symmetric (by which we mean 
$\fh=\fh\circ P$), so we instead consider $\fh_1=\min\{\fh,\fh\circ P\}$ 
and $\fh_2=\max\{\fh,\fh\circ P\}$ which are symmetric in the above sense.
By Theorem~1.20 in Heinonen--Kilpel\"ainen--Martio~\cite{HKM}, 
$\fh_1,\fh_2\in\Hp(D,\wt)\cap C(\clD)$.
By Theorems~3.17 and~3.70 in \cite{HKM}, there exist bounded continuous 
weak solutions $\uh_1,\uh_2\in\Hp(D,\wt)$  of \eqref{eq-dvg-B-in-D} 
in $D$ with boundary data $\fh_1,\fh_2$ respectively.
As in Section~\ref{sec-existence}, the functions $u_1=\uh_1\circ T$ 
and $u_2=\uh_2\circ T$  are solutions of
\eqref{eq-weak-sol-p-Lapl} in $G\setm F$ with zero Neumann boundary data 
and Dirichlet boundary data $f_1=\fh_1\circ T$ and $f_2=\fh_2\circ T$ 
in $\Llp_k(G\setm F)$.
Note that $f_1$ and $f_2$ are continuous on $F_0$ and that 
\[
\lim_{\substack{x_n\to\infty\\ x\in F_0}}f_j(x)=\fh(0),\quad j=1,2.
\]
Since $\infty$ is regular for \eqref{eq-weak-sol-p-Lapl} in $G\setm F$, 
the solutions $u_j$ satisfy
\[
\lim_{\substack{x_n\to\infty\\ x\in G\setm F}}u_j(x)=\fh(0), \quad j=1,2.
\]
Finally, since $\uh_j=\uh_j\circ P$ by Corollary~\ref{cor-uop}, it follows that
\[
\lim_{\xi\to0}\uh_j(\xi) =\lim_{T(G\setm F)\ni\xi\to0}\uh_j(\xi)
=\fh(0), \quad j=1,2.
\]  
Note that $\fh_1\le\fh\le\fh_2$ and hence $\uh_1\le\uh\le\uh_2$, from which we conclude that 
\[
\lim_{\xi\to0}\uh(\xi)=\fh(0). \qedhere
\] 
\end{proof}

Regular boundary points for \eqref{eq-dvg-B-in-D} are characterized by the following
\emph{Wiener criterion}, 
see Heinonen--Kilpel\"ainen--Martio~\cite[Theorem~21.30\,(i)$\eqv$(v)]{HKM}  
and Mikkonen~\cite{M}.
Because of Lemma~\ref{lem-cap-B-2B}, the Wiener criterion 
\begin{equation}			\label{eq-Wiener-crit}
\int_0^1\biggl(\frac{\cp_{p,\wt}(\Ft\cap B_r,B_{2r})}{\cp_{p,\wt}(B_r,B_{2r})}
    \biggr)^{1/(p-1)}\,\frac{dr}{r}=\infty
\end{equation}
 at $0\in\bdry D$ reduces to 
\begin{equation}   \label{eq-Wiener-simpler}
\int_0^1 \cp_{p,\wt}(\Ft\cap B_r,B_{2r})^{1/(p-1)} \,\frac{dr}{r} = \infty.
\end{equation}
Since the quotient in \eqref{eq-Wiener-crit} is at most $1$ and
$\int_\tau^1 r^{-1}dr =\log(1/\tau)$ is finite, we also see that the integral $\int_0^1$ 
in the Wiener criterion can equivalently be replaced by $\int_0^\tau$ for any $\tau>0$.
Moreover,
\[
\cp_{p,\wt}(\Ft\cap B_r,B_{2r})\ge\cp_{p,\wt}(\Ft\cap\clB_{r/2},B_{2r})
\simge\cp_{p,\wt}(\Ft\cap\clB_{r/2},B_r),
\]
where the last inequality follows as in the proof of \cite[Lemma~2.16]{HKM}.
Inserting this into \eqref{eq-Wiener-simpler}
shows that \eqref{eq-Wiener-simpler} is equivalent to 
\begin{equation}   \label{eq-Wiener-clB}
\int_0^1 \cp_{p,\wt}(\Ft\cap\clB_\rho,B_{2\rho})^{1/(p-1)} \,\frac{d\rho}{\rho} = \infty.
\end{equation}
We will now further rewrite this condition to better match the transformation $T^{-1}$
back to the cylinder $G$.

\begin{lem} \label{lem-Wiener-al}
For any $\al> 2^{1/(p-1)}$,
condition~\eqref{eq-Wiener-simpler} is equivalent to 
\begin{equation}   \label{eq-Wiener-al}
\int_0^1 \cp_{p,\wt}(\Ft\cap(B_r\setm B_{r^\al}),B_{2r})^{1/(p-1)} \,\frac{dr}{r} 
       = \infty.
\end{equation}
\end{lem}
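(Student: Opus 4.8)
The plan is to treat the two implications separately. The implication \eqref{eq-Wiener-al}$\Rightarrow$\eqref{eq-Wiener-simpler} is immediate: since $\Ft\cap(B_r\setm B_{r^\al})\subset\Ft\cap B_r$, monotonicity of the capacity gives $\cp_{p,\wt}(\Ft\cap(B_r\setm B_{r^\al}),B_{2r})\le\cp_{p,\wt}(\Ft\cap B_r,B_{2r})$, so the integrand in \eqref{eq-Wiener-al} is pointwise dominated by that in \eqref{eq-Wiener-simpler}. For the converse I would decompose the ball $B_r$ into a geometric sequence of annuli and then reduce each piece to \eqref{eq-Wiener-al} by a change of variables in the radial integral.

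In detail, for $0<r<1$ we have $r>r^\al>r^{\al^2}>\dots\to0$, so $B_r\setm\{0\}$ is the disjoint union of the annuli $A_k(r):=B_{r^{\al^k}}\setm B_{r^{\al^{k+1}}}$, $k\ge0$, with $A_0(r)=B_r\setm B_{r^\al}$ being exactly the set occurring in \eqref{eq-Wiener-al}. Since the origin has zero $(p,\wt)$-capacity by Lemma~\ref{lem-cap-B-2B} and $\cp_{p,\wt}(\cdot,B_{2r})$ is countably subadditive \cite[Theorem~2.2]{HKM}, while $B_{2r^{\al^k}}\subset B_{2r}$ allows us to shrink the ambient ball, we obtain
\[
\cp_{p,\wt}(\Ft\cap B_r,B_{2r})\le\sum_{k=0}^\infty\cp_{p,\wt}(\Ft\cap A_k(r),B_{2r})\le\sum_{k=0}^\infty\cp_{p,\wt}\bigl(\Ft\cap A_k(r),B_{2r^{\al^k}}\bigr).
\]
I would then raise this to the power $1/(p-1)$, integrate $dr/r$ over $(0,1)$, interchange the sum and integral by monotone convergence, and in the $k$-th term substitute $\rho=r^{\al^k}$, so that $dr/r=\al^{-k}\,d\rho/\rho$ and $A_k(r)$ turns into $B_\rho\setm B_{\rho^\al}$ with ambient ball $B_{2\rho}$. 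Each term then equals $\al^{-k}$ times the integral in \eqref{eq-Wiener-al}; summing the geometric factors bounds the left-hand side of \eqref{eq-Wiener-simpler} by a finite constant (depending on $p$ and $\al$) times the integral in \eqref{eq-Wiener-al}, which together with the first implication yields the claimed equivalence.

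The delicate step is passing the infinite sum through the exponent $1/(p-1)$. When $p\ge2$ this exponent is at most $1$, so $\bigl(\sum_k a_k\bigr)^{1/(p-1)}\le\sum_k a_k^{1/(p-1)}$ and every $\al>1$ would suffice. When $1<p<2$ the exponent exceeds $1$ and this fails; instead I would apply Jensen's inequality for $t\mapsto t^{1/(p-1)}$ relative to a geometric probability measure $\{(1-\be)\be^k\}_{k\ge0}$, giving $\bigl(\sum_k a_k\bigr)^{1/(p-1)}\le(1-\be)^{1-1/(p-1)}\sum_k\be^{-k(1/(p-1)-1)}a_k^{1/(p-1)}$. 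The extra weights $\be^{-k(1/(p-1)-1)}$ have to be absorbed by the factors $\al^{-k}$ coming from the substitution, i.e.\ one needs $\be\in(0,1)$ close enough to $1$ that $\be^{-(1/(p-1)-1)}<\al$; this is exactly where a quantitative lower bound on $\al$ (in particular the hypothesis $\al>2^{1/(p-1)}$) enters, making the resulting geometric series $\sum_k\bigl(\be^{-(1/(p-1)-1)}/\al\bigr)^k$ convergent. The remaining points are routine: the measurability of $r\mapsto\cp_{p,\wt}(\Ft\cap B_r,B_{2r})$ (already implicit in \eqref{eq-Wiener-simpler}) and the observation that the only overlap in the annular decomposition is the origin, which is capacity-negligible.
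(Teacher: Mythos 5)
Your argument is correct, but it takes a genuinely different route from the paper's. The paper performs a \emph{two-term} split $\Ft\cap B_r=(\Ft\cap(B_r\setminus B_{r^\al}))\cup(\Ft\cap B_{r^\al})$, bounds $(a+b)^{1/(p-1)}\le 2^{1/(p-1)}(a^{1/(p-1)}+b^{1/(p-1)})$, substitutes $\rho=r^\al$ in the second integral and then \emph{subtracts} the resulting copy of the original integral from both sides; the leftover boundary term is absorbed via the logarithmic condenser estimate \eqref{eq-cap-cond}. That subtraction is precisely what forces $\al>2^{1/(p-1)}$, so that the coefficient $1-2^{1/(p-1)}/\al$ is positive. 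You instead decompose $B_r\setminus\{0\}$ into the full infinite family of annuli $A_k(r)$, shrink each ambient ball, change variables separately in each term, and sum a geometric series, handling the exponent $1/(p-1)>1$ by a weighted Jensen inequality rather than by subtraction. This avoids the condenser estimate entirely and is cleaner; it is also \emph{more general}, because it in fact works for every $\al>1$: for $1<p<2$ you may always choose $\be\in(0,1)$ close enough to $1$ that $\be^{-(1/(p-1)-1)}<\al$, since the left side tends to $1$ as $\be\to1^-$. So your remark that $\al>2^{1/(p-1)}$ is ``exactly where'' a quantitative lower bound enters is not accurate for \emph{your} argument — you have quietly proven a slightly stronger statement — but that is a misattribution, not a gap. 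The only point worth making explicit in a write-up is that the sum-integral interchange is by Tonelli (nonnegative measurable integrands), and that each capacity involved is finite because $\cp_{p,\wt}(\Ft\cap B_r,B_{2r})\le \cp_{p,\wt}(B_r,B_{2r})\simeq1$ by Lemma~\ref{lem-cap-B-2B}, so countable subadditivity from \cite[Theorem~2.2]{HKM} applies without qualification.
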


\begin{proof}
One implication is clear since the integral in~\eqref{eq-Wiener-simpler} 
majorizes the one in~\eqref{eq-Wiener-al}.

Conversely, use the subadditivity
\begin{equation}   \label{eq-subadd-w-al}
\cp_{p,\wt}(\Ft\cap B_r,B_{2r}) 
   \le \cp_{p,\wt}(\Ft\cap(B_r\setm B_{r^\al}),B_{2r}) 
               + \cp_{p,\wt}(\Ft\cap B_{r^\al},B_{2r}) 
\end{equation}
to majorize the integral in~\eqref{eq-Wiener-simpler} by 
a sum of two integrals, 
one for each of the sets in the right-hand side of \eqref{eq-subadd-w-al} as follows.
For all $0<\de<1$, we have 
\begin{align}   \label{int-subadd-w-al}
&\int_\de^1\cp_{p,\wt}(\Ft\cap B_r,B_{2r})^{1/(p-1)}\,\frac{dr}{r} \nonumber\\
&\qquad \quad  \le 
2^{1/(p-1)}\int_\de^1\cp_{p,\wt}(\Ft\cap(B_r\setm B_{r^\al}),B_{2r})^{1/(p-1)}\,\frac{dr}{r} \\  
&\qquad \qquad 
+ 2^{1/(p-1)}\int_\de^1\cp_{p,\wt}(\Ft\cap B_{r^\al},B_{2r})^{1/(p-1)}\,\frac{dr}{r}.  \nonumber
\end{align}
The first integral on the right-hand side of \eqref{int-subadd-w-al} is for all 
$\de>0$ majorized by the integral in \eqref{eq-Wiener-al}. 
For the second integral, we use the change of variables $\rho=r^\al$,
together with the fact that
\[
\cp_{p,\wt}(\Ft\cap B_\rho,B_{2\rho^{1/\al}})\le\cp_{p,\wt}(\Ft\cap B_\rho,B_{2\rho}),
\quad 0<\rho<1,
\]
and estimate it as 
\begin{align}   \label{int-cpt-w-al}
\int_\de^1\cp_{p,\wt}(\Ft\cap B_{r^\al},B_{2r})^{1/(p-1)}\,\frac{dr}{r} 
   &\le \frac{1}{\al}\int_{\de^\al}^\de\cp_{p,\wt}(\Ft\cap B_\rho,B_{2\rho^{1/\al}})^{1/(p-1)}
           \,\frac{d\rho}{\rho} \nonumber\\
   & + \frac{1}{\al}\int_\de^1\cp_{p,\wt}(\Ft\cap B_\rho,B_{2\rho})^{1/(p-1)}
           \,\frac{d\rho}{\rho}.
\end{align}
For $\al> 2^{1/(p-1)}$, the last integral in \eqref{int-cpt-w-al} can be 
subtracted from the integral on the left-hand side of \eqref{int-subadd-w-al}. 
The remaining integral on the right-hand side in \eqref{int-cpt-w-al} is estimated using 
\eqref{eq-cap-cond}  as follows,
\begin{align*}
\int_{\de^\al}^\de \cp_{p,\wt}(\Ft\cap B_\rho,B_{2\rho^{1/\al}})^{1/(p-1)} 
           \,\frac{d\rho}{\rho}
&\le \int_{\de^\al}^\de \cp_{p,\wt}(B_\rho,B_{\rho^{1/\al}})^{1/(p-1)} 
           \,\frac{d\rho}{\rho}\\
&\simle \int_{\de^\al}^\de 
\biggl(\log\biggl(\frac{\rho^{1/\al}}{\rho}\biggr)\biggr)^{-1} \,\frac{d\rho}{\rho}
         =\frac{\al}{\al-1}\log\al.       
\end{align*}
Inserting this into \eqref{int-cpt-w-al} and \eqref{int-subadd-w-al}, 
together with letting $\de\to 0$, shows that 
\begin{align*}
&\biggl(1-\frac{2^{1/(p-1)}}{\al}\biggr)\int_0^1
         \cp_{p,\wt}(\Ft\cap B_r,B_{2r})^{1/(p-1)}\,\frac{dr}{r} \\
&\qquad \quad  \le 2^{1/(1-p)} \int_0^1
     \cp_{p,\wt}(\Ft\cap(B_r\setm B_{r^\al}),B_{2r})^{1/(p-1)}\,\frac{dr}{r}  
+ \frac{C\log\al}{\al-1},
\end{align*}
with $C$  independent of $\al$.
We therefore conclude that the integral in 
\eqref{eq-Wiener-simpler} is finite whenever the one in \eqref{eq-Wiener-al} 
is finite. 
Thus \eqref{eq-Wiener-simpler} and \eqref{eq-Wiener-al} are equivalent.\qedhere
\end{proof}

\begin{lem} \label{lem-Bral-Br2}
The condition \eqref{eq-Wiener-al} is equivalent to
\begin{equation}		\label{eq-int-with-r2}
\int_0^1 \cp_{p,\wt}(\Ft\cap(\clB_r\setm B_{r^2}),B_{2r})^{1/(p-1)} \,\frac{dr}{r}=\infty.
\end{equation}
\end{lem}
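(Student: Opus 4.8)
The plan is to prove the two implications separately, relying throughout on the chain of equivalences already recorded just before Lemma~\ref{lem-Wiener-al}, namely \eqref{eq-Wiener-crit}$\iff$\eqref{eq-Wiener-simpler}$\iff$\eqref{eq-Wiener-clB}, together with Lemma~\ref{lem-Wiener-al}, which gives \eqref{eq-Wiener-simpler}$\iff$\eqref{eq-Wiener-al} for the fixed $\al>2^{1/(p-1)}$. So everything reduces to comparing \eqref{eq-int-with-r2} with these conditions.

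For the easy direction, \eqref{eq-int-with-r2}$\Rightarrow$\eqref{eq-Wiener-al}, I would simply use that $\clB_r\setm B_{r^2}\subset\clB_r\subset B_{2r}$, so that monotonicity of the variational $(p,\wt)$-capacity gives $\cp_{p,\wt}(\Ft\cap(\clB_r\setm B_{r^2}),B_{2r})\le\cp_{p,\wt}(\Ft\cap\clB_r,B_{2r})$ for every $0<r<1$. Hence divergence of the integral in \eqref{eq-int-with-r2} forces divergence of the integral in \eqref{eq-Wiener-clB}, which is equivalent to \eqref{eq-Wiener-al}.

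For the main direction, \eqref{eq-Wiener-al}$\Rightarrow$\eqref{eq-int-with-r2}, I would argue by contraposition. Fix a nonnegative integer $J$ with $2^{J+1}\ge\al$. For $0<r<1$ the dyadic radii $r,r^2,\dots,r^{2^{J+1}}$ yield the covering
\[
B_r\setm B_{r^\al}\subset\bigcup_{j=0}^{J}\bigl(\clB_{r^{2^j}}\setm B_{r^{2^{j+1}}}\bigr),
\]
since $r^{2^{J+1}}\le r^\al$. Applying subadditivity of $\cp_{p,\wt}(\cdot,B_{2r})$ (as in \eqref{eq-subadd-w-al}, iterated $J$ times) and then shrinking the ambient ball from $B_{2r}$ to $B_{2r^{2^j}}\subset B_{2r}$, which only \emph{increases} the variational capacity and is legitimate because each annulus $\clB_{r^{2^j}}\setm B_{r^{2^{j+1}}}$ still lies in $B_{2r^{2^j}}$, one obtains
\[
\cp_{p,\wt}(\Ft\cap(B_r\setm B_{r^\al}),B_{2r})\le\sum_{j=0}^{J}\cp_{p,\wt}\bigl(\Ft\cap(\clB_{r^{2^j}}\setm B_{r^{2^{j+1}}}),B_{2r^{2^j}}\bigr).
\]
Raising this to the power $1/(p-1)$, using an elementary bound $(\sum_{j=0}^J a_j)^{1/(p-1)}\le C(J,p)\sum_{j=0}^J a_j^{1/(p-1)}$, integrating $\int_0^1 dr/r$, and performing in the $j$-th summand the substitution $\rho=r^{2^j}$ (so that $dr/r=2^{-j}\,d\rho/\rho$ and $\rho$ again runs over $(0,1)$), I would arrive at
\[
\int_0^1\cp_{p,\wt}(\Ft\cap(B_r\setm B_{r^\al}),B_{2r})^{1/(p-1)}\,\frac{dr}{r}\le C(J,p)\sum_{j=0}^{J}2^{-j}\int_0^1\cp_{p,\wt}\bigl(\Ft\cap(\clB_\rho\setm B_{\rho^2}),B_{2\rho}\bigr)^{1/(p-1)}\,\frac{d\rho}{\rho}.
\]
Since $\sum_{j\ge0}2^{-j}=2$, the left-hand integral is at most $2C(J,p)$ times the integral in \eqref{eq-int-with-r2}; in particular, if the latter is finite so is the former, which (via Lemma~\ref{lem-Wiener-al}) is the contrapositive of the desired implication.

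The routine part is the bookkeeping: the subadditivity, the power inequality, and the change of variables. The one step that needs care is the capacity manipulation in the main direction: the dyadic covering must be arranged so that each outer ball can be contracted from $B_{2r}$ down to $B_{2r^{2^j}}$ \emph{without loss}, which hinges on getting the monotonicity direction right (shrinking the ambient open set increases the variational capacity), and then checking that after the substitution $\rho=r^{2^j}$ each term reproduces precisely the integrand of \eqref{eq-int-with-r2}. Note that this argument uses only $2^{J+1}\ge\al$ and imposes no relation between the exponent $2$ and $2^{1/(p-1)}$, so it works uniformly for all $1<p<\infty$; for $p>2$ one could alternatively take $\al=2$ directly in Lemma~\ref{lem-Wiener-al}, but that shortcut is unavailable when $1<p\le2$, which is why the dyadic covering is the robust route.
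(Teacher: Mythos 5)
Your proposal is correct and follows essentially the same route as the paper: the easy direction is via the chain \eqref{eq-Wiener-simpler}$\iff$\eqref{eq-Wiener-clB} and monotonicity of $\cp_{p,\wt}$, and the hard direction is a dyadic annulus decomposition, subadditivity, the shrink-the-ambient-ball monotonicity (in the right direction), a power inequality folded into the paper's $\simle$, and the substitution $\rho=r^{2^k}$. The only cosmetic difference is that the paper normalizes $\al=2^m$ so the decomposition of $B_r\setm B_{r^\al}$ is exact rather than a covering, while you choose $J$ with $2^{J+1}\ge\al$ and use closed-ball annuli $\clB_{r^{2^j}}\setm B_{r^{2^{j+1}}}$; both are valid and lead to the same estimate.
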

\begin{proof}
We already know that \eqref{eq-Wiener-al} is equivalent to \eqref{eq-Wiener-simpler} 
and thus to \eqref{eq-Wiener-clB}.
The integral in \eqref{eq-Wiener-clB} clearly majorizes the integral 
in \eqref{eq-int-with-r2} and so \eqref{eq-int-with-r2} implies \eqref{eq-Wiener-al}.

Conversely, we can without loss of generality assume that $\al=2^m$ for some integer
$m\ge1$, and hence $r^\al\le r^2$.
By the subadditivity and monotonicity of $\cp_{p,\wt}$, see
\cite[Theorem~2.2]{HKM}, we have for all $0<r<1$,
\begin{equation}   \label{eq-split-al-sum}
\cp_{p,\wt}(\Ft\cap(B_r\setm B_{r^\al}),B_{2r})
\le \sum_{k=0}^{m-1} \cp_{p,\wt}\bigl(\Ft\cap(B_{r^{2^k}}\setm 
    B_{(r^{2^k})^2}),B_{2r^{2^k}}\bigr).
\end{equation}
For each $k=0,\ldots,m-1$, the change of variables $\rho=r^{2^k}$ implies that
\begin{align*}
&\int_0^1 \cp_{p,\wt} \bigl( \Ft\cap \bigl( B_{r^{2^k}}\setm 
    B_{(r^{2^k})^2} \bigr),B_{2r^{2^k}} \bigr)^{1/(p-1)} \,\frac{dr}{r} \\
& \qquad \quad = 2^{-k}\int_0^1 \cp_{p,\wt}(\Ft\cap(B_\rho\setm B_{\rho^2}),B_{2\rho})^{1/(p-1)} \,\frac{d\rho}{\rho}.
\end{align*}
Together with \eqref{eq-split-al-sum}, this yields
\begin{align*}
&
\int_0^1 \cp_{p,\wt}\bigl(\Ft\cap\bigl(B_r\setm B_{r^\al}\bigr),B_{2r}\bigr)^{1/(p-1)} \,\frac{dr}{r} \\
&\qquad \quad \simle \sum_{k=0}^{m-1}\int_0^1\cp_{p,\wt}\bigl(\Ft\cap\bigl(B_{r^{2^k}}\setm 
    B_{(r^{2^k})^2}\bigr),B_{2r^{2^k}}\bigr)^{1/(p-1)}\,\frac{dr}{r}\\
    &\qquad \quad \simle\sum_{k=0}^{m-1}2^{-k}\int_0^1 
          \cp_{p,\wt}(\Ft\cap(B_\rho\setm B_{\rho^2}),B_{2\rho})^{1/(p-1)} \,\frac{d\rho}{\rho}\\
& \qquad \quad \le 2 \int_0^1 
    \cp_{p,\wt}(\Ft\cap(\clB_r\setm B_{r^2}),B_{2r})^{1/(p-1)} \,\frac{dr}{r},
\end{align*}
which shows that \eqref{eq-Wiener-al} implies \eqref{eq-int-with-r2}.
\end{proof}

Finally, we prove the following concrete Wiener criterion for the boundary 
regularity at $\infty$ for the mixed boundary value problem \eqref{eq-weak-sol-p-Lapl} 
in $G\setm F$, stated as \eqref{eq-Wiener-intro} in the introduction.

\begin{thm} \label{thm-reg-infty}
The point at $\infty$ is regular for the mixed boundary value problem 
\eqref{eq-weak-sol-p-Lapl}   
in $G\setm F$ if and only if the following condition holds
\begin{equation}		\label{eq-infty}
\int_1^\infty\cp_{p,G_{t-1}}(F\cap(\clG_t\setm G_{2t}))^{1/(p-1)}\,dt=\infty.
\end{equation}
\end{thm}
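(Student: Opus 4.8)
The plan is to identify \eqref{eq-infty} as the image, under the change of variables $T^{-1}$, of the Wiener criterion \eqref{eq-int-with-r2}. By Lemma~\ref{lem-reg-infty-0} the point at $\infty$ is regular for the mixed problem if and only if the origin is regular for \eqref{eq-dvg-B-in-D}; by the Wiener criterion of Heinonen--Kilpel\"ainen--Martio~\cite[Theorem~21.30]{HKM} and Mikkonen~\cite{M} this is equivalent to \eqref{eq-Wiener-crit}, hence to \eqref{eq-Wiener-simpler} by Lemma~\ref{lem-cap-B-2B}, and finally to \eqref{eq-int-with-r2} by Lemmas~\ref{lem-Wiener-al} and~\ref{lem-Bral-Br2}. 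It therefore suffices to show that \eqref{eq-int-with-r2} holds if and only if \eqref{eq-infty} holds.

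First I would replace the lower endpoint $r=1$ in \eqref{eq-int-with-r2} by $r=e^{-\ka}$. Since $\Ft\cap(\clB_r\setm B_{r^2})\subset\clB_r\subset B_{2r}$, the monotonicity of $\cp_{p,\wt}$ together with Lemma~\ref{lem-cap-B-2B} gives $\cp_{p,\wt}(\Ft\cap(\clB_r\setm B_{r^2}),B_{2r})\simle\cp_{p,\wt}(\clB_r,B_{2r})\simeq1$, so that
\[
\int_{e^{-\ka}}^1 \cp_{p,\wt}(\Ft\cap(\clB_r\setm B_{r^2}),B_{2r})^{1/(p-1)}\,\frac{dr}{r}\simle\int_{e^{-\ka}}^1\frac{dr}{r}=\ka<\infty,
\]
and hence \eqref{eq-int-with-r2} is equivalent to the divergence of the same integral taken over $(0,e^{-\ka})$.

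Next I would carry out the substitution $r=e^{-\ka t}$, for which $dr/r=-\ka\,dt$ and $r\in(0,e^{-\ka})$ corresponds to $t\in(1,\infty)$. Since $|\xi|=e^{-\ka x_n}$, the map $T$ sends the slab $\clG_t\setm G_{2t}=\clBprime\times[t,2t]$ bijectively onto $(\clB_r\setm B_{r^2})\cap\{\xi_n\ge0\}$ and $\Tt$ onto its lower reflection; as $P$ preserves $|\xi|$ and $0\notin\clB_r\setm B_{r^2}$, this yields the set identity
\[
\Ft\cap(\clB_r\setm B_{r^2})=T(E)\cup\Tt(E)=\Et,\qquad E:=F\cap(\clG_t\setm G_{2t}).
\]
The set $E$ is closed, hence Borel, and $E\subset\clG_t\subset G_{t-1}$ because $t\ge1$, so Lemma~\ref{lem-compare-cap}, applied with $t$ replaced by $t-1$ (so that its reference ball becomes $B_{e^{-\ka(t-1)}}$), gives $\cp_{p,\wt}(\Et,B_{e^{-\ka(t-1)}})\simeq\cp_{p,G_{t-1}}(E)$ with constants independent of $t$. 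Finally, the reference ball $B_{2e^{-\ka t}}$ produced by the substitution and the ball $B_{e^{-\ka(t-1)}}=B_{e^{\ka}e^{-\ka t}}$ have radii comparable to $e^{-\ka t}$ with the fixed ratios $2$ and $e^{\ka}$, while $\Et\subset\clB_{e^{-\ka t}}$; a routine comparison of variational capacities of spherical condensers, carried out as in the proof of Heinonen--Kilpel\"ainen--Martio~\cite[Lemma~2.16]{HKM}, then gives $\cp_{p,\wt}(\Et,B_{2e^{-\ka t}})\simeq\cp_{p,\wt}(\Et,B_{e^{-\ka(t-1)}})$, again uniformly in $t\ge1$. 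Combining these comparisons shows that the integral over $(0,e^{-\ka})$ from the previous paragraph and $\int_1^\infty\cp_{p,G_{t-1}}(F\cap(\clG_t\setm G_{2t}))^{1/(p-1)}\,dt$ diverge simultaneously, which completes the proof.

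I expect the main difficulty to lie in the third paragraph: verifying the set identity $\Ft\cap(\clB_r\setm B_{r^2})=\Et$ with careful attention to which spherical shells are included (closed versus open balls, the deleted origin, and the reflected copy), and ensuring that the capacity comparison needed to switch the reference ball from $B_{2e^{-\ka t}}$ to $B_{e^{-\ka(t-1)}}$ has constants that do not degenerate as $t\to\infty$ --- this is precisely why it is crucial that all the balls involved scale like $e^{-\ka t}$ with $t$-independent ratios, so that the comparison reduces to a single scale-invariant estimate.
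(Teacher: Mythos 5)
Your proposal follows essentially the same route as the paper's own proof: reduce to regularity of the origin via Lemma~\ref{lem-reg-infty-0}, pass through \eqref{eq-Wiener-crit}, \eqref{eq-Wiener-simpler}, and \eqref{eq-int-with-r2} via Lemma~\ref{lem-cap-B-2B} and Lemmas~\ref{lem-Wiener-al}--\ref{lem-Bral-Br2}, discard the range $r\in[e^{-\ka},1]$, substitute $r=e^{-\ka t}$, and invoke Lemma~\ref{lem-compare-cap} together with the standard outer-ball comparison from \cite[Lemma~2.16]{HKM}. The only difference is order of operations (you substitute before swapping $B_{2r}$ for $B_{e^\ka r}$, the paper does the reverse), which is immaterial; your verification of the set identity $\Ft\cap(\clB_r\setm B_{r^2})=\Et$ and of $E\subset G_{t-1}$ is correct.
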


\begin{proof}
Lemma~\ref{lem-reg-infty-0} guarantees that the point at $\infty$ is regular for 
\eqref{eq-weak-sol-p-Lapl}   
in $G\setm F$ if and only if the origin $0\in\bdy D$ is regular for \eqref{eq-dvg-B-in-D}
in $D$. 
Regular points for \eqref{eq-dvg-B-in-D} are characterized
by the Wiener criterion~\eqref{eq-Wiener-crit}.

By Lemma~\ref{lem-cap-B-2B}, the Wiener criterion at $0\in\bdy D$ reduces 
to~\eqref{eq-Wiener-simpler}.
Lemma~\ref{lem-Wiener-al} shows that \eqref{eq-Wiener-simpler} is  
for $\al>2^{1/(p-1)}$ equivalent to \eqref{eq-Wiener-al}, which is 
in turn equivalent to \eqref{eq-int-with-r2}, by Lemma~\ref{lem-Bral-Br2}. 
What now remains is to show that the integral in \eqref{eq-int-with-r2} 
diverges if and only if the one in \eqref{eq-infty} does.

As in \cite[Lemma~2.16]{HKM}, it can be shown that replacing the ball 
$B_{2r}$ in \eqref{eq-int-with-r2} by $B_{\la r}$, for any $\la>1$, results
in an integral that is comparable to the one in \eqref{eq-int-with-r2}.
The convergence of the integral is thus not influenced by the change to 
$B_{\la r}$. 
In particular, we can take $\la=e^{\ka}$.
Moreover, since by Lemma~\ref{lem-cap-B-2B},
\[
\int_{e^{-\ka}}^1 \cp_{p,\wt}(\Ft\cap(\clB_r\setm B_{r^2}),B_{e^{\ka}r})^{1/(p-1)} 
     \,\frac{dr}{r}\simle\int_{e^{-\ka}}^1\,\frac{dr}{r}=\ka<\infty,
\]
integrating only over $0<r\le e^{-\ka}$ does not play a role for
the convergence of the integral either. 
Thus the regularity of the origin $0\in\bdy D$ is equivalent to 
\begin{equation}  \label{eq-reg-infty}
\int_0^{e^{-\ka}} \cp_{p,\wt}(\Ft\cap(\clB_r\setm B_{r^2}),B_{e^{\ka}r})^{1/(p-1)} 
     \,\frac{dr}{r}=\infty.
\end{equation}

To finish the proof, apply the change of variables $r=e^{-\ka t}$ and the fact that the ball $B_r$, 
$0<r\le e^{-\ka}$, corresponds to the truncated cylinder
\[
G_t:=\{x\in\clG:x_n>t\},\quad \text{with } t=-\frac{1}{\ka}\log r \ge1.
\]
Also note that with this notation, $B_{r^2}$ and $B_{e^\ka r}$ correspond to $G_{2t}$ and $G_{t-1}$, 
respectively.
Comparing $\cp_{p,\wt}$ and $\cp_{p,G_{t-1}}$ using Lemma~\ref{lem-compare-cap}, 
it therefore follows that the regularity condition \eqref{eq-reg-infty} is equivalent to 
 \begin{equation*}
\int_1^\infty\cp_{p,G_{t-1}}(F\cap(\clG_t\setm G_{2t}))^{1/(p-1)}\,dt=\infty.\qedhere
\end{equation*} 
\end{proof}

\begin{remark} \label{rmk-sum}
The condition \eqref{eq-infty} is clearly equivalent to the condition
\[
\sum_{j=1}^\infty\cp_{p,G_{j-1}}(F\cap(\clG_j\setm G_{2j}))^{1/(p-1)}=\infty.
\]
\end{remark}

We end the paper with the following two examples illustrating when $\infty$ 
is irregular and regular for the mixed boundary value problem \eqref{eq-weak-sol-p-Lapl} 
in $G\setm F$.

\begin{example}
Assume that $1<p<n$.
For $i=1,2,\cdots,$ define $B^i:=B(z^i,2^{-i})$ with
fixed $z^i\in G_i\setm G_{i+1}$.
Let
\begin{equation}\label{eq-F-Bi}
F=\bigl(\clBprime \times\{0\}\bigr) \cup 
     \biggl(\bigcup_{i=1}^\infty  \clBi  \cap\clG\biggr).
\end{equation}
Then by Lemma~\ref{lem-cap-Cp}, we have for $j=1,2,\cdots$ that
\begin{equation} \label{eq-cap-clB}
\cp_{p,G_{j-1}}(F\cap(\clG_j\setm G_{2j}))
\simle C_p(F\cap(\clG_j\setm G_{2j}))
\le C_p\biggl(\bigcup_{i=j}^{2j-1}\clBi\biggr).
\end{equation}
Applying the subadditivity property of $C_p$ to the last expression 
in~\eqref{eq-cap-clB} and then using Corollary~2.41 from 
Heinonen--Kilpel\"ainen--Martio~\cite{HKM}, we get that
\[
\cp_{p,G_{j-1}}(F\cap(\clG_j\setm G_{2j}))
\simle\sum_{i=j}^{2j-1}C_p \bigl(\clBi\bigr)
\simeq\sum_{i=j}^{2j-1}2^{-i(n-p)}\simle 2^{-j(n-p)},
\]
with comparison constants depending only on $n$ and $p$.
Thus,
\[
\sum_{j=1}^\infty\cp_{p,G_{j-1}}(F\cap(\clG_j\setm G_{2j}))^{1/(p-1)}
  \simle\sum_{j=1}^\infty 2^{-j(n-p)/(p-1)}   
<\infty.
\]
Theorem~\ref{thm-reg-infty} and Remark~\ref{rmk-sum} therefore imply 
that $\infty$ is irregular for the mixed boundary value problem 
\eqref{eq-weak-sol-p-Lapl}    
in $G\setm F$ with $F$ defined as in \eqref{eq-F-Bi}.
\end{example}

\begin{example}
Let $p>1$ and
\begin{equation}\label{eq-F-Kj}
F=\bigl(\clBprime\times\{0\}\bigr) \cup  \bigcup_{j=1}^\infty K_j,
\end{equation}
 where $K_j\subset \clG_j\setm G_{j+1}$ are closed sets such that 
\[
C_p(K_j)\ge\de>0,\quad j=1,2,\cdots.
\]
Then by Lemma~\ref{lem-Cp-cap},
\[
\sum_{j=1}^\infty\cp_{p,G_{j-1}}(F\cap(\clG_j\setm G_{2j}))^{1/(p-1)}
\simge \sum_{j=1}^\infty C_p(K_j)^{1/(p-1)}=\infty.
\] 
Theorem~\ref{thm-reg-infty} and Remark~\ref{rmk-sum} show that $\infty$ 
is regular for the mixed boundary value problem \eqref{eq-weak-sol-p-Lapl}    
in $G\setm F$ with $F$ defined as in \eqref{eq-F-Kj}.

A particular example of this situation is $K_j=E\times[j,j+1]$,
where $E\subset\clBprime$ is any nonempty closed set 
of Hausdorff dimension 
\[
\dim_H(E)>n-1-p.
\]
Then $\dim_H(E\times[0,1])>n-p$, which implies that 
\[
C_p(K_j)=C_p(E\times[0,1])>0,
\] 
by e.g.\ Ziemer~\cite[Remark~2.6.15 and Theorem~2.6.16]{Zie}.
If $p>n$ then singletons have positive capacity and it is thus
sufficient for regularity at infinity that $F$ is unbounded.
\end{example}


\begin{thebibliography}{99}
\bibitem{JB} \art{\auth{Bj\"orn}{J}}
        {Regularity at infinity for a mixed problem for degenerate elliptic operators in a half-cylinder}
        {Math. Scand.} {81}{1997}{101--126}

\bibitem{choquet} \art{\auth{Choquet}{G}}
        {Forme abstraite du t\'eor\`eme de capacitabilit\'e}
        {Ann. Inst. Fourier\/ \textup{(}Grenoble\/\textup{)}}{9}{1959}{83--89} 
  

\bibitem{FJK} \art{\auth{Fabes}{E. B}, \auth{Jerison}{D. S} \AND \auth{Kenig}{C. E}}
        {The Wiener test for degenerate elliptic equations}
        {Ann. Inst. Fourier\/ \textup{(}Grenoble\/\textup{)}}{32} {1982}{151--182}	  

\bibitem{GZ} \art{\auth{Gariepy}{R. F}
        \AND \auth{Ziemer}{W. P}}
        {A regularity condition at the boundary for solutions of
          quasilinear elliptic equations}
        {Arch. Ration. Mech. Anal.}{67}{1977}{25--39}


        
\bibitem{HJ} \book{\auth{Heinonen}{J}}
        {Lectures on Lipschitz Analysis}
        {University of Jy\"askyl\"a, Finland, 2005}  
               

\bibitem{HKM} \book{\auth{Heinonen}{J}, \auth{Kilpel\"ainen}{T}
	\AND \auth{Martio}{O}}
        {Nonlinear Potential Theory of Degenerate Elliptic Equations}
        {2nd ed., Dover, Mineola, NY, 2006}
	      
\bibitem{horm} \book{\auth{H\"ormander}{L}}
        {The analysis of linear partial differential operators. I.
        Distribution theory and Fourier analysis}
        {2nd ed., Springer, Berlin, 1990}
       	

\bibitem{KMV} \art{\auth{Kerimov}{T. M}, \auth{Maz\cprime ya}{V.G} \AND \auth{Novruzov}{A. A}}
        {Criterion of regularity of the infinitely distant point for the Zaremba problem in a half-cylinder}
        {Z. Anal. Anwend.}{7}{1987}{113--125 (in Russian)}	


\bibitem{TK94} \art{\auth{Kilpel\"ainen}{T}}
        {Weighted Sobolev spaces and capacity}
        {Ann. Acad. Sci. Fenn. Ser. A I Math.} {19} {1994} {95--113}



\bibitem{TK-JM} \art{\auth{Kilpel\"ainen}{T} \AND \auth{Mal\'y}{J}}
        {The Wiener test and potential estimates for quasilinear elliptic equations}
        {Acta Math.} {172}{1994}{137--161} 


\bibitem{KiMaNov} \artin{\auth{Kinnunen}{J} \AND \auth{Martio}{O}}
        {Choquet property for the Sobolev capacity in metric spaces}
        {{\it Proceedings on Analysis and Geometry\/
        \textup{(}Novosibirsk\textup{,} Akademgorodok\textup{,} 1999\/\textup{)}},
        pp. 285--290, Sobolev Institute Press, Novosibirsk, 2000}
        
\bibitem{Les} \artin{\auth{Lebesgue}{H}}
        {Sur des cas d'impossibilit\'e du probl\`eme de Dirichlet  ordinaire}
        {{Vie de la soci\'et\'e
	(in the part C. R. S\'eances Soc. Math. France \textup{(}1912\textup{)})},
         p. 17, \emph{Bull. Soc. Math. France} {\bf 41} (1913), 1--62 (suppl\'ement \'especial)}


\bibitem{LM} \art{\auth{Lindqvist}{P} \AND
        \auth{Martio}{O}}
        {Two theorems of N. Wiener for solutions of quasilinear
          elliptic equations}
        {Acta Math.}{155}{1985}{153--171}


\bibitem{LSW} \art{\auth{Littman}{W}, \auth{Stampacchia}{G}
        \AND \auth{Weinberger}{H. F}}
        {Regular points for elliptic equations with discontinuous coefficients}
        {Ann. Sc. Norm. Super. Pisa Cl. Sci.}
	{17}{1963}{43--77}

\bibitem{VGM}  \art{\auth{Maz{\cprime}ya}{V. G}}
        {On the continuity at a boundary point of solutions of quasi-linear
        elliptic equations}
        {Vestnik Leningrad. Univ. Mat. Mekh. Astronom.}
        {25{\rm:13}} {1970} {42--55}  (Russian).
        English transl.: {\it Vestnik Leningrad Univ. Math.}
        {\bf 3} (1976), 225--242.


\bibitem{MazyaSob} \book{\auth{Maz\cprime ya}{V}}
        {Sobolev spaces}
        {2nd ed., Springer, Heidelberg, 2011}

\bibitem{M} \book{\auth{Mikkonen}{P}}
        {On the Wolff Potential and Quasilinear Elliptic
        Equations Involving Measures}
        {{Ann. Acad. Sci. Fenn. Math. Dissertationes} {\bf 104} (1996)} 
        
\bibitem{Wie} \art{\auth{Wiener}{N}}
        {Certain notions in potential theory}
        {J. Math. Phys.}{3}{1924}{24--51}

\bibitem{ZarProb} \art{\auth{Zaremba}{S}}
         {Sur un Probl\`eme mixte relatif \`a l'\`equation de Laplace}
         {Bull. Int. Acad. Polon. Sci. Cracovie. Lett. Cl. Sci. Math. Nat. S\'er. A. Sci. Math.}{\unskip}
         {1910}{313--344}

\bibitem{zar} \art{\auth{Zaremba}{S}}
        {Sur le principe de Dirichlet}
        {Acta Math.} {34} {1911} {293--316}


\bibitem{Zie} \book{\auth{Ziemer}{W. P}}
         {Weakly Differentiable Functions}
         {Springer, New York, 1989}

\end{thebibliography}
\end{document}